\documentclass[11pt,twoside, leqno]{article}

\usepackage{mathrsfs}
\usepackage{amssymb}
\usepackage{amsmath}
\usepackage{amsthm}
\allowdisplaybreaks

\pagestyle{myheadings}\markboth{Renjin Jiang and  Dachun Yang}
{Predual Spaces of Banach Completions of Orlicz-Hardy Spaces}

\textwidth=15cm
\textheight=21cm
\oddsidemargin 0.2cm
\evensidemargin 0.2cm

\parindent=15pt

\def\rr{{\mathbb R}}
\def\rn{{{\rr}^n}}
\def\rnz{{\rr}^{n+1}_+}

\def\rq{{\rr}^{n+1}_+}
\def\zz{{\mathbb Z}}

\def\cc{{\mathbb C}}
\def\cn{{\mathbb N}}
\def\cs{{\mathcal S}}

\def\cl{{\mathfrak{L}}}

\def\cp{{{\mathcal P}}}
\def\cps{{{\mathcal P}_s(\rn)}}
\def\cm{{\mathcal M}}
\def\car{{\mathcal R}}

\def\ca{{\mathcal A}}
\def\ccc{{\mathcal C}}

\def\fz{\infty}
\def\az{\alpha}
\def\supp{{\mathop\mathrm{\,supp\,}}}

\def\loc{{\mathop\mathrm{\,loc\,}}}

\def\lz{\lambda}
\def\dz{\delta}

\def\ez{\epsilon}

\def\bz{\beta}
\def\ro{\rho}

\def\gz{{\gamma}}
\def\oz{{\omega}}

\def\tz{\theta}
\def\sz{\sigma}

\def\wz{\widetilde}
\def\rz{\rightarrow}

\def\hs{\hspace{0.3cm}}

\def\ls{\lesssim}
\def\gs{\gtrsim}

\def\fss{\frac nm(\frac{1}{\wz p_0(\oz)}-1)}

\def\bbmo{{\mathrm{BMO}}}
\def\vmo{{\mathrm{VMO}}_{\ro,L}(\rn)}

\def\vvmo{{\mathrm{VMO}^s_{\ro,L}(\rn)}}
\def\bmor{{\mathrm{BMO}_{\ro,L}(\rn)}}
\def\vmor{{\mathrm{VMO}^{s_0}_{\ro,\,L}(\rn)}}
\def\bmoz{{\mathrm{BMO}_{\ro,L^\ast}(\rn)}}
\def\vmoz{{\mathrm{VMO}_{\ro,L^\ast}(\rn)}}

\def\bmoq{{\mathrm{BMO}^{q,s}_{\ro,L}(\rn)}}

\def\hw{{H_{\oz,L}(\rn)}}

\def\hw{{H_{\oz,L}(\rn)}}

\def\hv{{B^{s,s_1}_{\oz,L}(\rn)}}
\def\hz{{H_{\oz,\,L^\ast}(\rn)}}
\def\twl{{T_{\oz}({\rr}^{n+1}_+)}}
\def\twc{{ T^{\fz}_{\oz,c}({\rr}^{n+1}_+)}}
\def\twz{{ T^{\fz}_{\oz}({\rr}^{n+1}_+)}}
\def\tw0{{ T^{\fz}_{\oz,0}({\rr}^{n+1}_+)}}
\def\twv{{T^{\fz}_{\oz,\mathrm v}({\rr}^{n+1}_+)}}
\def\wtw{{\widetilde{T}_{\oz}({\rr}^{n+1}_+)}}

\def\com{\complement}
\def\r{\right}
\def\lf{\left}
\def\lfr{\lfloor}
\def\rf{\rfloor}
\def\la{\langle}
\def\ra{\rangle}

\newtheorem{thm}{Theorem}[section]
\newtheorem{lem}{Lemma}[section]
\newtheorem{prop}{Proposition}[section]
\newtheorem{rem}{Remark}[section]
\newtheorem{cor}{Corollary}[section]
\newtheorem{defn}{Definition}[section]

\numberwithin{equation}{section}

\begin{document}

\arraycolsep=1pt

\title{{\vspace{-5cm}\small\hfill\bf J. Fourier Anal. Appl., to appear}\\
\vspace{4cm}\bf Predual Spaces of Banach Completions of Orlicz-Hardy Spaces
Associated with Operators\footnotetext{\hspace{-0.35cm} 2000 {\it Mathematics
Subject Classification}. Primary 42B35; Secondary 42B30.
\endgraf{\it Key words and phrases.} operator, Orlicz function, Orlicz-Hardy space,
VMO, predual space, Banach completion, tent space, molecule.
\endgraf
Dachun Yang is supported by the National
Natural Science Foundation (Grant No. 10871025) of China.
\endgraf $^\ast\,$Corresponding author.}}
\author{Renjin Jiang and Dachun Yang$\,^\ast$}
\date{ }
\maketitle

\begin{center}
\begin{minipage}{13.5cm}\small
{\noindent{\bf Abstract.} Let $L$ be a linear operator in
$L^2({{\mathbb R}^n})$ and generate an analytic semigroup
$\{e^{-tL}\}_{t\ge 0}$ with kernels satisfying an upper bound of
Poisson type, whose decay is measured by $\theta(L)\in (0,\infty].$
Let $\omega$ on $(0,\infty)$ be of upper type $1$ and of critical
lower type $\widetilde p_0(\omega)\in (n/(n+\theta(L)), 1]$ and
$\rho(t)={t^{-1}}/\omega^{-1}(t^{-1})$ for $t\in (0,\infty)$. In
this paper, the authors first introduce the VMO-type space
$\mathrm{VMO}_{\rho,L}({\mathbb R}^n)$ and the tent space
$T^{\infty}_{\omega,\mathrm v}({\mathbb R}^{n+1}_+)$ and
characterize the space $\mathrm{VMO}_{\rho,L}({\mathbb R}^n)$ via
the space $T^{\infty}_{\omega,\mathrm v}({{\mathbb R}}^{n+1}_+)$.
Let $\widetilde{T}_{\omega} ({{\mathbb R}}^{n+1}_+)$ be the Banach
completion of the tent space $T_{\omega}({\mathbb R}^{n+1}_+)$. The
authors then prove that $\widetilde{T}_{\omega}({\mathbb
R}^{n+1}_+)$ is the dual space of $T^{\infty}_{\omega,\mathrm
v}({\mathbb R}^{n+1}_+)$. As an application of this, the authors
finally show that the dual space of
$\mathrm{VMO}_{\rho,L^\ast}({\mathbb R}^n)$ is the space
$B_{\omega,L}({\mathbb R}^n)$, where $L^\ast$ denotes the adjoint
operator of $L$ in $L^2({\mathbb R}^n)$ and $B_{\omega,L}({\mathbb
R}^n)$ the Banach completion of the Orlicz-Hardy space
$H_{\omega,L}({\mathbb R}^n)$. These results generalize the known
recent results by particularly taking $\omega(t)=t$ for $t\in (0,\infty)$.}
\end{minipage}
\end{center}

\vspace{0cm}

\section{Introduction\label{s1}}

\hskip\parindent The space $\mathrm{VMO}(\rn)$
(the space of functions with vanishing mean
oscillation) was first studied by Sarason \cite{sa}.
Coifman and Weiss \cite{cw} introduced the space $\mathrm{CMO}(\rn)$
which is defined to be the closure in the
BMO norm of the space of continuous functions with
compact support, and moreover, proved that the space $\mathrm{CMO}(\rn)$ is
the predual of the Hardy space $H^1(\rn)$. When $p<1$,
Janson \cite{ja} introduced the space $\lz_{n(1/p-1)}(\rn)$ which
is defined to be the closure of the space of Schwartz functions
in the norm of the Lipschitz space $\Lambda_{n(1/p-1)}(\rn)$, and proved that
$(\lz_{n(1/p-1)}(\rn))^\ast=B^p(\rn)$, where $B^p(\rn)$ is the Banach
completion of the Hardy space $H^p(\rn)$; see also \cite{pe,wws} for more
properties about the space $\lz_{n(1/p-1)}(\rn)$.

In recent years, the study of function spaces associated with
operators has inspired great interests; see, for example,
\cite{adm2,amr,ar,ddsty, ddy,dxy,dy1,dy2,hm1,ya1,ya2} and their references. Let $L$ be
a linear operator in $L^2(\rn)$ and generate an analytic semigroup
$\{e^{-tL}\}_{t\ge 0}$ with kernels satisfying an upper bound of
Poisson type, whose decay is measured by $\tz(L)\in (0,\fz].$
Auscher, Duong and McIntosh \cite{adm2} introduced the Hardy space
$H^1_L(\rn)$ by using the Lusin-area function and established its
molecular characterization. Duong and Yan \cite{dy1, dy3}, and Duong,
Xiao and Yan \cite{dxy} introduced and studied some $\mathrm{BMO}$ spaces and
Morrey-Campanato spaces associated with operators. Duong
and Yan \cite{dy2} further proved that the dual space of the Hardy space
$H^1_L(\rn)$ is the space $\bbmo_{L^\ast}(\rn)$ introduced in \cite{dy1},
where $L^\ast$ denotes the adjoint operator of $L$ in $L^2(\rn).$ Yan \cite{ya2}
generalized all these results to the Hardy spaces
$H^p_L(\rn)$ with $p\in (n/(n+\tz(L)), 1]$ and their dual spaces.
Moreover, recently, Deng, Duong et al in \cite{ddsty}
introduced the space $\mathrm{VMO}_L(\rn)$ and proved that
$(\mathrm{VMO}_{L^\ast}(\rn))^\ast=H_L^1(\rn)$.

On the other hand, the Orlicz-Hardy space was studied by Janson \cite{ja} and
Viviani \cite{v}. Let $\oz$ on $(0,\fz)$ be of upper type $1$ and of
critical lower type $\wz p_0(\oz)\in (n/(n+\tz(L)), 1]$ and
$\rho(t)\equiv{t^{-1}}/\oz^{-1}(t^{-1})$ for $t\in (0,\fz).$
The Orlicz-Hardy space $\hw$ and its dual space $\bmoz$
associated with the aforementioned operator $L$ and its dual operator $L^\ast$
in $L^2(\rn)$ were introduced in \cite{jyz}. If $\oz(t)=t^p$ for all $t\in (0,\fz)$,
then $\hw=H^p_L(\rn)$ and $\bmoz$ becomes $\mathrm{BMO}_{L^\ast}(\rn)$ when $p=1$
or the Morrey-Campanato space (see \cite{dy3}) when $p<1$.
The main purpose of this paper
is to study the predual space of the Banach completion of the
Orlicz-Hardy space $\hw$.

In fact, in this paper, we first introduce the VMO-type space
$\mathrm{VMO}_{\ro,\,L}(\rn)$ and the tent space
$T^{\fz}_{\oz,\mathrm v}({\rr}^{n+1}_+)$ and characterize the space
$\mathrm{VMO}_{\ro,\,L}(\rn)$ via the space $T^{\fz}_{\oz,\mathrm
v}({\rr}^{n+1}_+)$. Let $\widetilde{T}_{\oz}({\rr}^{n+1}_+)$ be the
Banach completion of the tent space $T_{\oz}({\rr}^{n+1}_+)$. We
then prove that $\widetilde{T}_{\oz}({\rr}^{n+1}_+)$ is the dual
space of $T^{\fz}_{\oz,\mathrm v}({\rr}^{n+1}_+)$. As an application
of this, we finally show that the dual space of
$\mathrm{VMO}_{\ro,\,L^\ast}(\rn)$ is the space $B_{\oz,\,L}(\rn)$,
where $L^\ast$ denotes the adjoint operator of $L$ in $L^2({\mathbb
R}^n)$ and $B_{\oz,\,L}(\rn)$ the Banach completion of the
Orlicz-Hardy space $H_{\oz,\,L}(\rn)$. In particular, if $p\in (0, 1]$
and $\oz(t)=t^p$ for all $t\in (0,\fz)$,
we obtain the predual space of the Banach completion of the
Hardy space $H_L^p(\rn)$ in \cite{ya2}, and if $p=1$, we
re-obtain that $(\mathrm{VMO}_{L^\ast}(\rn))^\ast=H_L^1(\rn)$,
which is the main result in \cite{ddsty}.
Moreover, we prove that if $L=\Delta$, $p\in (0,1]$ and
$\omega(t)=t^p$ for all $t\in (0,\fz)$,
the space $\mathrm{VMO}_{\rho,L}({\mathbb R}^n)$
coincides with the space $\lambda_{n(1/p-1)}({\mathbb R}^n)$ in \cite{ja}
(see also \cite{pe, wws}), where $\Delta=-\sum^n_{i=1}\frac {\partial^2}{\partial x_i^2}$
is the Laplace operator on $\rn$.

Precisely, this paper is organized as follows. In Section \ref{s2}, we
recall some known definitions and notation concerning aforementioned
operators, Orlicz functions, the Orlicz-Hardy spaces and BMO spaces associated
with these operators and describe some basic assumptions on the operator $L$
and the Orlicz function $\oz$ considered in this paper. We remark that
there exist many operators satisfying these assumptions
(see \cite{ddsty,dxy, dy2, ya2, jyz} for examples
of such operators). Also, if $p\in (0,1]$, then $\oz(t)=t^p$ for
all $t\in (0,\fz)$ is a typical example of Orlicz functions
satisfying our assumptions; see \cite{jyz} for some
other examples.

In Section \ref{s3}, we introduce the spaces $\vmo$ and $\twv$ and
give some basic properties of these spaces. In particular, we
characterize the space $\mathrm{VMO}_{\ro,\,L}(\rn)$ via the space
$T^{\fz}_{\oz,\mathrm v}({\rr}^{n+1}_+)$; see Theorem \ref{t3.2}
below. As an application of Theorem \ref{t3.2} together with a characterization
of the space $\lz_{n(1/p-1)}(\rn)$ in \cite{wws}, we obtain that
when $L=\Delta$, $p\in (0,1]$ and $\oz(t)=t^p$ for all $t\in (0,\fz)$, the space
$\vmo$ coincides with the space $\lz_{n(1/p-1)}(\rn)$; see Corollary \ref{c3.1} below.

In Section \ref{s4}, we introduce the space $\wtw$,
which is defined to be the Banach completion of the tent
space $\twl$ (see Definition \ref{dp4.1} below),
and prove that $\wtw$ is the dual space of $\twv$;
see Theorem \ref{t4.2} below. If $p\in (0,1]$ and
$\oz(t)=t^p$ for all $t\in (0,\fz)$,
then the tent space $\twl=T_2^p(\rnz)$ and
its predual space was proved to be the corresponding
space $\twv$ by Wang in \cite{wws}, which when $p=1$
plays a key role in \cite{ddsty}.
To prove that $\wtw$ is the dual space of $\twv$, different
from the approach used in \cite{cw} and \cite{wws} which strongly
depends on an unfamiliar result (Exercise 41 on \cite[p.\,439]{ds})
from the functional analysis, we only use the basic fact that the dual space
of $L^2$ is itself. Indeed, using this fact, for any $\ell \in (\twv)^\ast$,
we construct $g\in \wtw$ such that for all $f\in \twv$,
$$\ell(f)=\int_{\rnz}f(x,t)g(x,t)\frac{\,dx\,dt}{t};$$
see Theorem \ref{t4.2} below. As an application of Theorem \ref{t4.2}, we
further prove that the dual space of the space
$\vmoz$ is the space $B_{\oz,L}(\rn)$, where
the space $B_{\oz,L}(\rn)$ is the Banach completion of $\hw$; see
Definition \ref{dp4.3} and Theorem
\ref{t4.4} below. Since all dual spaces are complete, it is necessary here to
replace the Orlicz-Hardy space by its Banach completion, which is different
from \cite{ddsty}. In \cite{ddsty}, the Banach completion
of the Hardy space $H_L^1(\rn)$ is just itself.
Finally, in Subsection \ref{s4.3}, we give several examples
of operators to which the results of this paper are
applicable.

Let us make some conventions. Throughout the paper, we denote
by $C$ a positive constant which is independent of the main
parameters, but it may vary from line to line. The symbol $X \ls Y$
means that there exists a positive constant $C$ such that $X \le
CY$; the symbol $\lfr\az\rf$ for $\az\in\rr$ denotes the maximal
integer no more than $\az$; $B\equiv B(z_B,\,r_B)$ denotes an open ball with
center $z_B$ and radius $r_B$ and $CB(z_B,\,r_B)\equiv
B(z_B,\,Cr_B).$ Set $\cn\equiv\{1,2,\cdots\}$ and
$\zz_+\equiv\cn\cup\{0\}.$ For any subset $E$ of $\rn$, we denote by
$E^\com$ the set $\rn\setminus E.$

\section{Preliminaries\label{s2}}

\hskip\parindent In this section, we first
describe some basic assumptions on the operators $L$ and
Orlicz functions studied in this paper (see, for
example, \cite{dm, m, ja, v, dxy,
dy1, dy2, ddsty, jyz}), and we then recall some notions about
the Orlicz-Hardy space $H_{\oz,L}(\rn)$ and the BMO-type space
$\bmor$ in \cite{jyz}.

\subsection{Two assumptions on the operator $L$\label{s2.1}}

\hskip\parindent Let $\nu\in(0,\pi)$, $S_\nu\equiv\{z\in
\cc:\, |\arg(z)|\le \nu\}\cup \{0\}$ and $S^0_\nu$ the
interior of $S_\nu$, where
$\arg(z)\in(-\pi,\,\pi]$ is the argument of $z$.
Assume that $L$  is a linear operator such that
$\sigma(L)\subset S_\nu$, where $\sz(L)$
denotes the spectra of $L$ and $\nu\in (0,\pi/2)$, and that for all $\gz >\nu$,
there exists a positive constant $C_\gz$ such that
$$\|(L-\lz I)^{-1}\|_{L^2(\rn)\to L^2(\rn)}\le C_\gz|\lz|^{-1}, \ \ \
\forall \ \lz \notin S_\gz,$$
where and in what follows, for any two normed linear spaces
$\mathscr{X}$ and $\mathscr{Y}$, and any bounded linear operator
$T$ from $\mathscr{X}$ to $\mathscr{Y},$ we use
$\|T\|_{\mathscr{X}\to\mathscr{Y}}$ to denote the operator norm of $T$
from $\mathscr{X}$ to $\mathscr{Y}$ and $\cl({\mathscr{X},\mathscr{Y}})$
the set of all bounded linear operators from $\mathscr{X}$ to
$\mathscr{Y}$. Hence $L$ generates a holomorphic semigroup
$e^{-zL}$, where $0\le |\arg(z)| <\frac{\pi}{2}-\nu$ (see \cite{m}).
We make the following two assumptions on $L$ (see \cite{dy2,
dxy, ddsty, ya2,jyz}).

\begin{proof}[\bf Assumption (a)]\rm Assume that for all $t > 0$, the
distribution kernels $p_t$ of $e^{-tL}$ belong to
$L^\fz(\rn\times\rn)$ and satisfy the estimate $|p_t(x,y)|\le h_t(x,y)$
for all $x,\, y\in \rn$, where $h_t$ is given by
\begin{equation}\label{2.1}
h_t(x,\,y)=t^{-\frac{n}{m}}g\lf(\frac{|x-y|}{t^{\frac{1}{m}}}\r),
\end{equation}
in which $m$ is a positive constant and $g$ is a positive, bounded,
decreasing function satisfying that
\begin{equation}\label{2.2}
\lim_{r\to \fz}r^{n+\ez}g(r)=0
\end{equation}
for some $\ez > 0$.
\end{proof}

Let $H(S^0_\nu )$ be the space of all holomorphic
functions on $S^0_\nu$ and
$$H_\fz(S^0_\gz)\equiv \lf\{b\in H(S^0_\gz):\,\|b\|_\fz\equiv
\sup_{z\in S^0_\gz}|b(z)|<\fz\r\}.$$
Recall that the operator $L$ is said to have a bounded
$H_\fz$-calculus in $L^2(\rn)$ (see \cite{m}) provided that for all
$\gz\in(\nu,\,\pi)$, there exists a positive constant $\wz C_\gz$
such that for all $b\in H_\fz(S^0_\gz)$, $b(L)\in\cl(L^2(\rn),\,
L^2(\rn))$ and
$\|b(L)\|_{L^2(\rn)\to L^2(\rn)}\le\wz C_\gz\|b\|_{\fz}$, where
$\psi(z)=z(1+z)^{-2}$ for all $z\in S^0_\gz$
and $b(L)\equiv [\psi(L)]^{-1}(b\psi)(L)$. It was proved in \cite{m} that
$b(L)$ is a well-defined linear operator in $L^2(\rn)$.

\begin{proof}[\bf Assumption (b)]\rm Assume that the
operator $L$ is one-to-one, has dense range in $L^2(\rn)$ and a
bounded $H_\fz$-calculus in $L^2(\rn).$
\end{proof}

From the assumptions (a) and (b), it is easy to deduce the
following useful estimates.

First, if $\{e^{-tL}\}_{t\ge 0}$ is a bounded analytic semigroup in
$L^2(\rn)$ whose kernels $\{p_t\}_{t\ge0}$ satisfy the estimates
\eqref{2.1} and \eqref{2.2}, then for any $k \in \cn$, there exists
a positive constant $C$ such that the time derivatives of $p_t$
satisfy that
\begin{equation} \label{2.3} \lf|t^k\frac{\partial^k
p_t(x,y)}{\partial t^k}\r| \le
\frac{C}{t^{\frac{n}{m}}}g\lf(\frac{|x-y|}{t^\frac{1}{m}}\r) \ \
\end{equation}
for all  $t>0$ and  almost everywhere $x,\,y\in \rn.$ It should be
pointed out that for any $k\in \cn$, the function $g$ may depend on
$k$ but it always satisfies $\eqref{2.2}$; see Theorem 6.17 of
\cite{ou} and \cite{cd}, and also \cite{dy2,
dxy, ddsty, ya2,jyz}.

Secondly, let
$$\Psi(S^0_\nu)\equiv\bigg\{\psi \in H(S^0_\nu):\,
 \exists\, s, C>0\ \mathrm{such \ that}\ \forall z\in S^0_\nu,
\,|\psi(z)|\le C|z|^s(1+|z|^{2s})^{-1} \bigg\}.$$
It is well known that $L$ has a bounded
$H_\fz$-calculus in $L^2(\rn)$ if and only if for all
$\gz\in(\nu,\,\pi]$ and any non-zero function $\psi \in
\Psi(S^0_\gz)$,  $L$ satisfies the square function estimate and its
reverse, namely, there exists a positive constant $C$ such that for
all $f\in L^2(\rn)$,
\begin{equation}\label{2.4}
C^{-1}\|f\|_{L^2(\rn)}\le
\lf(\int^\fz_0\|\psi_t(L)f\|^2_{L^2(\rn)}\frac{\,dt}{t}\r)^{1/2}\le
C\|f\|_{L^2(\rn)},
\end{equation}
where $\psi_t(\xi)=\psi(t\xi)$ for all $t>0$ and $\xi\in \rn.$
Notice that different choices of $\gz >\nu$  and $\psi\in
\Psi(S^0_\gz)$ lead to equivalent quadratic norms of $f$; see
\cite{m} for the details.

As noticed in \cite{m}, positive self-adjoint operators satisfy the
quadratic estimate \eqref{2.4}. So do normal operators with spectra
in a sector, and maximal accretive operators. For definitions of
these classes of operators, we refer the reader to \cite{yo}.

\subsection{An acting class of the
semigroup $\{e^{-tL}\}_{t\ge0}$\label{s2.2}}

\hskip\parindent Duong and Yan \cite{dy1} introduced the class of functions that
the operators $e^{-tL}$ act upon. Precisely, for any $\bz> 0$, let
$\cm_\bz(\rn)$ be the collection of all functions $f\in
L^2_{\loc}(\rn)$ such that
\begin{equation*}
\|f\|_{\cm_\bz(\rn)}\equiv\lf(\int_{\rn}\frac{|f(x)|^2}{1+|x|^{n+\bz}}\,dx\r)^{1/2}
<\fz.
\end{equation*}
Then $\cm_\bz(\rn)$ is a Banach space under the norm
$\|\cdot\|_{\cm_\bz(\rn)}.$ For any given operator $L$, set
\begin{equation}\label{2.5}
\tz(L)\equiv\sup\{\ez> 0:\  (\ref{2.2}) \ \rm  holds\}
\end{equation}
and define
\begin{equation*}
\begin{array}[C]{l}
\cm(\rn)\equiv{\lf\{
\begin{array}{ll}
\quad\cm_{\tz(L)}(\rn)\quad\quad &\quad\mbox{if}\quad\tz(L)<\fz;\\
\displaystyle\mathop\cup_{0<\bz<\fz}\cm_\bz(\rn) &\quad{\rm if }
\quad \tz(L)=\fz.
\end{array}
\r.}
\end{array}
\end{equation*}

Let $s\in \zz_+.$ For any $(x,\,t)\in
{\rr}^{n+1}_+\equiv\rn\times(0,\,\fz)$ and $f\in \cm(\rn)$, set
\begin{equation}\label{2.6} P_{s,\,t}f(x)\equiv
f(x)-(I-e^{-tL})^{s+1}f(x) \quad {\rm and} \quad Q_{s,\,t}f(x)\equiv
t^{s+1}L^{s+1}e^{-tL}f(x).
\end{equation}
If $s = 0$, write
\begin{equation}\label{2.7}
P_{t}f(x)\equiv P_{0,\,t}f(x)=e^{-tL}f(x) \quad {\rm and} \quad
Q_{t}f(x)\equiv Q_{0,\,t}f(x)=tLe^{-tL}f(x).
\end{equation}
For any $f\in\cm(\rn)$, by \eqref{2.3}, it is easy to show that
$P_{s,\,t}f$ and $Q_{s,\,t}f$ are well defined. Moreover, by
\eqref{2.3} again, we know that the kernels $p_{s,\,t}$ and $q_{s,\,t}$ of
$P_{s,\,t}$ of $Q_{s,\,t}$ satisfy that for all $t>0$ and $x,\,y\in \rn$,
\begin{equation}\label{2.8}
|p_{s,\,t^m}(x,y)|+|q_{s,\,t^m}(x,y)|\le C_st^{-n}g\lf(\frac{|x-y|}{t}\r),
\end{equation}
where the function $g$ satisfies the condition \eqref{2.2} and $C_s$ is
a positive constant independent of $t$, $x$ and $y$.

It should be pointed out that these operators in \eqref{2.6} were
introduced by Blunck and Kunstmann \cite{bk}.

\subsection{Orlicz functions \label{s2.3}}

\hskip\parindent Let $\omega$ be a positive function defined on
$\rr_+\equiv(0,\,\fz).$ The function $\omega$ is said to be of upper
type $p$ (resp. lower type $p$) for some $p\in[0,\,\fz)$, if there
exists a positive constant $C$ such that for all $t\geq 1$ (resp.
$0<t\le1$),
\begin{equation}\label{2.9}
\omega(st)\le Ct^p \omega(s).
\end{equation}

Obviously, if $\oz$ is of lower type $p$ for some $p>0$, then
$\lim_{t\to0^+}\oz(t)=0.$ So for the sake of convenience, if it is
necessary, we may assume that $\oz(0)=0.$ If $\oz$ is of both upper
type $p_1$ and lower type $p_0$, then $\oz$ is said to be of type
$(p_0,\,p_1).$ Let
\begin{equation*}
\wz p_1(\oz)\equiv\inf\{ p>0: (\ref{2.9}) \ \mathrm{holds\ for\
all}\ t\in(1,\fz)\},
\end{equation*}
and
\begin{equation*}
\wz p_0(\oz)\equiv\sup\{ p>0: (\ref{2.9}) \ \mathrm{holds\ for\
all}\ t\in(0,1)\}.\end{equation*}
It is easy to see that $\wz
p_0(\oz)\le\wz p_1(\oz)$ for all $\oz.$ In what follows, $\wz
p_0(\oz)$ and $\wz p_1(\oz)$ are called the critical lower
type index and the critical upper type index of $\oz$, respectively.
Throughout the whole paper, we always assume that $\oz$ satisfies
the following assumption.

\begin{proof}[\bf Assumption (c)]\rm Suppose that the positive Orlicz function
$\oz$ on $\rr_+$ is continuous, strictly increasing, subadditive, of
upper type $1$ and $\wz p_0(\oz)\in (\frac{n}{n+\tz(L)},1]$, where
$\tz(L)$ is as in \eqref{2.5}.
\end{proof}

Notice that for any $\oz$ of type $(p_0, p_1)$, if we set
$\wz\oz(t)\equiv\int_0^t\frac{\oz(s)}{s}\,ds$ for $t\in [0,\fz)$, then by
\cite[Proposition 3.1]{v}, $\wz\oz$ is equivalent to $\oz$, namely,
there exists a positive constant $C$ such that $C^{-1}\oz(t)\le
\wz\oz(t)\le C\oz(t)$ for all $t\in [0,\fz)$, and moreover, $\wz\oz$
is strictly increasing, subadditive and continuous function of type
$(p_0,\,p_1).$ Since all our results in this paper are invariant on equivalent
functions, we may always assume that $\oz$ satisfies the assumption (c);
otherwise, we may replace $\oz$ by $\wz\oz.$

We also make the following convention.

\begin{proof}[\bf Convention] From the assumption (c), it follows that
$\frac{n}{n+\tz(L)}<\wz p_0(\oz)\le \wz p_1(\oz)\le 1.$ In what
follows, if \eqref{2.9} holds for $\wz p_1(\oz)$ with $t\in
(1,\fz)$,
 then we choose $p_1(\oz)\equiv\wz p_1(\oz)$; otherwise  $ \wz p_1(\oz)<1$ and
 we choose $p_1(\oz)\in (\wz p_1(\oz),1).$ Similarly, if \eqref{2.9} holds for
$\wz p_0(\oz)$ with $t\in (0,1)$,
 then we choose $p_0(\oz)\equiv\wz p_0(\oz)$; otherwise
 we choose $p_0(\oz)\in (\frac{n}{n+\tz(L)}, \wz p_0(\oz))$ such that $\lfr
\frac nm(\frac{1}{p_0(\oz)}-1) \rf=\lfr\frac nm(\frac{1}{\wz
p_0(\oz)}-1) \rf$, where $m$ is as in \eqref{2.1}.
\end{proof}



Let $\oz$ satisfy the assumption (c). A measurable function $f$ on
$\rn$ is said to be in the Lebesgue type space $L(\oz)$ if
$\int_{\rn}\oz(|f(x)|)\,dx< \fz.$ Moreover, for any $f\in L(\oz)$, define
$$\|f\|_{L(\oz)}\equiv\inf\lf\{\lz>0:\ \int_{\rn}\oz\lf(\frac{|f(x)|}
{\lz}\r)\,dx\le 1\r\}.$$

Let $\oz$ satisfy the assumption (c). Define the function $\ro(t)$
on $\rr_+$ by setting, for all $t\in (0,\fz)$,
\begin{equation}\label{2.10}\ro(t)\equiv\frac{t^{-1}}{\oz^{-1}(t^{-1})},
\end{equation}
where $\oz^{-1}$ is the inverse function of
$\oz.$ Then by Proposition 2.1 in \cite{jyz}, $\ro$ is of type
$(1/p_1(\oz)-1,1/p_0(\oz)-1)$, which is denoted by
$(\bz_0(\ro),\bz_1(\ro))$ in what follows for short.

\subsection{The Orlicz-Hardy space $\hw$ and its dual space\label{s2.4}}

\hskip\parindent For any function $f\in L^1(\rn)$, the Lusin area function
$\cs_L(f)$ associated with the operator $L$ is defined by setting,
for all $x\in\rn$,
\begin{equation*}
\cs_L(f)(x)\equiv\lf(\int_{\Gamma(x)}|Q_{t^m}f(y)|^2\frac{\,dy\,dt}{t^{n+1}}\r)^{1/2},
\end{equation*}
where $Q_{t^m}$ is as in \eqref{2.7}. From the
assumption (b) together with \eqref{2.4}, it is easy to deduce that the
Lusin area function $\cs_L$ is bounded on $L^2(\rn).$ Auscher,
Duong and McIntosh \cite{adm2} proved that for any $p\in(1,\fz)$,
there exists a positive constant $C_p$ such that for all $f\in
L^p(\rn)$,\begin{equation}\label{2.11}
C_p^{-1}\|f\|_{L^p(\rn)}\le\|\cs_L(f)\|_{L^p(\rn)}\le
C_p\|f\|_{L^p(\rn)};
\end{equation}
see also Duong and McIntosh \cite{dm} and Yan \cite{ya1}. By
duality, the operator $S_{L^\ast} $ also satisfies the estimate
\eqref{2.11}, where $L^\ast$ is the adjoint operator of $L$ in
$L^2(\rn).$

Recall that the Orlicz-Hardy space $\hw$ and the BMO-type space
$\bmor$ were introduced in \cite{jyz}.

\begin{defn}\label{dp2.1}
Let $L$ satisfy the assumptions (a) and (b) and $\oz$ satisfy the
assumption (c). A function $f\in L^2(\rn)$ is said to be in $\wz
H_{\oz,\,L}(\rn)$ if $\cs_L(f)\in L(\oz)$, and moreover, define
$$\|f\|_{H_{\oz,\,L}(\rn)}\equiv \|\cs_L(f)\|_{L(\oz)}=\inf\lf
\{\lz>0:\int_{\rn}\oz\lf(\frac{\cs_L(f)(x)}{\lz}\r)\,dx\le 1\r\}.$$
The Orlicz-Hardy space $H_{\oz,\,L}(\rn)$ associated with the
operator $L$ is defined to be the  completion of $\wz
H_{\oz,\,L}(\rn)$  in the norm $\|\cdot\|_{H_{\oz,\,L}(\rn)}.$
\end{defn}

\begin{defn}\label{dp2.2}
Let $L$ satisfy the assumptions (a) and (b), $\oz$ satisfy the
assumption (c), $\ro$ be as in \eqref{2.10}, $q\in [1,\fz)$ and
$s\geq \lfr\frac nm(\frac{1}{\wz p_0(\oz)}-1)\rf.$ A function $f\in\cm(\rn)$ is
said to be in $\bmoq$ if
\begin{equation*}
\|f\|_{\bmoq}\equiv\sup_{B\subset\rn}\frac{1}{\ro(|B|)}\lf[\frac{1}{|B|}\int_B
|f(x)-P_{s,(r_B)^m}f(x)|^q \,dx\r]^{1/q}< \fz,
\end{equation*}
where the supremum is taken over all balls $B$ of $\rn.$
\end{defn}

\begin{rem}\label{r2.1}\rm
(i) Let $p\in (0,1]$, $q\in[1,\fz)$ and $s\ge \lfr\frac
nm(\frac 1p-1)\rf$. If $\oz(t)=t$ for all $t\in(0,\fz)$,
then $\hw=H_L^1(\rn)$ and $\bmoq=\mathrm{BMO}_L(\rn)$, where $H_L^1(\rn)$ and
$\mathrm{BMO}_L(\rn)$ were introduced by Duong and Yan
\cite{dy1,dy2}, respectively. If $p\in(n/(n+\tz(L)),1)$ and $\oz(t)=t^p$
for all $t\in(0,\fz)$, then $\hw=H_L^p(\rn)$ and $\bmoq=\cl_L(1/p-1,q,s)$, where
$H_L^p(\rn)$ and $\cl_L(1/p-1,q,s)$ were introduced by Yan \cite{ya2} and
Duong and Yan \cite{dy3}, respectively.

(ii) For $q\in[1,\fz)$ and $s\ge s_0\equiv\lfr \frac nm(\frac{1}{\wz
p_0(\oz)}-1)\rf$, the spaces $\bmoq$ coincide with
$\mathrm{BMO}_{\ro,L}^{2,s_0}(\rn)$; see Corollary 3.1 and Remark
4.4 of \cite{jyz}. Hence, in what follows, we denote the space
$\bmoq$ simply by $\bmor$.

(iii) It was proved in \cite{jyz} that the dual space of $H_{\oz,L}(\rn)$
 is the space  $\mathrm{BMO}_{\ro,L^\ast}(\rn)$, where $L^\ast$ denotes the adjoint
 operator of $L$ in $L^2(\rn)$.
\end{rem}

\section{$\vmo$-type spaces\label{s3}}

\hskip\parindent Suppose that the assumptions (a), (b) and (c) hold.
In this section, we study the spaces of functions with vanishing mean oscillation
 associated with operators and Orlicz functions. We begin with
 some notions and notation.

\begin{defn}\label{dp3.1} Let $L$ satisfy the assumptions (a) and (b),
$\oz$ satisfy the assumption (c), $\ro$ be as in \eqref{2.10}
and $s\geq \lfr\frac nm(\frac{1}{\wz p_0(\oz)}-1)\rf.$ A function
$f\in\bmor$ is said to be in $\vvmo$, if it satisfies the limiting
conditions $\gz_1(f)=\gz_2(f)=\gz_3(f)=0$, where
$$\gz_1(f)\equiv\lim_{c\rightarrow 0}\sup_{\mathrm{ball}\, B:\,r_B\le c}\bigg(\frac{1}{|B|
[\ro(|B|)]^2}\int_B|f(x)-P_{s,(r_B)^m}f(x)|^2\,dx\bigg)^{1/2},$$
$$\gz_2(f)\equiv\lim_{c\rightarrow \fz}\sup_{\mathrm{ball}\,B:\,r_B\ge c}\bigg(\frac{1}{|B|
[\ro(|B|)]^2}\int_B|f(x)-P_{s,(r_B)^m}f(x)|^2\,dx\bigg)^{1/2},$$
and
$$\quad\gz_3(f)\equiv\lim_{c\rightarrow \fz}\sup_{\mathrm{ball}\,B\subset [B(0,c)]^\com}
\bigg(\frac{1}{|B|
[\ro(|B|)]^2}\int_B|f(x)-P_{s,(r_B)^m}f(x)|^2\,dx\bigg)^{1/2}.$$ For any
function $f\in\vvmo$, we define $\|f\|_{\vvmo}\equiv\|f\|_{\bmor}$.
\end{defn}

We next present some properties of the space $\vvmo$. To this end, we
first recall some notions of tent spaces; see
\cite {cms,hsv,jyz}.

Let $\Gamma(x)\equiv\{(y,t)\in\rq:\,|x-y|<t\}$ denote the standard
cone (of aperture 1) with vertex $x\in\rn.$ For any closed set $F$
of $\rn$, denote by $\car{F}$ the union of all cones with vertices
in $F$, namely, $\car{F}\equiv\cup_{x\in F}\Gamma(x)$; and for any
open set $O$ in $\rn$, denote the tent over $O$ by $\widehat{O}$,
which is defined by $\widehat{O}\equiv[\car(O^\com)]^\com.$

For all measurable functions $g$ on ${\rr}^{n+1}_+$ and all
$x\in\rn$, define
\begin{equation*}
\ca(g)(x)\equiv
\lf(\int_{\Gamma(x)}|g(y,t)|^2\frac{\,dy\,dt}{t^{n+1}}\r)^{1/2}
\end{equation*}
and
\begin{equation*}
\ccc_{\ro}(g)(x)\equiv\sup_{\mathrm{ball}\, B\ni x}\frac{1}{\ro(|B|)}
\lf(\frac{1}{|B|}\int_{\widehat{B}}|g(y,t)|^2\frac{\,dy\,dt}{t}\r)^{1/2}.
\end{equation*}

For $p\in(0,\fz)$, the tent space $T^p_2(\rnz)$ is defined to be the set
of all measurable functions $g$ on $\rnz$ such that $\|g\|_{T^p_2(\rnz)}\equiv
\|\ca(g)\|_{L^p(\rn)}<\fz.$ The tent space $T_\oz({\rr}^{n+1}_+)$ associated to the
function $\oz$ is defined to be the set of all measurable functions
$g$ on $\rr^{n+1}_+$ such that $\ca(g)\in L(\oz)$, and its norm is
given by
$$\|g\|_{T_\oz({\rr}^{n+1}_+)}\equiv \|\ca(g)\|_{L(\oz)}=\inf
\lf\{\lz>0:\ \int_{\rn} \oz \lf(\frac{\ca(g)(x)}{\lz}\r) \,dx\le
1\r\};$$ the space $T^{\fz}_{\oz}({\rr}^{n+1}_+)$  is defined to be the
set of all measurable functions $g$ on $\rr^{n+1}_+$ such that
$\|g\|_{T^\fz_\oz({\rr}^{n+1}_+)}\equiv\|\ccc_{\ro}(g)\|_{L^\fz(\rn)}<\fz.$

In what follows, let $\tw0$ be the set of all $f\in\twz$ satisfying
$\eta_1(f)=\eta_2(f)=\eta_3(f)=0$, where
\begin{equation}\label{3.1}
\eta_1(f)\equiv \lim_{c\rightarrow0}\sup_{\mathrm{ball}\,B:\,r_B\le c}\frac{1}{\ro(|B|)}
\lf(\frac{1}{|B|}\int_{\widehat{B}}|f(y,t)|^2\frac{\,dy\,dt}{t}\r)^{1/2},
\end{equation}
$$\eta_2(f)\equiv \lim_{c\rightarrow\fz}\sup_{\mathrm{ball}\,B:\,r_B\ge c}\frac{1}{\ro(|B|)}
\lf(\frac{1}{|B|}\int_{\widehat{B}}|f(y,t)|^2\frac{\,dy\,dt}{t}\r)^{1/2},$$
and
$$\quad \eta_3(f)\equiv \lim_{c\rightarrow\fz}\sup_{\mathrm{ball}\,B\subset [B(0,c)]^\com}
\frac{1}{\ro(|B|)}
\lf(\frac{1}{|B|}\int_{\widehat{B}}|f(y,t)|^2\frac{\,dy\,dt}{t}\r)^{1/2}.$$
It is easy to see that $\tw0$ is a closed linear subspace of $\twz$.

Further, denote by $T^\fz_{\oz,1}(\rnz)$ the space of all $f\in\twz$
with $\eta_1(f)=0$, and $T^2_{2,c}(\rnz)$ the space of all $f\in
T^2_{2}(\rnz)$ with compact support. Obviously, we have
$T^2_{2,c}(\rnz)\subset \tw0\subset T^\fz_{\oz,1}(\rnz)$. Similarly,
let $\twc$ denote the set of all $f\in\twz$ with compact support.
It is easy to see that $\twc$ coincides with $T^2_{2,c}(\rnz)$.
Define $\twv$ to be the closure of $\twc$ in $T^\fz_{\oz,1}(\rnz)$.

Similarly to the proof of Lemma 3.2 in \cite{ddsty}, we have the
following proposition. We omit the details.

\begin{prop}\label{p3.1}
Let $\twv$ and $\tw0$ be defined as above. Then $\twv$ and $\tw0$
coincide with equivalent norms.
\end{prop}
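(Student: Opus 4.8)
The plan is to establish that the two spaces $\twv$ and $\tw0$ coincide with equivalent norms by proving two inclusions, exploiting the fact that $\twv$ is \emph{defined} as the closure of the compactly supported functions $\twc$ in the $T^\fz_{\oz,1}(\rnz)$ norm, while $\tw0$ is described intrinsically by the three vanishing conditions $\eta_1=\eta_2=\eta_3=0$. First I would record the elementary observation already noted in the excerpt, namely $\twc = T^2_{2,c}(\rnz) \subset \tw0 \subset T^\fz_{\oz,1}(\rnz)$, and also that $\tw0$ is a closed linear subspace of $\twz$. Since every element of $\twc$ satisfies $\eta_1=\eta_2=\eta_3=0$ (compact support makes the tails vanish and forces $\eta_2,\eta_3$ to be zero, while $\eta_1=0$ holds by membership in $\twc$), and since $\tw0$ is closed in the relevant norm, the inclusion $\twv\subset\tw0$ should follow by showing that the three functionals $\eta_1,\eta_2,\eta_3$ are continuous (or at least lower semicontinuous and vanishing in the limit) with respect to the $T^\fz_{\oz,1}(\rnz)$ norm, so that the defining conditions pass to the closure.

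For the reverse inclusion $\tw0\subset\twv$, which I expect to be the substantive part, the strategy is to approximate an arbitrary $f\in\tw0$ by compactly supported functions in $\twc$ in the $T^\fz_{\oz,1}(\rnz)$ norm. Given $f$ satisfying $\eta_1(f)=\eta_2(f)=\eta_3(f)=0$, I would truncate $f$ both in the spatial variable and in the $t$-variable: set $f_{R,\delta} \equiv f\cdot \mathbf{1}_{\{|x|<R,\ \delta<t<1/\delta\}}$ (or a similar cutoff adapted to the cone/tent geometry), producing compactly supported functions. The task is then to show $\|f - f_{R,\delta}\|_{T^\fz_{\oz,1}(\rnz)}\to 0$ as $R\to\fz$ and $\delta\to 0$. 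Here the vanishing conditions enter crucially: $\eta_2(f)=0$ controls the contribution of large balls (large-scale tails), $\eta_3(f)=0$ controls balls far from the origin (spatial tails), and $\eta_1(f)=0$ controls small balls (the fine-scale/near-boundary behavior governed by the defining seminorm of $T^\fz_{\oz,1}$). The reason one must impose $\eta_1=0$ is precisely that the $T^\fz_{\oz,1}$ norm only sees the small-ball supremum in its closure structure, while $\eta_2,\eta_3$ guarantee that the truncation error from removing the far and large parts of $f$ is uniformly small over the supremum defining $\ccc_\ro$.

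The main obstacle will be the uniformity of these estimates: the norm on $T^\fz_{\oz,1}(\rnz)$ is a supremum over \emph{all} balls $B$ of a normalized Carleson-type average over the tent $\widehat{B}$, so controlling $\|f-f_{R,\delta}\|$ requires simultaneously bounding the error over small, large, near, and far balls, and splitting the tent $\widehat B$ over a generic ball into the truncated and non-truncated regions without losing control of the $1/\ro(|B|)$ normalization. I would handle this by a careful case analysis on the radius $r_B$ and the location of $B$ relative to $B(0,R)$, in each case invoking the relevant $\eta_j(f)=0$ to make the corresponding piece smaller than any prescribed $\ez$, and using the subadditivity and type properties of $\oz$ and $\ro$ from Assumption (c) to combine the pieces. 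Since the excerpt states this is proved \emph{similarly to the proof of Lemma 3.2 in \cite{ddsty}}, I would model the quantitative truncation estimates on that argument, adapting the power-weight bounds there to the general Orlicz setting via the type indices $(\bz_0(\ro),\bz_1(\ro))$ of $\ro$.
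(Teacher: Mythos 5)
You are correct, and your route is essentially the one the paper itself relies on (it omits the details and refers to Lemma 3.2 of \cite{ddsty}, which is exactly this truncation argument): the inclusion $\twv\subset\tw0$ follows because each $\eta_j$ is $1$-Lipschitz for the $\twz$-norm (by subadditivity of the tent seminorms), so $\tw0$ is a closed subspace containing $\twc$, while the reverse inclusion is the box-truncation plus case analysis on balls that you outline, with each vanishing condition $\eta_1,\eta_2,\eta_3$ disposing of the small, large, and far balls respectively. The one step you leave implicit is the genuinely quantitative point for balls of intermediate radius near the origin: there the truncation error sits in the bottom slice $\widehat{B}\cap\{t\le\delta\}$ of the tent, and to exploit $\eta_1(f)=0$ one covers this slice by the tents of roughly $\delta^{-n}$ balls of radius comparable to $\delta$ and sums, using that $\ro$ stays bounded near zero; this is precisely the estimate in the cited lemma that you propose to adapt, so your plan closes.
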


Recall that a measure $d\mu$ on ${\rr}^{n+1}_+$ is said to be a
$\ro$-Carleson measure if
\begin{equation*}
\sup_{B\subset
\rn}\lf\{\frac{1}{|B|[\ro(|B|)]^2}\int_{\widehat{B}}|d\mu|\r\}<\fz,
\end{equation*}
where the supremum is taken over all balls $B$ of $\rn$; see \cite{hsv,jyz}.

Let $m$ be the constant in \eqref{2.1}, $s\ge s_1\ge \lfr \frac
nm(\frac{1}{\wz p_0(\oz)}-1)\rf$, where $\wz p_0(\oz)$ is the
critical lower type index of $\oz$. Let $C_{m,s,s_1}$ be a positive
constant such that
\begin{equation}\label{3.2}
C_{m,s,s_1}\int^\fz_0t^{m(s+2)}e^{-2t^m}(1-e^{-t^m})^{s_1+1}\frac{\,dt}{t}=1.
\end{equation}

The following Lemma \ref{l3.1} and Theorem \ref{t3.1} were established in \cite{jyz}.

\begin{lem}\label{l3.1}
Let $L$, $\oz$ and $\rho$ be as in Definition
\ref{dp3.1}, and $s\ge s_1\ge \lfr \frac nm(\frac{1}{\wz p_0(\oz)}-1)\rf$.
Suppose that  $f\in\cm(\rn)$ such that
$|Q_{s,\,t^m}(I-P_{s_1,t^m})f(x)|^2\,dx\,dt/t$ is a $\ro$-Carleson
measure on $\rnz$ and $g\in\hz\cap L^2(\rn)$. Then
\begin{equation*}
\int_{\rn}f(x)g(x)\,dx=C_{m,s,s_1}\int_{\rnz}
Q_{s,t^m}(I-P_{s_1,t^m})f(x)Q^\ast_{t^m}g(x)\,\frac{dx\,dt}{t}.
\end{equation*}
\end{lem}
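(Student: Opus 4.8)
The plan is to read the asserted identity as a Calder\'on reproducing formula in disguise, combined with the adjoint relations between the operators and an application of Fubini's theorem. First I would observe that the constant $C_{m,s,s_1}$ in \eqref{3.2} is chosen precisely so that, on the spectrum of $L$, the composite multiplier produced by the right-hand side equals the identity. Concretely, setting $\psi(z)\equiv z^{s+2}e^{-2z}(1-e^{-z})^{s_1+1}$, a direct computation gives $Q_{t^m}Q_{s,t^m}(I-P_{s_1,t^m})=\psi(t^mL)$, and for each $\lz\in(0,\fz)$ the substitution $u=t\lz^{1/m}$ transforms
$$C_{m,s,s_1}\int_0^\fz\psi(t^m\lz)\,\frac{\,dt}{t}=C_{m,s,s_1}\int_0^\fz t^{m(s+2)}\lz^{s+2}e^{-2t^m\lz}(1-e^{-t^m\lz})^{s_1+1}\frac{\,dt}{t}$$
into exactly the normalizing integral in \eqref{3.2}, so it equals $1$ for every $\lz$. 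This is the algebraic heart of the statement.

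Second, I would convert this spectral identity into an operator identity on $L^2(\rn)$. One checks that $\psi\in\Psi(S^0_\nu)$: near the origin $\psi$ behaves like $|z|^{s+s_1+3}$, while on the sector $S^0_\nu$ with $\nu<\pi/2$ the factor $e^{-2z}$ forces exponential decay at infinity. Since $L^\ast$ also has a bounded $H_\fz$-calculus and thus satisfies the square function estimate \eqref{2.4}, the standard Calder\'on reproducing formula yields, for all $g\in L^2(\rn)$,
$$g=C_{m,s,s_1}\int_0^\fz\psi(t^mL^\ast)g\,\frac{\,dt}{t}$$
with convergence in $L^2(\rn)$. Here I would factor $\psi(t^mL^\ast)=Q^\ast_{t^m}[Q_{s,t^m}(I-P_{s_1,t^m})]^\ast$, using that all these operators are functions of $L^\ast$ and hence commute.

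Third, I would pair the reproducing formula against $f$ and move operators across the $L^2$ pairing. Because $Q^\ast_{t^m}$ and $[Q_{s,t^m}(I-P_{s_1,t^m})]^\ast$ commute and are the $L^2$-adjoints of $Q_{t^m}$ and $Q_{s,t^m}(I-P_{s_1,t^m})$ respectively, for each fixed $t$ one has
$$\int_\rn f\,Q^\ast_{t^m}[Q_{s,t^m}(I-P_{s_1,t^m})]^\ast g\,dx=\int_\rn Q_{s,t^m}(I-P_{s_1,t^m})f\,Q^\ast_{t^m}g\,dx,$$
whose right-hand side is exactly the integrand on the right-hand side of the lemma, and whose left-hand side is $\int_\rn f\,\psi(t^mL^\ast)g\,dx$. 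Multiplying by $C_{m,s,s_1}$, integrating in $\frac{\,dt}{t}$, and inserting the reproducing formula for $g$ then produces $\int_\rn fg\,dx$, provided the order of the $x$- and $t$-integrations may be interchanged.

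The main obstacle is precisely this interchange, together with the a priori meaning of $\int_\rn fg\,dx$, since $f\in\cm(\rn)$ need not lie in $L^2(\rn)$. I would control the double integral by a tent-space pairing estimate: the hypothesis that $|Q_{s,t^m}(I-P_{s_1,t^m})f(x)|^2\,dx\,dt/t$ is a $\ro$-Carleson measure says exactly that $Q_{s,t^m}(I-P_{s_1,t^m})f\in\twz$, while $g\in\hz\cap L^2(\rn)$ gives $\ca(Q^\ast_{t^m}g)=\cs_{L^\ast}(g)\in L(\oz)$, i.e. $Q^\ast_{t^m}g\in\twl$. The duality/pairing inequality for the tent spaces $\twz$ and $\twl$ (as in \cite{cms,hsv,jyz}) then bounds $\int_{\rnz}|Q_{s,t^m}(I-P_{s_1,t^m})f|\,|Q^\ast_{t^m}g|\frac{\,dx\,dt}{t}$ by a constant times the product of the Carleson norm of $f$ and $\|g\|_{\hz}$, yielding absolute convergence and legitimizing Fubini. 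To be fully rigorous I would first prove the identity for $g$ with $Q^\ast_{t^m}g$ compactly supported in $\rnz$, so that everything is absolutely convergent and the $L^2$ reproducing formula applies verbatim, and then remove the truncation by a density/limiting argument in $\twl$, using the pairing bound to pass to the limit. This last density step, in which the truncation of $g$ must be matched with the Carleson control of $f$, is where the real care is needed.
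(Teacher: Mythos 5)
You should first note that this paper contains no proof of Lemma \ref{l3.1}: it and Theorem \ref{t3.1} are explicitly quoted from \cite{jyz}, so your attempt can only be measured against the standard argument given there. Your route is indeed that standard route, and its algebraic core is correct: $Q_{t^m}Q_{s,t^m}(I-P_{s_1,t^m})=\psi(t^mL)$ with $\psi(z)=z^{s+2}e^{-2z}(1-e^{-z})^{s_1+1}$, which lies in $\Psi(S^0_\gz)$ for any $\gz\in(\nu,\pi/2)$; the normalization \eqref{3.2} makes $\int_0^\fz\psi(t^m\lz)\,\frac{dt}{t}=C_{m,s,s_1}^{-1}$ for every $\lz$ in the sector; and the absolute convergence of the right-hand side does follow from the $\twz$--$\twl$ pairing estimate, since the Carleson hypothesis says exactly $Q_{s,t^m}(I-P_{s_1,t^m})f\in\twz$ while $\ca(Q^\ast_{t^m}g)=\cs_{L^\ast}(g)\in L(\oz)$.

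The gaps are in your ``fully rigorous'' completion, and they are genuine. First, the dense class you propose to start from contains only $g=0$: if $Q^\ast_{t^m}g=t^mL^\ast e^{-t^mL^\ast}g$ is compactly supported in $\rnz$, then $L^\ast e^{-uL^\ast}g=0$ for all large $u$; since $L$ has dense range (assumption (b)), $L^\ast$ is injective, so $e^{-uL^\ast}g=0$ for all large $u$, and analyticity of the semigroup plus the identity theorem force $e^{-uL^\ast}g\equiv0$, whence $g=0$. The truncation must instead be made on the synthesis side: one works with $g=\pi_{L^\ast,s,s_1}(b)$, $b\in T^2_{2,c}(\rnz)$ (equivalently, finite linear combinations of molecules), for which $Q^\ast_{t^m}g$ is \emph{not} compactly supported but has, via \eqref{2.8}, enough decay that every integral converges absolutely against $f\in\cm(\rn)$. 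Second, even with a correct dense class, your limiting argument controls only the right-hand side: the pairing bound gives $\mathrm{RHS}(g_k)\to\mathrm{RHS}(g)$ once $Q^\ast_{t^m}g_k\to Q^\ast_{t^m}g$ in $\twl$, but nothing in your sketch yields $\int_{\rn}fg_k\,dx\to\int_{\rn}fg\,dx$. This cannot be fixed by dominated convergence, because $fg$ need not belong to $L^1(\rn)$ under the stated hypotheses: already for $L=\Delta$ and $\oz(t)=t$, take $f(x)=\log|x|$ and $g=\sum_k2^{-k}a_k$ with $a_k$ classical atoms on unit balls centered at points $x_k$ with $|x_k|=2^{2^k}$; then $g\in H^1(\rn)\cap L^2(\rn)$ but $\int_{\rn}|fg|\,dx=\fz$. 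So the left-hand side must be \emph{interpreted} --- as the value at $g$ of the $\bmor$--$\hz$ duality pairing, defined by density from molecules --- and one cannot simply invoke that duality, since in \cite{jyz} this very lemma is a step in its proof. The missing content is precisely the verification that $\lim_k\int_{\rn}fg_k\,dx$ exists, is independent of the molecular approximation of $g$, and is what the symbol $\int_{\rn}fg\,dx$ means in the statement.
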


\begin{thm}\label{t3.1}
Let $L$, $\oz$ and $\ro$ be as in Definition \ref{dp3.1}, and $s\ge s_1\ge
\lfr\frac nm(\frac{1}{\wz p_0(\oz)}-1)\rf$.
Then the following conditions are equivalent:

(a) $f\in \bmor$;

(b) $f\in \cm(\rn)$ and $|Q_{s,\,t^m}(I-P_{s_1,t^m})f(x)|^2\,dx\,dt/t$ is
a $\ro$-Carleson measure  on ${\rr}^{n+1}_+$.\\
Moreover, $\|Q_{s,\,t^m}(I-P_{s_1,t^m})f\|_{\twz}$ is equivalent to
$\|f\|_{\bmor}.$
\end{thm}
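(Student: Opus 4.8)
The plan is to reduce everything to a two-sided norm estimate and then prove the two inclusions separately. Unwinding the definitions of $\twz$ and of a $\ro$-Carleson measure, one sees that $\|Q_{s,\,t^m}(I-P_{s_1,t^m})f\|_{\twz}$ is precisely the square root of the $\ro$-Carleson norm $\|d\mu\|$ of $d\mu\equiv|Q_{s,\,t^m}(I-P_{s_1,t^m})f(x)|^2\,dx\,dt/t$; thus the assertion that $\|Q_{s,\,t^m}(I-P_{s_1,t^m})f\|_{\twz}$ is equivalent to $\|f\|_{\bmor}$ already contains the equivalence of (a) and (b), and I would prove the two inequalities $\|f\|_{\bmor}\lesssim\|d\mu\|^{1/2}$ and $\|d\mu\|^{1/2}\lesssim\|f\|_{\bmor}$ in turn. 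The analytic inputs throughout are the quadratic estimate \eqref{2.4} applied to $\psi(z)\equiv z^{s+1}e^{-z}(1-e^{-z})^{s_1+1}\in\Psi(S^0_\nu)$ (note that $Q_{s,\,t^m}(I-P_{s_1,t^m})=\psi(t^mL)$), the pointwise kernel bounds \eqref{2.3} and \eqref{2.8}, the $L^2$-boundedness of the area function \eqref{2.11}, and above all the reproducing formula of Lemma \ref{l3.1}.

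For the direction (b)$\Rightarrow$(a), assume $f\in\cm(\rn)$ and $d\mu$ is $\ro$-Carleson. Fix a ball $B\equiv B(z_B,r_B)$ and estimate the oscillation by $L^2$-duality:
$$\lf(\int_B\lf|f-P_{s,(r_B)^m}f\r|^2\,dx\r)^{1/2}=\sup\lf\{\lf|\int_{\rn}\lf[f-P_{s,(r_B)^m}f\r]h\,dx\r|:\ \supp h\subset B,\ \|h\|_{L^2(\rn)}\le1\r\}.$$
For each admissible $h$, move $P_{s,(r_B)^m}$ to the other factor and set $g\equiv(I-P_{s,(r_B)^m})^\ast h=(I-e^{-(r_B)^mL^\ast})^{s+1}h$, so that $\int_{\rn}[f-P_{s,(r_B)^m}f]h\,dx=\int_{\rn}fg\,dx$ and, since $h\in L^2(\rn)$ is compactly supported, $g\in\hz\cap L^2(\rn)$. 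Lemma \ref{l3.1} then rewrites this as $C_{m,s,s_1}\int_{\rnz}Q_{s,\,t^m}(I-P_{s_1,t^m})f(x)\,Q^\ast_{t^m}g(x)\,dx\,dt/t$. I would split this last integral over the tent $\widehat{\tilde B}$ of a fixed dilate $\tilde B$ of $B$ and its complement. On $\widehat{\tilde B}$, the Cauchy-Schwarz inequality bounds it by the product of $(\int_{\widehat{\tilde B}}|Q_{s,\,t^m}(I-P_{s_1,t^m})f|^2\,dx\,dt/t)^{1/2}\lesssim|B|^{1/2}\ro(|B|)\|d\mu\|^{1/2}$ and a factor controlled by $\|\cs_{L^\ast}(g)\|_{L^2(\rn)}\lesssim\|g\|_{L^2(\rn)}\lesssim1$ via \eqref{2.11}. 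On the complement one decomposes $\rn\setminus\tilde B$ into dyadic annuli $2^{k+1}\tilde B\setminus2^k\tilde B$ and uses the decay of $\cs_{L^\ast}(g)$ away from $B$ (a consequence of $\supp h\subset B$ and the kernel decay for $Q^\ast_{t^m}$) to sum the resulting geometric series, producing the same bound. Taking the supremum over $h$ yields $(\int_B|f-P_{s,(r_B)^m}f|^2\,dx)^{1/2}\lesssim|B|^{1/2}\ro(|B|)\|d\mu\|^{1/2}$, and the supremum over $B$ gives $\|f\|_{\bmor}\lesssim\|d\mu\|^{1/2}$.

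For the direction (a)$\Rightarrow$(b), assume $f\in\bmor$, fix $B$ as above, and decompose $f=f_1+f_2+f_3$, where $f_1\equiv(f-P_{s_1,(r_B)^m}f)\chi_{\tilde B}$, $f_2\equiv(f-P_{s_1,(r_B)^m}f)\chi_{(\tilde B)^\com}$ and $f_3\equiv P_{s_1,(r_B)^m}f$, with $\tilde B$ a fixed dilate of $B$. For $f_1$, the quadratic estimate \eqref{2.4} (after the substitution $t\mapsto t^m$) gives $\int_{\widehat B}|Q_{s,\,t^m}(I-P_{s_1,t^m})f_1|^2\,dx\,dt/t\lesssim\|f_1\|_{L^2(\rn)}^2=\int_{\tilde B}|f-P_{s_1,(r_B)^m}f|^2\,dx\lesssim|B|[\ro(|B|)]^2\|f\|_{\bmor}^2$, the last step using Definition \ref{dp2.2}, the independence of $\bmor$ of the parameter $s$ (Remark \ref{r2.1}(ii)), and the standard comparison of $P_{s_1,(r_B)^m}f$ across comparable scales. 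For $f_2$, the constraint $(x,t)\in\widehat B$ forces $t\le r_B$ while $f_2$ is supported in $(\tilde B)^\com$, so the kernel bound \eqref{2.8}, together with the cancellation carried by the $(s+1)$ powers of $L$ and the growth control $f\in\cm(\rn)$, yields pointwise decay of $Q_{s,\,t^m}(I-P_{s_1,t^m})f_2(x)$ that, summed over the annuli $2^{k+1}\tilde B\setminus2^k\tilde B$, is again $\lesssim|B|[\ro(|B|)]^2\|f\|_{\bmor}^2$. For $f_3$, I would use the identity $Q_{s,\,t^m}(I-P_{s_1,t^m})=t^{m(s+1)}L^{s+1}e^{-t^mL}(I-e^{-t^mL})^{s_1+1}$ and the fact that $t\le r_B$ on $\widehat B$: composing the factor $(I-e^{-t^mL})^{s_1+1}$ with $P_{s_1,(r_B)^m}$ produces extra powers of $t/r_B$ by functional calculus, which make $\int_{\widehat B}|Q_{s,\,t^m}(I-P_{s_1,t^m})f_3|^2\,dx\,dt/t$ convergent and bounded by $|B|[\ro(|B|)]^2\|f\|_{\bmor}^2$. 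Adding the three contributions and taking the supremum over $B$ gives $\|d\mu\|^{1/2}\lesssim\|f\|_{\bmor}$, and together with the previous paragraph this proves the stated norm equivalence.

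The genuinely delicate part is not the local (quadratic-estimate) term but the off-diagonal contributions: the tail integral over the complement of the tent in the direction (b)$\Rightarrow$(a), and the far and smoothing parts $f_2,f_3$ in the direction (a)$\Rightarrow$(b). In each case one must convert the Poisson/Gaussian kernel decay of \eqref{2.3} and \eqref{2.8} into geometric summability over dyadic annuli while correctly tracking the weight $\ro$, which is only of type $(\bz_0(\ro),\bz_1(\ro))$ rather than a single fixed power; the hypothesis $s\ge s_1\ge\lfr\frac nm(\frac{1}{\wz p_0(\oz)}-1)\rf$ is exactly what guarantees that these annular sums converge. A preliminary point, which must be checked before any of the pairings above is even written down, is the absolute convergence of every integral for $f$ merely in the growth class $\cm(\rn)$; this is precisely where the regularizing factor $(I-P_{s_1,t^m})$ is indispensable.
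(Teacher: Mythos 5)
First, a point of reference: the paper does not actually prove Theorem \ref{t3.1} --- it is quoted from \cite{jyz} ("The following Lemma \ref{l3.1} and Theorem \ref{t3.1} were established in \cite{jyz}"). The closest in-paper model is the proof of Theorem \ref{t3.2}, which runs on exactly the two rails you chose. Your direction (b)$\Rightarrow$(a) --- $L^2$-duality over $B$, passing to $g=(I-P_{s,(r_B)^m})^\ast h$, invoking Lemma \ref{l3.1}, then splitting into the tent over a fixed dilate plus dyadic annuli, with Cauchy--Schwarz, the Carleson bound and \eqref{2.11} locally and kernel decay far away --- is precisely the scheme of that proof and is sound (your unproved claim that $g\in\hz\cap L^2(\rn)$ is used there at the same level of detail; the summability of the annular terms requires choosing $\ez>n\bz_1(\ro)$, which is available by assumption (c), and this, rather than the hypothesis on $s,s_1$, is what makes that sum converge).

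The genuine gap is in your treatment of the smooth part $f_3=P_{s_1,(r_B)^m}f$ in the direction (a)$\Rightarrow$(b). You propose to compose the factor $(I-e^{-t^mL})^{s_1+1}$ with $P_{s_1,(r_B)^m}$ to gain powers of $t/r_B$ and then conclude a bound by $|B|[\ro(|B|)]^2\|f\|_{\bmor}^2$. But this grouping consumes all the oscillation factors: what is left acting on $f$ is $t^{m(s+1)}L^{s+1}e^{-t^mL}$ composed with a small kernel-bounded operator, i.e.\ an operator for which you only have size estimates and no cancellation that pairs with the BMO norm. Since $\|f\|_{\bmor}$ controls only oscillations $f-P_{s',\sigma}f$ with $s'\ge s_0$, and not local $L^2$ averages of $f$ itself (these can be arbitrarily large at fixed $\|f\|_{\bmor}$), kernel smallness alone cannot produce the claimed bound, and this step fails as written. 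The repair is to regroup so that an oscillation of order at least $s_0+1$ survives on $f$: commuting the (mutually commuting) functions of $L$, write $Q_{s,t^m}(I-P_{s_1,t^m})P_{s_1,(r_B)^m}f=\bigl[Q_{s,t^m}P_{s_1,(r_B)^m}\bigr](I-P_{s_1,t^m})f$. The factor $Q_{s,t^m}P_{s_1,(r_B)^m}=\sum_{j=1}^{s_1+1}c_jQ_{s,t^m}e^{-j(r_B)^mL}$ has, by \eqref{2.3}, kernel bounded by $(t/r_B)^{m(s+1)}(r_B)^\ez(r_B+|x-y|)^{-n-\ez}$, while $(I-P_{s_1,t^m})f$ is an oscillation at scale $t$ of order $s_1+1\ge s_0+1$, so on every ball of radius $t$ its $L^2$ norm is $\ls t^{n/2}\ro(t^n)\|f\|_{\bmor}$ by Remark \ref{r2.1}(ii); covering $\rn$ by such balls and using the lower type $\bz_0(\ro)$ of $\ro$ gives the pointwise bound $(t/r_B)^{m(s+1)+n\bz_0(\ro)}\ro(|B|)\|f\|_{\bmor}$, which integrates over $\widehat B$ to the desired estimate. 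This is exactly the difficulty that the paper's proof of Theorem \ref{t3.2} confronts with its $\Psi_1(L)$, $\Psi_2(L)$ expansion (there the stronger hypothesis $s\ge 2s_1$ is what makes that bookkeeping possible). A similar, milder, remark applies to your $f_2$: the bound by $\|f\|_{\bmor}$ over each annulus comes from covering it by balls of radius $\sim r_B$ (the argument of \cite{ddy} cited in the paper), not from the growth class $\cm(\rn)$, which only guarantees absolute convergence of the integrals.
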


We now establish a characterization of the space $\vvmo$ via the space $\twv$
by borrowing some ideas from \cite{ddsty}. We remark that
if $\oz(t)=t$ for all $t\in (0,\fz)$, then Theorem
\ref{t3.2} coincides with Proposition 3.3 in \cite{ddsty}.

\begin{thm}\label{t3.2}
Let $L$, $\oz$ and $\ro$ be as in Definition \ref{dp3.1}, $s_1\ge
s_0\ge\lfr\frac nm(\frac{1}{\wz p_0(\oz)}-1)\rf$ and $s\ge 2s_1$.
Then the following conditions are equivalent:

(a) $f\in \vmor$;

(b) $f\in \cm(\rn)$ and $Q_{s,\,t^m}(I-P_{2s_1,t^m})f\in \twv$.
\end{thm}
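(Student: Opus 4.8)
The plan is to treat Theorem \ref{t3.2} as the vanishing-oscillation refinement of Theorem \ref{t3.1}. By Theorem \ref{t3.1}, applied with $s_1$ replaced by $2s_1$ (admissible since $s\ge 2s_1\ge s_0\ge\lfr\frac nm(\frac1{\wz p_0(\oz)}-1)\rf$), for $f\in\cm(\rn)$ the function $g\equiv Q_{s,t^m}(I-P_{2s_1,t^m})f$ lies in $\twz$ if and only if $f\in\bmor$, with $\|g\|_{\twz}$ comparable to $\|f\|_{\bmor}$. Since $\vmor$ and, by Proposition \ref{p3.1}, $\twv=\tw0$ are exactly the subspaces cut out inside $\bmor$ and $\twz$ by the vanishing conditions $\gz_1=\gz_2=\gz_3=0$ and $\eta_1=\eta_2=\eta_3=0$ respectively, the theorem reduces to showing that, for $f\in\bmor$, the three limits $\gz_i(f)=0$ hold if and only if the three limits $\eta_i(g)=0$ hold. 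I would prove each implication by localizing the two-sided estimates underlying Theorem \ref{t3.1} to balls of small radius (for $\eta_1$ versus $\gz_1$), of large radius (for $\eta_2$ versus $\gz_2$), and far from the origin (for $\eta_3$ versus $\gz_3$).

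For the implication (a)$\Rightarrow$(b), fix a ball $B$ and decompose $f=P_{s_0,(r_B)^m}f+(f-P_{s_0,(r_B)^m}f)\chi_{\wz B}+(f-P_{s_0,(r_B)^m}f)\chi_{(\wz B)^\com}$, where $\wz B$ is a fixed dilate of $B$. Applying $Q_{s,t^m}(I-P_{2s_1,t^m})$ to each piece, I would estimate the tent integral $\frac1{|B|}\int_{\widehat B}|g(y,t)|^2\,dy\,dt/t$ as follows: over $\widehat B$ one has $t\le r_B$, so the cancellation carried by $(I-P_{2s_1,t^m})=(I-e^{-t^mL})^{2s_1+1}$ together with $s\ge 2s_1$ makes the action on the smooth part $P_{s_0,(r_B)^m}f$ negligible on the tent; the local part is controlled in $L^2$ by $\int_{\wz B}|f-P_{s_0,(r_B)^m}f|^2$ through the $L^2$-boundedness of $Q_{s,t^m}$; and the far part is handled by the kernel decay \eqref{2.3} and \eqref{2.8}. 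Dividing by $\ro(|B|)$ and summing, this bounds each $\eta_i(g)$ by the corresponding $\gz_i(f)$ plus tail sums over annuli $2^{k+1}\wz B\setminus 2^k\wz B$; the tails are made uniformly small by balancing the decay of $g$ against the type and lower type of $\oz$ and $\ro$, so that $\gz_i(f)=0$ forces $\eta_i(g)=0$.

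For the converse (b)$\Rightarrow$(a), I would recover the local oscillation of $f$ from $g$ by means of the reproducing formula in Lemma \ref{l3.1}. By duality, $(\int_B|f-P_{s_0,(r_B)^m}f|^2)^{1/2}$ equals the supremum over $h\in L^2(\rn)$ with $\supp h\subset B$ and $\|h\|_{L^2(\rn)}\le1$ of $|\int_{\rn}f\,(I-P^\ast_{s_0,(r_B)^m})h|$; applying Lemma \ref{l3.1} rewrites this pairing as $C_{m,s,2s_1}\int_{\rnz}g(x,t)\,Q^\ast_{t^m}[(I-P^\ast_{s_0,(r_B)^m})h](x)\,dx\,dt/t$. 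A Cauchy--Schwarz splitting of $\rnz$ into the tent $\widehat{cB}$ over a dilate of $B$ and its complement, together with the $L^2$ square-function estimate \eqref{2.4} for $Q^\ast_{t^m}$ and the decay of $Q^\ast_{t^m}[(I-P^\ast_{s_0,(r_B)^m})h]$ away from $B$, bounds the oscillation by the tent mass $\frac1{|B|[\ro(|B|)]^2}\int_{\widehat{cB}}|g|^2\,dy\,dt/t$ plus controllable tails. Since $h$ is supported in $B$, the relevant region stays near $B$, and the limits $\eta_i(g)=0$ then transfer to $\gz_i(f)=0$ scale by scale.

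The main obstacle will be the uniform control of the tail terms in both directions. Each of the three vanishing conditions singles out a different regime of balls, and the tail integrals inevitably mix nearby scales with remote locations; making them uniformly small requires splitting $\rnz$ into annular and dyadic pieces and balancing the kernel decay from \eqref{2.3} and \eqref{2.8} against the growth of $\ro$ governed by its type $(\bz_0(\ro),\bz_1(\ro))$. The role of the hypotheses $s\ge 2s_1\ge 2s_0$ is precisely to supply enough orders of cancellation so that these tails converge and so that the smooth part $Q_{s,t^m}(I-P_{2s_1,t^m})P_{s_0,(r_B)^m}f$ is negligible on the tent; verifying that this bookkeeping is compatible simultaneously with all three limits is the delicate point.
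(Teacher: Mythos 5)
Your overall architecture matches the paper's: both directions are run through localized versions of the two-sided estimates behind Theorem \ref{t3.1}, and your converse (duality for the oscillation, Lemma \ref{l3.1} with the pair $(s,2s_1)$, Cauchy--Schwarz split into $\widehat{cB}$ and annular complements, then transfer of the vanishing conditions via Proposition \ref{p3.1}) is essentially the paper's proof of (b)$\Rightarrow$(a). The genuine gap is in (a)$\Rightarrow$(b), in your treatment of the smooth part. You claim that the cancellation of $(I-P_{2s_1,t^m})=(I-e^{-t^mL})^{2s_1+1}$, together with $s\ge 2s_1$, makes $Q_{s,\,t^m}(I-P_{2s_1,t^m})P_{s_0,(r_B)^m}f$ negligible on the tent. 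It is not. That cancellation only produces factors of the form $(t/r_B)^{\dz}$, and $\frac1{|B|}\int_{\widehat B}(t/r_B)^{2\dz}\,\frac{dx\,dt}{t}\sim 1$ for \emph{every} ball $B$: the tent integral of such a factor is $O(1)$, never $o(1)$, however large $\dz$ is. So this term contributes a quantity of size $\|f\|_{\bmor}$ uniformly in $B$, and your argument only yields $\eta_i\ls(\text{vanishing terms})+C\|f\|_{\bmor}$, i.e., nothing beyond what Theorem \ref{t3.1} already gives. Concretely, take $L=\Delta$, $\oz(t)=t$ and $f(x)=\sin(x_1/r_B)$: then $\|f\|_{\bmor}\sim 1$, $P_{s_0,(r_{2B})^m}f$ is a fixed nonzero multiple of $f$, and the normalized tent integral of $|Q_{s,\,t^m}(I-P_{2s_1,t^m})P_{s_0,(r_{2B})^m}f|^2$ over $\widehat B$ is bounded below by a positive constant independent of $r_B$; hence no bound of the form $\epsilon(r_B)\|f\|_{\bmor}$ with $\epsilon(r)\to0$ as $r\to 0$ can hold, and the word ``negligible'' cannot be cashed in.

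What is actually needed --- and what the paper does --- is to bound the smooth-part contribution by the same local oscillation quantities $\dz_k(f,B)$ (suprema of normalized oscillations over balls of radius comparable to $r_B$ inside $2^{k+1}B$) that control the other two pieces, because those are exactly the quantities that vanish in the three limiting regimes when $f\in\vmor$. This is achieved by an algebraic identity, not by size estimates alone: writing $t_{2B}\equiv(r_{2B})^m$ and expanding $I-P_{2s_1,t^m}=(I-e^{-t_{2B}L}+e^{-t_{2B}L}-e^{-t^mL})^{2s_1+1}$ binomially, every term retains at least $s_0+1$ factors of $I-e^{-t_{2B}L}$ or of $I-e^{-(t_{2B}-t^m)L}$, which gives $I-P_{2s_1,t^m}=\Psi_1(L)(I-P_{s_0,t_{2B}})+\Psi_2(L)(I-P_{s_0,t_{2B}-t^m})$; the surviving oscillation operators act on $f$ and convert the smooth part into oscillations at scale $r_B$, bounded by $\sum_{k}2^{-k\ez}\dz_k(f,B)$, after which your dominated-convergence step does apply. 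Note this is precisely where the hypothesis $s_1\ge s_0$ enters (each binomial term must have $s_0+1$ oscillation factors to spare), while $s\ge 2s_1$ plays a different role than the one you assign to it: it is what allows Theorem \ref{t3.1} and Lemma \ref{l3.1} to be invoked with $2s_1$ in place of $s_1$. Without the identity, your first implication does not close.
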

\begin{proof}\rm We first notice that by the assumption (c),
there exists $\ez\in (n\bz_1(\ro), \tz(L)).$
Recall that $\bz_1(\rho)=1/p_0(\oz)-1$ and $p_0(\oz)$
is as in the convention.
To see that (a) implies (b), by the fact $\vmor \subset\bmor$
together with Theorem \ref{t3.1}, we only need to verify
$Q_{s,\,t^m}(I-P_{2s_1,t^m})f\in \twv$. To this end, we need
to show that for all balls $B\equiv B(x_B,r_B)$,
\begin{equation}\label{3.3}
\frac{1}{\ro(|B|)|B|^{1/2}}\bigg(\int_{\widehat
B}|Q_{s,\,t^m}(I-P_{2s_1,t^m})f(x)|^2\frac{\,dx\,dt}{t}\bigg)^{1/2}\ls
\sum_{k=1}^\fz 2^{-k\ez}\dz_k(f,B),
\end{equation}
where $\ez\in(n\bz_1(\ro),\tz(L))$ and
\begin{equation*}
\dz_k(f,B)\equiv \sup_{B'\subset
2^{k+1}B:\,r_{B'}\in[r_B/2,2r_B]}\frac{1}{\ro(|B'|)|B'|^{1/2}}
\bigg(\int_{B'}|f(x)-P_{s_0,(r_{B'})^m}f(x)|^2\,dx\bigg)^{1/2}.
\end{equation*}
In fact, since $f\in\vmor\subset\bmor$, we have $\dz_k(f,B)\le
\|f\|_{\bmor}$. Moreover, for each $k\in\cn$, we have
\begin{eqnarray*}
 \lim_{c\rz 0}\sup_{B:\,r_B\le c}\dz_k(f,B)&&=\lim_{c\rz
 \fz}\sup_{B:\,r_B\ge c}\dz_k(f,B)=\lim_{c\rz \fz}\sup_{B\subset
 B(0,\,c)^\com}\dz_k(f,B)=0.
\end{eqnarray*}
Then by \eqref{3.3} and the dominated convergence
theorem for series, we have
\begin{eqnarray}\label{3.4}
&&\eta_1(Q_{s,\,t^m}(I-P_{2s_1,t^m})f)\\
&&\hs=\lim_{c\rz 0}\sup_{B:\,r_B\le c}
\frac{1}{\ro(|B|)|B|^{1/2}}\bigg(\int_{\widehat
B}|Q_{s,\,t^m}(I-P_{2s_1,t^m})f(x)|^2
\frac{\,dx\,dt}{t}\bigg)^{1/2}\nonumber\\
&&\hs\ls \sum_{k=1}^{\fz}2^{-k\ez}\lim_{c\rz
0}\sup_{B:\,r_B\le c}\dz_k(f,B)=0;\nonumber
\end{eqnarray}
thus, $\eta_1(Q_{s,\,t^m}(I-P_{2s_1,t^m})f)=0$. Similarly, we have
$\eta_2(Q_{s,\,t^m}(I-P_{2s_1,t^m})f)=0=\eta_3(Q_{s,\,t^m}(I-P_{2s_1,t^m})f)$,
which implies that $Q_{s,\,t^m}(I-P_{2s_1,t^m})f\in \twv$, and hence, (a) implies (b).

Let us now prove \eqref{3.3}. Notice that
\begin{eqnarray*}
&&\frac{1}{\ro(|B|)|B|^{1/2}}\bigg(\int_{\widehat
B}|Q_{s,\,t^m}(I-P_{2s_1,t^m})f(x)|^2\frac{\,dx\,dt}{t}\bigg)^{1/2}\\&&\quad\le
\frac{1}{\ro(|B|)|B|^{1/2}}\bigg(\int_{\widehat
B}|Q_{s,\,t^m}(I-P_{2s_1,t^m})(I-P_{s_0,\,(r_{2B})^m})f(x)|^2\frac{\,dx\,dt}{t}\bigg)^{1/2}
\\&&\quad\quad+\frac{1}{\ro(|B|)|B|^{1/2}}\bigg(\int_{\widehat
B}|Q_{s,\,t^m}(I-P_{2s_1,t^m})P_{s_0,\,(r_{2B})^m}f(x)|^2\frac{\,dx\,dt}{t}\bigg)^{1/2}\\
&&\quad \equiv \mathrm{I}+\mathrm{J}.
\end{eqnarray*}
Set $U_1(B)\equiv 2B$ and $U_k(B)\equiv 2^kB\backslash2^{k-1}B$ when $k\ge 2$.
For $k\in\cn$, let $b_k\equiv [(I-P_{s_0,\,(r_{2B})^m})f]\chi_{U_k(B)}$.
Thus, for $\mathrm{I}$, we have
\begin{eqnarray*}
  \mathrm{I}&&\le \sum_{k=1}^\fz\frac{1}{\ro(|B|)|B|^{1/2}}\bigg(\int_{\widehat
B}|Q_{s,\,t^m}(I-P_{2s_1,t^m})b_k(x)|^2\frac{\,dx\,dt}{t}\bigg)^{1/2}\equiv
\sum_{k=1}^\fz \mathrm{I}_k.
\end{eqnarray*}
When $k=1$, by \eqref{2.4}, we have
\begin{equation}\label{3.5}
\mathrm{I_1}\ls
\frac{1}{\ro(|B|)|B|^{1/2}}\|b_1\|_{L^2(\rn)}\ls
\dz_2(f,B).\end{equation} When $k\ge2$, notice that for any $x\in B$
and $y\in (2^kB)^\com$, $|x-y|\gs 2^kr_B$. Thus, by \eqref{2.3}, we
obtain
\begin{eqnarray*}|Q_{s,\,t^m}(I-P_{2s_1,t^m})b_k(x)|&&\ls \int_{U_k(B)}
\frac{t^\ez}{(t+|x-y|)^{n+\ez}}|(I-P_{s_0,(r_{2B})^m})f(y)|\,dy\\
&&\ls t^\ez
(2^kr_B)^{-n-\ez}\int_{2^kB}|(I-P_{s_0,(r_{2B})^m})f(y)|\,dy.
\end{eqnarray*}
To estimate the last term, by the argument in
\cite[pp.\,645-646]{ddy}, we have that for any ball $B(x_B, 2^kr_B)$
with $k\ge2$, there exists a collection $\{B_{k,1},B_{k,2},\cdots, B_{k,N_k}\}$
of balls such that each ball $B_{k,i}$ is
of radius $r_{2B}$, $B(x_B, 2^kr_B)\subset\cup^{N_k}_{i=1} B_{k,i}$
and $N_k \ls 2^{nk}.$ From this facts and the H\"older inequality, it follows that
\begin{eqnarray*}|Q_{s,\,t^m}(I-P_{2s_1,t^m})b_k(x)|&&\ls \sum_{i=1}^{N_k}t^\ez
(2^kr_B)^{-n-\ez}\int_{B_{k,i}}|(I-P_{s_0,(r_{2B})^m})f(y)|\,dy\\&&\ls
\bigg(\frac{t}{r_B}\bigg)^\ez 2^{-k\ez}\ro(|B|)\dz_k(f,B),
\end{eqnarray*}
which yields \begin{equation}\label{3.6}
\mathrm{I}_k\ls
\frac{1}{|B|^{1/2}} \lf(\int_{\widehat
B}t^{2\ez-1}\,dx\,dt\r)^{1/2} 2^{-k\ez}(r_B)^{-\ez}\dz_k(f,B)\ls
2^{-k\ez}\dz_k(f,B).\end{equation} Combining \eqref{3.5} and
\eqref{3.6}, we obtain $\mathrm{I}\ls\sum_{k=1}^\fz
2^{-k\ez}\dz_k(f,B).$

 To estimate the term $\mathrm J$, write $t_{2B}\equiv (r_{2B})^m$ and
\begin{eqnarray*}
  &&I-P_{2s_1,t^m}\\&&\quad=(I-e^{-t_{2B}L}+e^{-t_{2B}L}-e^{-t^mL})^{2s_1+1}\\
  &&\quad=\bigg(\sum_{k=0}^{s_1}C_{2s_1+1}^k
  (-1)^ke^{-kt^mL}(I-P_{2s_1-s_0-k-1,t_{2B}})(I-P_{k-1,t_{2B}-t^m})
  \bigg)(I-P_{s_0,t_{2B}})
  \\&&\quad\quad+\bigg(\sum_{k=s_1+1}^{2s_1}C_{2s_1+1}^k
  (-1)^ke^{-kt^mL}(I-P_{2s_1-k,t_{2B}})(I-P_{k-s_0-2,t_{2B}-t^m})\bigg)
  (I-P_{s_0,t_{2B}-t^m})\\
  &&\quad\equiv
  \Psi_1(L)(I-P_{s_0,t_{2B}})+\Psi_2(L)(I-P_{s_0,t_{2B}-t^m}),
\end{eqnarray*}
where $t\in(0,r_B)$ and $C_{2s_1+1}^k$ denotes the combinatorial
number. Further, by \eqref{2.8}, we see that $k_{t,r_B}$, the kernel
of $Q_{s,t^m}P_{s_0,(r_{2B})^m}\Psi_1(L)$, satisfies that for all
$x,\,y\in\rn$,
\begin{equation*}
|k_{t,r_B}(x,y)|\ls
\bigg(\frac{t}{r_B}\bigg)^{m(s+1)}\frac{(r_B)^\ez}{(r_B+|x-y|)^{n+\ez}}.
\end{equation*}
By this and some computation similar to the estimate for $\mathrm{I}_k$,
we obtain that for all $x\in B$,
\begin{eqnarray*}
&&|Q_{s,\,t^m}P_{s_0,\,(r_{2B})^m}\Psi_1(L)(I-P_{s_0,(r_{2B})^m})f(x)|\\
&&\quad\ls\sum_{k=1}^\fz |Q_{s,\,t^m}P_{s_0,\,(r_{2B})^m}\Psi_1(L)
\lf[\chi_{U_k(B)}(I-P_{s_0,(r_{2B})^m})f\r](x)|\\
&&\quad\ls\bigg(\frac{t}{r_B}\bigg)^{m(s+1)}\sum_{k=1}^\fz2^{-k\ez}\ro(|B|)\dz_k(f,B).
\end{eqnarray*}
Similarly, we have that for all $x\in B$,
$$|Q_{s,\,t^m}P_{s_0,\,(r_{2B})^m}\Psi_2(L)(I-P_{s_0,(r_{2B})^m-t^m})f(x)|\ls
\bigg(\frac{t}{r_B}\bigg)^{m(s+1)}\sum_{k=1}^\fz2^{-k\ez}\ro(|B|)\dz_k(f,B).$$
The above two estimates yield
\begin{equation*}\mathrm{J}\ls
\frac{1}{|B|^{1/2}}\bigg[\int_{\widehat
B}\bigg(\frac{t}{r_B}\bigg)^{2m(s+1)}\frac{\,dx\,dt}{t}\bigg]^{1/2}
\sum_{k=1}^\fz2^{-k\ez}\dz_k(f,B)\ls\sum_{k=1}^\fz2^{-k\ez}\dz_k(f,B).
\end{equation*}
The estimates for $\mathrm I$ and $\mathrm J$ give \eqref{3.3}. Thus (a) implies (b).

Conversely, if (b) holds, by Theorem \ref{t3.1}, we see that $f\in \bmor$.
Notice that
\begin{eqnarray*} &&\frac{1}{\ro(|B|)|B|^{1/2}}
\bigg(\int_{B}|f(x)-P_{s_0,(r_B)^m}f(x)|^2\,dx\bigg)^{1/2}\\
&&\quad\quad\quad\quad=\sup_{\|g\|_{L^2(B)}\le
1}\frac{1}{\ro(|B|)|B|^{1/2}}\bigg|\int_B
f(x)(I-P^\ast_{s_0,(r_B)^m})g(x)\,dx\bigg|\nonumber.
\end{eqnarray*}
Then by Lemma \ref{l3.1}, we obtain
\begin{eqnarray*}&&\bigg|\int_B
f(x)(I-P^\ast_{s_0,(r_B)^m})g(x)\,dx\bigg|\\&&\quad
=\bigg|C_{m,s,2s_1}\int_{\rnz}Q_{s,t^m}(I-P_{2s_1,t^m})
f(x)Q^\ast_{t^m}(I-P^\ast_{s_0,(r_B)^m})g(x)\frac{\,dx\,dt}{t}\bigg|\\
&&\quad \ls
\int_{\widehat{4B}}|Q_{s,t^m}(I-P_{2s_1,t^m})f(x)Q^\ast_{t^m}
(I-P^\ast_{s_0,(r_B)^m})g(x)|
\frac{\,dx\,dt}{t}+\sum_{k=2}^\fz\int_{\widehat{2^{k+1}B}
\backslash\widehat{2^{k}B}}\cdots\\
&&\quad \equiv \mathrm{A}_1+\sum_{k=2}^\fz \mathrm{A}_k.
\end{eqnarray*}
Then the H\"older inequality and \eqref{2.4} yield
\begin{eqnarray*}
  \mathrm{A}_1&&\le
  \bigg(\int_{\widehat{4B}}|Q_{s,t^m}(I-P_{2s_1,t^m})f(x)|^2
  \frac{\,dx\,dt}{t}\bigg)^{1/2}
\bigg(\int_{\widehat{4B}}|Q^\ast_{t^m}(I-P^\ast_{s_0,(r_B)^m})
g(x)|^2\frac{\,dx\,dt}{t}\bigg)^{1/2}\\
&&\ls\bigg(\int_{\widehat{4B}}|Q_{s,t^m}(I-P_{2s_1,t^m})f(x)|^2
\frac{\,dx\,dt}{t}\bigg)^{1/2}.
\end{eqnarray*}
When $k\ge2$, notice that
\begin{eqnarray*}&&Q^\ast_{t^m}(I-P^\ast_{s_0,(r_B)^m})g=t^mL^\ast
e^{-t^mL^\ast}(I-e^{-(r_B)^mL^\ast})^{s_0+1}g\\
&&=\int_0^{(r_B)^m}\cdots\int_0^{(r_B)^m}\frac{t^m}{(t^m+r_1+\cdots+r_{s_0+1})^{s_0+2}}
Q^\ast_{s_0+1,t^m+r_1+\cdots+r_{s_0+1}}g\,dr_1\cdots dr_{s_0+1}.
\end{eqnarray*}
From \eqref{2.3}, it is easy to deduce that
$q_{s_0+1,t^m+r_1+\cdots+r_{s_0+1}}$, the kernel of
$Q^\ast_{s_0+1,t^m+r_1+\cdots+r_{s_0+1}}$, satisfies that for all
$x,\,y\in\rn$,
$$|q_{s_0+1,t^m+r_1+\cdots+r_{s_0+1}}(x,y)|\ls\frac{(t^m+r_1+
\cdots+r_{s_0+1})^{\ez/m}}{(t+|x-y|)^{n+\ez}},$$ where $\ez\in
(n\bz_1(\ro), \tz(L))$. Since $(x,t)\in
\widehat{2^{k+1}B}\backslash\widehat{2^{k}B}$ and $y\in B$, we have
$|x-y|+t\sim 2^kr_B$. Moreover, notice that $\ez>n\bz_1(\ro)$, which implies
that $\ez'\equiv(\ez-n\bz_1(\ro))/2>0$. By the arithmetic-geometric
inequality and the H\"older inequality, we have that for all $(x,t)\in
\widehat{2^{k+1}B}\backslash\widehat{2^{k}B}$,
\begin{eqnarray*}
&&|Q^\ast_{t^m}(I-P^\ast_{s_0,(r_B)^m})g(x)|\\
&&\hs\ls
\int_0^{(r_B)^m}\cdots\int_0^{(r_B)^m}\int_B\frac{t^m(t^m+r_1+
\cdots+r_{s_0+1})^{\ez/m}}{(t^m+r_1+\cdots+r_{s_0+1})^{s_0+2}(t+|x-y|)^{n+\ez}}\\
&&\hs\hs\times|g(y)|dy\,dr_1\cdots
dr_{s_0+1}\\
&&\hs\ls \frac{\|g\|_{L^1(B)}}{(2^kr_B)^{n+\ez}}\int_0^{(r_B)^m}\cdots
\int_0^{(r_B)^m}t^{m-m(1-\ez'/m)}
(r_1\cdots r_{s_0+1})^{-1+\frac{\ez-\ez'}{m(s_0+1)}} \,dr_1\cdots
dr_{s_0+1}\\
&&\hs\ls (2^kr_B)^{-n-\ez}|B|^{1/2}t^{\ez'}(r_B)^{\ez-\ez'},
\end{eqnarray*}
which implies that
\begin{eqnarray*}
&&\bigg(\int_{\widehat{2^{k+1}B}\backslash\widehat{2^{k}B}}|Q^\ast_{t^m}
(I-P^\ast_{s_0,(r_B)^m})g(x)|^2\frac{\,dx\,dt}{t}\bigg)^{1/2}\ls
2^{-k(n/2+\ez-\ez')}.
\end{eqnarray*}
From this together with the H\"older inequality, it follows that
\begin{eqnarray*}\mathrm{A}_k&&\ls
 2^{-k(n/2+\ez-\ez')}\bigg(\int_{\widehat{2^{k+1}B}\backslash
 \widehat{2^{k}B}}|Q_{s,t^m}
  (I-P_{2s_1,t^m})f(x)|^2\frac{\,dx\,dt}{t}\bigg)^{1/2}\\
  &&\ls 2^{-k(\ez-n\bz_1(\ro))/2}\frac{|B|^{1/2}\ro(|B|)}{|2^kB|^{1/2}\ro(|2^kB|)}
\bigg(\int_{\widehat{2^{k+1}B}\backslash\widehat{2^{k}B}}|Q_{s,t^m}
  (I-P_{2s_1,t^m})f(x)|^2\frac{\,dx\,dt}{t}\bigg)^{1/2}.
\end{eqnarray*}
Combine the estimates of $\mathrm{A}_k$, we finally obtain that
\begin{eqnarray*}
&&\frac{1}{\ro(|B|)|B|^{1/2}}
\bigg(\int_{B}|f(x)-P_{s_0,(r_B)^m}f(x)|^2\,dx\bigg)^{1/2}\ls
\sum_{k=1}^\fz 2^{-k(\ez-n\bz_1(\ro))/2}\sz_k(f,B),
\end{eqnarray*}
where
$$\sz_k(f,B)\equiv\frac{1}{|2^kB|^{1/2}\ro(|2^kB|)}
\bigg(\int_{\widehat{2^{k+1}B}\backslash\widehat{2^{k}B}}|Q_{s,t^m}
  (I-P_{2s_1,t^m})f(x)|^2\frac{\,dx\,dt}{t}\bigg)^{1/2}.$$
Since $Q_{s,t^m}(I-P_{2s_1,t^m})f\in \twv$, by Proposition \ref{p3.1},
for each $k\in \cn$, we have
\begin{eqnarray*}
 \lim_{c\rz 0}\sup_{B:\,r_B\le c}\sz_k(f,B)&&=\lim_{c\rz
 \fz}\sup_{B:\,r_B\ge c}\sz_k(f,B)=\lim_{c\rz \fz}\sup_{B\subset
 B(0,\,c)^\com}\sz_k(f,B)=0.
\end{eqnarray*}
Then similarly to the proof of \eqref{3.4}, we obtain that
$\gz_1(f)=\gz_2(f)=\gz_3(f)=0$, which implies that
$f\in \vmor$. This finishes the proof of Theorem \ref{t3.2}.
\end{proof}

\begin{rem}\label{r3.1}
\rm From Theorem \ref{t3.2}, it is easy to deduce
that if $s\ge s_0\ge\lfr\frac nm(\frac{1}{\wz p_0(\oz)}-1)\rf$, then
$\vmor=\vvmo$. Hence, in what follows, we denote the space
$\vmor$ simply by $\vmo.$
\end{rem}

Now, let $L=\Delta$ be the Laplacian operator on $\rn$, $p\in (0,1]$ and
$\oz(t)=t^p$ for all $t\in (0,\fz)$.
It was proved in \cite{jyz} that the space $\bmor$ coincides
with the space $\Lambda_{n(1/p-1)}(\rn)$. We now show that the
space $\vmo$ and the space $\lz_{n(1/p-1)}(\rn)$ coincide.

Recall that the space $\lz_{n(1/p-1)}(\rn)$ is defined to be the
closure of $\cs(\rn)$ (the space of Schwartz functions) in the norm
of the Lipschitz space $\Lambda_{n(1/p-1)}(\rn)$; see \cite{ja,pe,wws}.
Let $\psi\in\cs(\rn)$ and $\int_{\rn} \psi(x)x^\nu\,dx=0$ for all
$\nu\in \zz_+^{n}$, $|\nu|\le \lfr n(1/p-1)\rf$ and
$\psi_t(x)=t^{-n}\psi(x/t)$ for all $x\in\rn$ and $t\in(0,\fz)$.
Wang \cite{wws} established the following characterization of the space
$\lz_{n(1/p-1)}(\rn)$.
\begin{thm}\label{t3.3}
Let $p\in(0,1]$ and $\psi$ be as above. A function
$f\in \lz_{n(1/p-1)}(\rn)$ if and only if $f*\psi_t\in
\twz$ and $\eta_1(f*\psi_t)=0$, where $\eta_1(f*\psi_t)$
is defined as in \eqref{3.1}.
\end{thm}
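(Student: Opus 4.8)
The plan is to reduce the theorem to two classical facts about the homogeneous Lipschitz space: a Carleson-measure (area-integral) characterization of $\Lambda_\az(\rn)$ with $\az\equiv n(1/p-1)$, and an approximation argument that matches the closure $\lz_\az(\rn)$ with the single small-scale condition $\eta_1=0$. Since $\oz(t)=t^p$, the function $\ro$ in \eqref{2.10} obeys $\ro(|B|)\sim r_B^{\az}$, so that $f*\psi_t\in\twz$ means exactly
$$\sup_{B}\frac{1}{r_B^{\az}}\lf(\frac1{|B|}\int_{\widehat B}|f*\psi_t(y)|^2\frac{\,dy\,dt}{t}\r)^{1/2}<\fz,$$
which is the standard square-function description of $\Lambda_\az(\rn)$.

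First I would prove the norm equivalence $\|f\|_{\Lambda_\az(\rn)}\sim\|f*\psi_t\|_{\twz}$, modulo polynomials, valid because $\psi$ has vanishing moments up to order $\lfr\az\rf$. The two ingredients are a Calder\'on reproducing formula $f=c\int_0^\fz\wz\psi_t*\psi_t*f\,\frac{dt}{t}$ for a suitable $\wz\psi\in\cs(\rn)$ with vanishing moments, together with the almost-orthogonality estimate that $\psi_s*\wz\psi_t$ is, up to rapidly decaying spatial tails, of size $(\min\{s/t,t/s\})^{M}$, which localizes the $t$-integration to $t\sim s$. This already shows that in both directions of the theorem the underlying $f$ lies in $\Lambda_\az(\rn)$, and reduces matters to identifying $\lz_\az(\rn)$ through the vanishing of $\eta_1$.

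For the necessity half, given $f\in\lz_\az(\rn)$ I choose $f_j\in\cs(\rn)$ with $\|f-f_j\|_{\Lambda_\az(\rn)}\to0$. Using the moment conditions on $\psi$ and a Taylor expansion of $f_j$, one has $|f_j*\psi_t(y)|\ls t^{\lfr\az\rf+1}$ for small $t$, so the small-ball Carleson average defining $\eta_1$ is $\ls r_B^{\lfr\az\rf+1-\az}\to0$; hence $\eta_1(f_j*\psi_t)=0$. Because each small-ball average is a seminorm dominated by the full $\twz$-norm, subadditivity gives $\eta_1(f*\psi_t)\le\eta_1(f_j*\psi_t)+\|(f-f_j)*\psi_t\|_{\twz}$, and the last term is $\ls\|f-f_j\|_{\Lambda_\az(\rn)}\to0$ by the norm equivalence; thus $\eta_1(f*\psi_t)=0$.

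The main obstacle is sufficiency: from $f*\psi_t\in\twz$ and $\eta_1(f*\psi_t)=0$ one must produce Schwartz functions approximating $f$ in $\Lambda_\az(\rn)$. I would use the truncated reproducing formula $f_\dz\equiv c\int_\dz^\fz\wz\psi_t*\psi_t*f\,\frac{dt}{t}$, regularized by an extra large-scale and spatial cutoff so that it lies in $\cs(\rn)$, and estimate $\|f-f_\dz\|_{\Lambda_\az(\rn)}\sim\|(f-f_\dz)*\psi_s\|_{\twz}$ via the equivalence above. The almost-orthogonality estimate forces $(f-f_\dz)*\psi_s$ to concentrate at scales $s\ls\dz$, so its Carleson average over any ball is controlled, after subdividing the base of a large tent into balls of radius $\sim\dz$, by $\sup_{r_{B'}\ls\dz}$ of the small-ball average of $f*\psi_t$, which tends to $0$ with $\dz$ precisely because $\eta_1(f*\psi_t)=0$. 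The two delicate points are this reduction of the height-truncated average over an arbitrary ball to genuine small-ball averages, and the verification that only $\eta_1$ (and not also $\eta_2,\eta_3$) is required; the latter reflects the dilation and translation invariance of the homogeneous space $\Lambda_\az(\rn)$ in the present case $L=\Delta$, and is exactly the feature that is lost for a general operator, where all three vanishing conditions reappear.
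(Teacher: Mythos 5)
Your necessity argument is sound, but the sufficiency half has a genuine gap, and it sits exactly at the step you compress into one clause: ``regularized by an extra large-scale and spatial cutoff so that it lies in $\cs(\rn)$.'' The truncated reproducing formula, almost-orthogonality, and the covering of a large tent base by balls of radius $\sim\dz$ do let you conclude from $\eta_1(f*\psi_t)=0$ that $\|f-f_\dz\|_{\Lambda_\az(\rn)}\to 0$ with $\az\equiv n(1/p-1)$; but $f_\dz$ is merely smooth at scales below $\dz$, with no decay at infinity, and the error produced by a spatial cutoff is \emph{not} controlled by $\eta_1$: making a truncation small in $\Lambda_\az(\rn)$ requires precisely the vanishing of oscillation over large balls and over balls escaping to infinity, i.e. $\eta_2$- and $\eta_3$-type information. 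This step cannot be repaired from $\eta_1$ alone. Indeed, take $\az\in(0,1)$ and $f(x)\equiv(1+|x|^2)^{\az/2}$. Then $\|\nabla f\|_{L^\fz(\rn)}\le\az$ and $f\in\Lambda_\az(\rn)$, so, using only $\int_\rn\psi(x)\,dx=0$, one gets $|f*\psi_t(y)|\ls\min\{t,t^\az\}$ for all $y$ and $t$; hence $f*\psi_t\in\twz$, and the averages over balls with $r_B\le 1$ are $\ls r_B^{1-\az}$, so $\eta_1(f*\psi_t)=0$. Yet for every $g\in\cs(\rn)$ and every constant $c$, testing the difference quotient at the pairs $x=Re_1$, $y=2Re_1$ (with $e_1\equiv(1,0,\cdots,0)$) and letting $R\to\fz$ gives $\|f-g-c\|_{\Lambda_\az(\rn)}\ge 2^\az-1>0$, so $f$ is not in the closure of $\cs(\rn)$, i.e. $f\notin\lz_\az(\rn)$. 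So your closing assertion that dilation and translation invariance make $\eta_2,\eta_3$ superfluous is the unjustified step, and no argument can close it: the hypotheses one can actually work with are $f*\psi_t\in\twv$, that is, all three vanishing conditions.

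On the comparison you were asked to make: the paper contains no proof of Theorem \ref{t3.3} at all; it is quoted as a known result of Wang \cite{wws}, so there is no internal argument to measure your proposal against, and the single-condition form in which it is quoted is itself delicate for the reason above. Note, however, that this does not affect the paper's only use of the theorem: in the proof of Corollary \ref{c3.1}, the function $Q_{s,\,t^m}(I-P_{2s_1,t^m})f$ is known (via Theorem \ref{t3.2} and Proposition \ref{p3.1}) to lie in the full vanishing tent space $\twv$, so all three conditions $\eta_1=\eta_2=\eta_3=0$ are available there, and only the three-condition form of Wang's characterization is ever needed.
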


The following result is deduced from Theorems \ref{t3.2} and
\ref{t3.3}.
\begin{cor}\label{c3.1}
Let $L=\Delta$, $p\in (0,1]$ and $\oz(t)=t^p$ for all  $t\in (0,\fz)$.
Then the space $\vmo$ coincides with the space $\lz_{n(1/p-1)}(\rn)$ with
equivalent norms.
\end{cor}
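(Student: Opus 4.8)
The plan is to read off both spaces from the \emph{same} tent-space object and to match the two characterizations in Theorems \ref{t3.2} and \ref{t3.3}. First I would record the simplifications in the case $L=\Delta$, $\oz(t)=t^p$: here $m=2$ and $\tz(L)=\fz$, the critical lower type index is $\wz p_0(\oz)=p$, and $\ro(t)=t^{1/p-1}$, so $\ro(|B|)\sim(r_B)^{n(1/p-1)}$; moreover $\bmor=\Lambda_{n(1/p-1)}(\rn)$ with equivalent norms (the result of \cite{jyz} quoted above) and $\vmo=\vmor$ by Remark \ref{r3.1}. Since $\Delta$ is a Fourier multiplier, the operator in Theorem \ref{t3.2} is a convolution: if $\widehat\psi(\xi)\equiv(c|\xi|^2)^{s+1}e^{-c|\xi|^2}(1-e^{-c|\xi|^2})^{2s_1+1}$, where $c|\xi|^2$ is the symbol of $\Delta$, then comparing symbols shows $Q_{s,\,t^2}(I-P_{2s_1,t^2})f=f\ast\psi_t$ with $\psi_t(x)=t^{-n}\psi(x/t)$. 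The function $\psi$ is Schwartz, and since $\widehat\psi$ vanishes to order $2(s+2s_1+2)$ at the origin while $s\ge 2s_1\ge 2s_0\ge 2\lfr\frac n2(\frac1p-1)\rf$, it has vanishing moments $\int_\rn\psi(x)x^\nu\,dx=0$ for all $|\nu|\le\lfr n(1/p-1)\rf$; thus $\psi$ is admissible in Theorem \ref{t3.3}.

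With this identification the inclusion $\vmo\subset\lz_{n(1/p-1)}(\rn)$ is immediate. If $f\in\vmo=\vmor$, then Theorem \ref{t3.2} (with $m=2$) gives $f\in\cm(\rn)$ and $f\ast\psi_t=Q_{s,\,t^2}(I-P_{2s_1,t^2})f\in\twv$; by Proposition \ref{p3.1}, $\twv=\tw0$, so in particular $f\ast\psi_t\in\twz$ and $\eta_1(f\ast\psi_t)=0$, and Theorem \ref{t3.3} then yields $f\in\lz_{n(1/p-1)}(\rn)$. The norm comparison $\|f\|_{\lz_{n(1/p-1)}(\rn)}=\|f\|_{\Lambda_{n(1/p-1)}(\rn)}\sim\|f\|_{\bmor}=\|f\|_{\vmo}$ follows from $\bmor=\Lambda_{n(1/p-1)}(\rn)$.

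For the reverse inclusion, let $f\in\lz_{n(1/p-1)}(\rn)$. Then $f\in\Lambda_{n(1/p-1)}(\rn)=\bmor\subset\cm(\rn)$, and Theorem \ref{t3.3} gives $f\ast\psi_t\in\twz$ with $\eta_1(f\ast\psi_t)=0$; by Theorem \ref{t3.2} it remains only to show $\eta_2(f\ast\psi_t)=\eta_3(f\ast\psi_t)=0$, since these together with $\eta_1(f\ast\psi_t)=0$ put $f\ast\psi_t$ into $\tw0=\twv$. I would argue by approximation. By definition $\lz_{n(1/p-1)}(\rn)$ is the closure of $\cs(\rn)$ in the norm of $\Lambda_{n(1/p-1)}(\rn)=\bmor$, so pick $\fai_j\in\cs(\rn)$ with $\|f-\fai_j\|_{\bmor}\rz 0$. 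Each $\eta_i$ is subadditive and dominated by $\|\cdot\|_{\twz}$, so, using Theorem \ref{t3.1},
\begin{equation*}
\eta_i(f\ast\psi_t)\le\|(f-\fai_j)\ast\psi_t\|_{\twz}+\eta_i(\fai_j\ast\psi_t)\ls\|f-\fai_j\|_{\bmor}+\eta_i(\fai_j\ast\psi_t),\qquad i=2,3.
\end{equation*}
Thus the whole matter reduces to the claim $\cs(\rn)\subset\vmo$, equivalently $\eta_2(\fai\ast\psi_t)=\eta_3(\fai\ast\psi_t)=0$ for every $\fai\in\cs(\rn)$; granting this and letting $j\rz\fz$ gives $\eta_2(f\ast\psi_t)=\eta_3(f\ast\psi_t)=0$, hence $f\in\vmo$, with the norm equivalence once more supplied by $\bmor=\Lambda_{n(1/p-1)}(\rn)$.

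The hard part is exactly this claim $\cs(\rn)\subset\vmo$, which I would verify directly from Definition \ref{dp3.1} by checking $\gz_1(\fai)=\gz_2(\fai)=\gz_3(\fai)=0$ for $\fai\in\cs(\rn)$ via the heat-semigroup estimate \eqref{2.3} with $m=2$. Writing $\fai-P_{s_0,(r_B)^2}\fai=(I-e^{-(r_B)^2\Delta})^{s_0+1}\fai$, smoothness of $\fai$ gives $\|\fai-P_{s_0,(r_B)^2}\fai\|_{L^\fz}\ls (r_B)^{2(s_0+1)}$ for small $r_B$, which beats $1/\ro(|B|)\sim(r_B)^{-n(1/p-1)}$ because $2(s_0+1)>n(1/p-1)$, forcing $\gz_1(\fai)=0$; for large $r_B$ the uniform bound $\|e^{-k(r_B)^2\Delta}\fai\|_{L^\fz}\ls (r_B)^{-n}\|\fai\|_{L^1(\rn)}$ gives a factor $\ls(r_B)^{-n/p}\rz0$, whence $\gz_2(\fai)=0$; and for balls escaping to infinity the rapid spatial decay of $\fai$ together with the Gaussian decay of the kernels in \eqref{2.3} makes the relevant local integrals negligible, giving $\gz_3(\fai)=0$. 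This is the only genuinely computational point; the rest is bookkeeping between the two characterizations, and combining the two inclusions with the norm comparisons proves that $\vmo$ and $\lz_{n(1/p-1)}(\rn)$ coincide with equivalent norms.
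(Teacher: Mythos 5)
Your proposal is correct and follows the same two-step strategy as the paper. The forward inclusion is essentially identical to the paper's: Theorem \ref{t3.2} and Proposition \ref{p3.1} feed into Theorem \ref{t3.3}, and your Fourier-side identification of $Q_{s,\,t^2}(I-P_{2s_1,t^2})f$ as $f\ast\psi_t$ with $\psi$ Schwartz and of vanishing moments is just a more explicit version of the paper's remark that $I-P_{2s_1,t^2}$ annihilates polynomials of degree at most $2s_1$. In the reverse inclusion you make one genuinely different choice: the paper reduces to proving $C_c^\fz(\rn)\subset\vmo$, which costs it an unproved density assertion ($C_c^\fz(\rn)$ dense in $\lz_{n(1/p-1)}(\rn)$, stated as ``easy to show'') but makes $\gz_3$ easy, since compact support makes $f$ and its Taylor polynomial vanish identically on far-away balls; you reduce instead to $\cs(\rn)\subset\vmo$, where density is definitional, but then $\gz_3$ becomes the hardest point and your treatment of it is only a sketch. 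To complete it one must handle balls $B\subset[B(0,c)]^\com$ of \emph{all} radii uniformly: for $r_B$ below a threshold $R_0$ you need a localized form of your estimate $\|(I-e^{-(r_B)^2\Delta})^{s_0+1}\fai\|_{L^\fz(B)}\ls(r_B)^{2(s_0+1)}$ carrying an additional factor that decays in $\dist(B,0)$ (available from the rapid decay of $\Delta^{s_0+1}\fai$ together with the Gaussian tails of the heat kernel, the relevant times staying bounded by $(s_0+1)R_0^2$), so as to beat the blow-up $[\ro(|B|)]^{-1}\sim(r_B)^{-n(1/p-1)}$ for tiny $r_B$ and still gain smallness as $c\to\fz$; radii above $R_0$ are handled as in $\gz_2$. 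This does go through, and it is exactly the computation the paper carries out in the easier compactly supported setting, so your route trades the paper's density gap for a more delicate tail estimate. One minor slip: in your $\gz_2$ argument, the $k=0$ term in the binomial expansion of $(I-e^{-(r_B)^2\Delta})^{s_0+1}\fai$ is $\fai$ itself, which is not $O((r_B)^{-n})$ in $L^\fz$; that term needs the $L^2$ bound $\|\fai\|_{L^2}^2/(|B|[\ro(|B|)]^2)\to0$ (which is how the paper argues), and this matters in particular when $p=1$, where $\ro\equiv1$ and your $L^\fz$ reasoning alone yields no decay.
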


\begin{proof}\rm
Since $L=\Delta$, we then have $m=2$ and $\tz(L)=\fz$.
Let $s_1\ge \lfr \frac n2(\frac{1}{p}-1)\rf$,
$s\ge 2(s_1+1)$ and $f\in \vmo$. By Theorem \ref{t3.2}, we have
$Q_{s,\,t^m}(I-P_{2s_1,t^m})f\in \twv\subset \twz$, which together with
Proposition \ref{p3.1} further implies that
$$\eta_1(Q_{s,\,t^m}(I-P_{2s_1,t^m})f)=0.$$
Moreover, if we let $\ell\in\zz_+$ and $\cp_\ell$
denote the set of all polynomials with degree no more
than $\ell$, then $(I-P_{2s_1,t^m})(g)=0$ for all $g\in \cp_{2s_1}(\rn)$.
Thus, by Theorem \ref{t3.3}, we have $\vmo\subset \lz_{n(1/p-1)}(\rn)$.

Conversely, we first point out that it is easy to show that the space
$C_c^\fz(\rn)$ (the space of all $C^\fz(\rn)$ functions with
compact support) is dense in $\lz_{n(1/p-1)}(\rn)$. To prove
$\lz_{n(1/p-1)}(\rn)\subset \vmo$, by the completeness of these spaces, it suffices
to verify that $C_c^\fz(\rn)\subset \vmo$. Suppose that  $f\in
C_c^\fz(\rn)$. Then $f\in \Lambda_{n(1/p-1)}(\rn)=\bmor$. Thus, we only need to
show that $\gz_1(f)=\gz_2(f)=\gz_3(f)=0$.

Since $\tz(L)=\fz$, we can take $s\ge \lfr n(\frac{1}{p}-1)\rf$ and $\ez\in(s+1,\fz)$.
 Then for any $P_s\in\cps$ and any ball $B\equiv B(x_B,r_B)\subset \rn$, we have
$$\int_B |f(x)-P_{s,(r_B)^m}f(x)|^2\,dx=
\int_B |(I-P_{s,(r_B)^m})(f-P_s)(x)|^2\,dx.$$
For any $x\in\rn$, write
$$f(x)=\sum_{|\nu|\le s}\frac{D^\nu(f)(x_B)}{\nu!}(x-x_B)^\nu+\sum_{|\nu|=s+1}
\frac{D^\nu(f)(y_\nu)}{\nu!}(x-x_B)^\nu,$$
where for $\nu=(\nu_1,\cdots,\nu_n)\in(\zz_+)^n$,
$D^\nu=(\frac\partial{\partial x_1})^{\nu_1}\cdots
(\frac\partial{\partial x_n})^{\nu_n}$
and $y_\nu=\mu x+(1-\mu)x_B$ for certain $\mu\in (0,1)$.
Let
\begin{equation}\label{3.7}
P_s(x)=\sum_{|\nu|\le s}\frac{D^\nu(f)(x_B)}{\nu!}(x-x_B)^\nu.
\end{equation}
By \eqref{2.8}, $\ez>s+1$ and $f\in C_c^\fz(\rn)$, we have that
$$\int_B|f(x)-P_s(x)|^2\,dx\ls \sum_{|\nu|=s+1}
\int_B |(x-x_B)^\nu|^2\,dx\ls (r_B)^{2s+n+2},$$
and that for any $x\in B$,
\begin{eqnarray*}
  &&|P_{s,(r_B)^m}(f-P_s)(x)|\\
  &&\hs\ls (r_B)^{-n}\int_{2B}|(f-P_s)(y)|\,dy+
  \sum_{k=1}^\fz\int_{2^{k+1}B\backslash 2^kB} \frac{(r_B)^\ez}
  {(2^kr_B)^{n+\ez}}|(f-P_s)(y)|\,dy\\
  &&\hs\ls (r_B)^{s+1}+\sum_{k=1}^\fz \frac{(r_B)^\ez}{(2^kr_B)^{n+\ez}}
  \sum_{|\nu|=s+1}\int_{2^{k+1}B\backslash 2^kB}|(y-x_B)^\nu|\,dy\\
  &&\hs\ls (r_B)^{s+1}+\sum_{k=1}^\fz
  \frac{(r_B)^\ez}{(2^kr_B)^{n+\ez}} (2^{k+1}r_B)^{n+s+1}\ls (r_B)^{s+1},
   \end{eqnarray*}
which, together the fact that $s\ge \lfr n(\frac{1}{p}-1)\rf$, gives
$$\lim_{c\rightarrow 0}\sup_{B:\,r_B\le c}\frac{1}{|B|[\ro(|B|)]^2}\int_B
|f(x)-P_{s,(r_B)^m}f(x)|^2\,dx
\ls \lim_{r_B\rightarrow 0}\frac{1}{|B|^{2/p-1}}(r_B)^{2s+2+n}=0.$$
Thus $\gz_1(f)=0$.

To see $\gz_2(f)=0$, using $f\in C_c^\fz(\rn)$ and \eqref{2.8}, we
have that for any $t>0$,
$$\|P_{s,t^m}f\|_{L^2(\rn)}\ls \|f\|_{L^2(\rn)}.$$
Thus
\begin{eqnarray*}
\gz_2(f)&&=\lim_{c\rightarrow \fz}\sup_{B:\,r_B\ge c}
\frac{1}{|B|[\ro(|B|)]^2}\int_B|f(x)-P_{s,(r_B)^m}f(x)|^2\,dx\\
&&\ls \lim_{c\rightarrow \fz}\sup_{B:\,r_B\ge c}
\frac{1}{|B|[\ro(|B|)]^2}\|f\|^2_{L^2(\rn)}=0.
\end{eqnarray*}

Let us show that $\gz_3(f)=0$. By $\gz_2(f)=0$, we know that
for any $\bz>0$, there exists $R_0>0$ such that if $r_B\ge R_0$, then
$$\frac{1}{|B|[\ro(|B|)]^2}\int_B|f(x)-P_{s,(r_B)^m}f(x)|^2\,dx<\bz.$$
Thus, we only need to consider the case when $r_B\in (0,R_0)$. Since
$f\in C_c^\fz(\rn)$, then there exists $r_0>0$ such that
 $\supp f\subset B(0,r_0)$. Let $k_0\in\cn$ such that $2^{-k_0(\ez-s-1)}<\bz$.
 Taking $c>2^{k_0}(r_0+R_0)$, we deduce that for any ball
 $B\subset (B(0,c))^{\com}$ with $r_B<R_0$, $(2^{k_0}B)\cap B(0,r_0)=\emptyset.$
From this and \eqref{2.8}, we deduce that for any ball
 $B\subset (B(0,c))^{\com}$ with $r_B\in [1,R_0)$ and $x\in B$,
$$|P_{s,(r_B)^m}f(x)|\ls \int_{B(0,r_0)}\frac{(r_B)^\ez}{(2^{k_0}r_B)^{n+\ez}}
|f(y)|\,dy\ls (r_B)^{-n}2^{-k_0(n+\ez)}\|f\|_{L^1(\rn)},$$
which implies that
$$\frac{1}{|B|[\ro(|B|)]^2}\int_B|f(x)-P_{s,(r_B)^m}f(x)|^2\,dx\ls
\int_B|P_{s,(r_B)^m}f(x)|^2\,dx\ls 2^{-2k_0(n+\ez)}\ls \bz.$$

When $B\equiv B(x_B,r_B)\subset (B(0,c))^{\com}$ with $r_B\in(0,1)$,
by $\ez>s+1$ and $(2^{k_0}B)\cap B(0,r_0)=\emptyset$,
similarly to the proof of $\gz_1(f)=0$ with $P_s$ as in \eqref{3.7},
we have that for any $x\in B$,
\begin{eqnarray*}
|f(x)-P_{s,(r_B)^m}f(x)|&&=|(I-P_{s,(r_B)^m})(f-P_s)(x)|\\
&&\ls \sum_{k=k_0+1}^\fz
\sum_{|\nu|=s+1}\int_{2^kB\backslash 2^{k-1}B}\frac{(r_B)^\ez}
{(2^kr_B)^{n+\ez}}|(y-x_B)^\nu|\,dy\\
&&\ls \sum_{k=k_0+1}^\fz 2^{-k(n+\ez)}(r_B)^{-n}(2^{k}r_B)^{n+s+1}
\ls 2^{-k_0(\ez-s-1)}(r_B)^{s+1},
\end{eqnarray*}
which, together with $s\ge \lfr n(1/p-1)\rf$, gives
$$\frac{1}{|B|[\ro(|B|)]^2}\int_B|f(x)-P_{s,(r_B)^m}f(x)|^2\,dx\ls
2^{-k_0(\ez-s-1)}\frac{(r_B)^{2s+n+2}}{|B|^{2/p-1}}\ls 2^{-2k_0(\ez-s-1)}\ls \bz.$$
Hence, for any ball $B\subset (B(0,c))^{\com}$, we have that
$$\frac{1}{|B|[\ro(|B|)]^2}\int_B|f(x)-P_{s,(r_B)^m}f(x)|^2\,dx\ls \bz, $$
which, together with the fact that $\bz\rightarrow 0$ induces
$c\rightarrow \fz$, implies that $\gz_3(f)=0$. Thus, $f\in\vmo$. Therefore,
$\vmo=\lz_{n(1/p-1)}(\rn)$, which completes the proof of Corollary \ref{c3.1}.
\end{proof}

\begin{rem}\label{r3.2}\rm
If  $L=\sqrt\Delta$, similarly to the proof of Theorem \ref{t3.1}, we then have that
 for $p\in (n/(n+1),1]$ and $w(t)=t^p$ for all $t\in(0,\fz)$, the space
$\vmo$ coincides with the space $\lz_{n(1/p-1)}(\rn)$ with equivalent norms.
We omit the details.
\end{rem}

\section{Predual spaces of the spaces $B_{\oz,L}(\rn)$ \label{s4}}

\hskip\parindent In this section, we identify that the predual space of
$B_{\oz,L}(\rn)$ is $\vmoz$, where $B_{\oz,L}(\rn)$ is the Banach completion of the
space $\hw$; see Definition \ref{dp4.3} and Theorem \ref{t4.4} below.

\subsection{Tent spaces}\label{s4.1}
\hskip\parindent We begin with some notions and known facts on tent
spaces.

Recall that a function $a$ on $\rr^{n+1}_+$ is called a
$T_{\oz}({\rr}^{n+1}_+)$-atom if

 (i) there exists a ball $B\subset
\rn$ such that $\supp a\subset \widehat{B};$

 (ii) $\int_{\widehat{B}}|a(x,t)|^2\frac{\,dx\,dt}{t}\le
|B|^{-1}[\ro(|B|)]^{-2}.$

The following theorem was established in
\cite{hsv}.
\begin{thm}\label{t4.1}
Let $\oz$ satisfy the assumption (c). Then for any $f\in
T_\oz({\rr}^{n+1}_+)$, there exist a sequence $\{a_j\}_{j}$ of
$T_{\oz}({\rr}^{n+1}_+)$-atoms  and a sequence of
numbers, $\{\lz_j\}_j\subset \cc$, such that
\begin{equation}\label{4.1}
  f(x,t)=\sum_j\lz_ja_j(x,t)   \ \ \ \ \ \mbox{for \ a.\,e.}
  \ (x,\,t)\in \rr^{n+1}_+,
  \end{equation}
where the series converges pointwisely for almost every $(x,\,t)\in \rr^{n+1}_+.$
Moreover, there exists a positive constant $C$
 such that for all $f\in T_\oz({\rr}^{n+1}_+)$,
\begin{equation}\label{4.2}
\Lambda(\{\lz_ja_j\}_j)\equiv\inf\lf\{\lz>0:\,\sum_j|B_j|\oz\lf(\frac{|\lz_j|\|a_j
\|_{T_2^2({\rr}^{n+1}_+)}}{\lz|B_j| ^{1/2}}\r)\le1\r\}\le
C\|f\|_{T_\oz({\rr}^{n+1}_+)},
\end{equation}
where $\widehat{B_j}$ appears as the support of $ a_j$.
\end{thm}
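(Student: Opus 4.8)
The plan is to adapt the Coifman--Meyer--Stein atomic decomposition of tent spaces to the present Orlicz setting, organizing everything around the level sets of the area function $\ca(f)$, and then to replace the $L^p$ bookkeeping by an estimate governed by $\oz$ and $\ro$. Throughout, recall that $f\in\twl$ means exactly $\ca(f)\in L(\oz)$. First I would decompose $\rnz$ according to the size of $\ca(f)$. For each $k\in\zz$ set $O_k\equiv\{x\in\rn:\ \ca(f)(x)>2^k\}$ and enlarge it to the open set $O_k^\ast\equiv\{x\in\rn:\ \cm(\chi_{O_k})(x)>1/2\}$, where $\cm$ is the Hardy--Littlewood maximal operator; by its weak type $(1,1)$ boundedness one has $O_k\subset O_k^\ast$, the sets $\{O_k^\ast\}_k$ are nested and decreasing, and $|O_k^\ast|\ls|O_k|$. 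A standard argument shows that, up to a set of $\frac{dx\,dt}{t}$-measure zero, $\{(x,t):\ f(x,t)\ne0\}\subset\bigcup_k(\widehat{O_k^\ast}\setminus\widehat{O_{k+1}^\ast})$, so that $f=\sum_k f\chi_{T_k}$ with $T_k\equiv\widehat{O_k^\ast}\setminus\widehat{O_{k+1}^\ast}$. I would then take a Whitney decomposition $O_k^\ast=\bigcup_j Q_{k,j}$ and, for each $Q_{k,j}$, let $B_{k,j}$ be a ball of radius comparable to the side length of $Q_{k,j}$; setting $A_{k,j}\equiv T_k\cap(Q_{k,j}\times(0,\fz))$ partitions the support of $f$ into pieces each contained in a tent $\widehat{B_{k,j}}$.

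Next I would define the atoms. Put $a_{k,j}\equiv\lz_{k,j}^{-1}f\chi_{A_{k,j}}$ with $\lz_{k,j}\equiv C2^k|B_{k,j}|\ro(|B_{k,j}|)$ for a suitable absolute constant $C$. The support condition $\supp a_{k,j}\subset\widehat{B_{k,j}}$ is immediate from the construction, and the normalization $\int_{\widehat{B_{k,j}}}|a_{k,j}|^2\frac{\,dx\,dt}{t}\le|B_{k,j}|^{-1}[\ro(|B_{k,j}|)]^{-2}$ will follow once the key size estimate $\int_{A_{k,j}}|f(x,t)|^2\frac{\,dx\,dt}{t}\ls2^{2k}|B_{k,j}|$ is established. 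Pointwise convergence $f=\sum_{k,j}\lz_{k,j}a_{k,j}$ a.e. is then a consequence of the fact that $\{A_{k,j}\}_{k,j}$ partitions $\{f\ne0\}$ up to a null set.

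I expect the size estimate to be the main obstacle, as it is the geometric heart of the argument. The point is that for $(y,t)\in A_{k,j}\subset(\widehat{O_{k+1}^\ast})^\com$ the cone geometry forces a fixed proportion of the ball $B(y,t)$ (and hence a set $E_{k,j}$ near $Q_{k,j}$ of measure $\ls|B_{k,j}|$) to lie in $F_{k+1}^\ast\equiv(O_{k+1}^\ast)^\com\subset\{\ca(f)\le 2^{k+1}\}$. Integrating $|f|^2$ against the cones with vertices in $E_{k,j}$ and using Fubini then bounds $\int_{A_{k,j}}|f|^2\frac{\,dx\,dt}{t}$ by a constant multiple of $\int_{E_{k,j}}[\ca(f)(x)]^2\,dx\le(2^{k+1})^2|E_{k,j}|\ls2^{2k}|B_{k,j}|$, which is exactly what is needed. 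This is precisely where the choice $O_k^\ast=\{\cm(\chi_{O_k})>1/2\}$ is used, since it guarantees the required lower bound on $|E_{k,j}|/|B_{k,j}|$.

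Finally, with the atoms and coefficients in hand, the estimate \eqref{4.2} reduces to a layer-cake comparison. Since each $a_{k,j}$ is a genuine $\twl$-atom, the quantity inside $\oz$ in \eqref{4.2} is at most $\frac{|\lz_{k,j}|}{\lz|B_{k,j}|\ro(|B_{k,j}|)}=\frac{C2^k}{\lz}$, and since $\sum_j|B_{k,j}|\ls|O_k^\ast|\ls|O_k|=|\{\ca(f)>2^k\}|$, I would bound
\begin{equation*}
\sum_{k,j}|B_{k,j}|\oz\lf(\frac{|\lz_{k,j}|\,\|a_{k,j}\|_{T_2^2(\rnz)}}{\lz|B_{k,j}|^{1/2}}\r)\ls\sum_k\lf|\{\ca(f)>2^k\}\r|\,\oz\lf(\frac{C2^k}{\lz}\r).
\end{equation*}
Using that $\oz$ is continuous, strictly increasing, of positive lower type and of upper type $1$, the right-hand side is comparable to $\int_\rn\oz(C\ca(f)(x)/\lz)\,dx$; choosing $\lz\equiv c\|\ca(f)\|_{L(\oz)}=c\|f\|_{\twl}$ with $c$ large enough forces this integral to be at most $1$, whence $\Lambda(\{\lz_{k,j}a_{k,j}\}_{k,j})\le\lz\ls\|f\|_{\twl}$, as required.
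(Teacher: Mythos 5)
Your proposal is correct, and it matches the paper's own approach in the only sense available: the paper does not prove Theorem \ref{t4.1} but quotes it from \cite{hsv}, where it is established by precisely this Coifman--Meyer--Stein stopping-time argument (level sets of $\ca(f)$, maximal-function enlargement, Whitney decomposition, truncation of $f$ to the resulting disjoint pieces, and a layer-cake comparison in which the $L^p$ bookkeeping is replaced by $\oz$ and $\ro$, using the positive lower type for the Abel-summation step and the upper type to absorb constants). One notational slip worth fixing: for $(y,t)\notin\widehat{O_{k+1}^\ast}$, the choice $O_{k+1}^\ast=\{\cm(\chi_{O_{k+1}})>1/2\}$ forces a fixed proportion of $B(y,t)$ to lie in $F_{k+1}\equiv(O_{k+1})^\com$, not in $(O_{k+1}^\ast)^\com$ as you wrote; since $F_{k+1}\subset\{\ca(f)\le 2^{k+1}\}$, this weaker (and correct) statement is all your Fubini step needs, so the argument is unaffected.
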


\begin{defn}\label{dp4.1} Let $\oz$ satisfy the assumption (c).
The space $\wtw$ is defined to be the set of all $f=\sum_{j}\lz_j
a_j$, where the series converges in $(\twz)^\ast$, $\{a_j\}_j$ are $\twl$-atoms and
$\{\lz_j\}_j\in \ell^1.$ If $f\in\wtw$, then define
$$\|f\|_{\wtw}\equiv\inf\bigg\{\sum_j|\lz_j|\bigg\},$$
where the infimum is taken over all possible decompositions of $f$ as above.
\end{defn}

By \cite[Lemma 3.1]{hm1}, $\wtw$ is a Banach space.
Moreover, we have the following
lemma, which implies that $\twl$ is dense in $\wtw$.
Thus, in this sense, $\wtw$ is called the Banach completion
of the space $\twl$; see also \cite{ja, pe, wws}.

 \begin{lem}\label{l4.1}
 Let $\oz$ satisfy the assumption (c). Then there exists a positive constant
 $C$ such that for all $f\in\twl$, $f\in\wtw$ and
\begin{equation*}
 \|f\|_{\wtw}\le C\|f\|_{\twl}.
 \end{equation*}
\end{lem}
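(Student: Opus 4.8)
The goal is to show that any $f \in \twl$ admits an atomic decomposition $f = \sum_j \lz_j a_j$ into $\twl$-atoms with $\sum_j |\lz_j| \ls \|f\|_{\twl}$, so that $f \in \wtw$ with the asserted norm bound. The natural plan is to invoke Theorem \ref{t4.1}, which produces a decomposition $f = \sum_j \lz_j a_j$ into $T_\oz(\rnz)$-atoms converging pointwise almost everywhere, together with the control \eqref{4.2} on the quantity $\Lambda(\{\lz_j a_j\}_j)$ by $\|f\|_{\twl}$. The issue is that the atoms and normalizing coefficients in Theorem \ref{t4.1} are calibrated to the Orlicz quantity $\Lambda$, whereas Definition \ref{dp4.1} requires genuine $\twl$-atoms (normalized in $L^2$ by $|B|^{-1/2}[\ro(|B|)]^{-1}$) together with an $\ell^1$ coefficient sequence. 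So the real work is to convert the Orlicz-type control into an $\ell^1$ control on suitably renormalized coefficients.

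The plan is as follows. First I would rescale: from Theorem \ref{t4.1} write $f = \sum_j \wz\lz_j \wz a_j$, where $\wz a_j \equiv a_j / (\|a_j\|_{T_2^2(\rnz)} |B_j|^{1/2} [\ro(|B_j|)])^{-1}$ is a normalized $\twl$-atom (so that $\wz a_j$ satisfies condition (ii) of a $\twl$-atom) and $\wz\lz_j \equiv \lz_j \|a_j\|_{T_2^2(\rnz)} |B_j|^{1/2}[\ro(|B_j|)]$ absorbs the normalization. It then suffices to prove $\sum_j |\wz\lz_j| \ls \|f\|_{\twl}$. By homogeneity I may assume $\|f\|_{\twl} = 1$, so that by \eqref{4.2} the quantity $\Lambda(\{\lz_j a_j\}_j) \ls 1$; rescaling, I may assume $\sum_j |B_j| \oz\big( |\lz_j| \|a_j\|_{T_2^2(\rnz)} / (C|B_j|^{1/2}) \big) \le 1$ for an appropriate constant $C$. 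The key point is then to pass from this Orlicz-sum estimate to the $\ell^1$ bound $\sum_j |\wz\lz_j| \ls 1$.

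The main obstacle is exactly this last passage, and it is where the lower-type property of $\oz$ enters decisively. Setting $t_j \equiv |\lz_j| \|a_j\|_{T_2^2(\rnz)} / (C|B_j|^{1/2})$, I have $\sum_j |B_j| \oz(t_j) \le 1$, and I must bound $\sum_j |\wz\lz_j| = C \sum_j t_j |B_j| \ro(|B_j|)$. Since $\ro(s) = s^{-1}/\oz^{-1}(s^{-1})$, one has $|B_j| \ro(|B_j|) = 1/\oz^{-1}(1/|B_j|)$, so the summand becomes $C\, t_j / \oz^{-1}(1/|B_j|)$. I would split the sum according to whether the term $s_j \equiv |B_j|\oz(t_j)$ is large or small and use the lower-type $p_0(\oz)$ estimate $\oz(\eta \tau) \le C \eta^{p_0} \oz(\tau)$ for $\eta \le 1$ together with the monotonicity of $\oz^{-1}$; because $\sum_j s_j \le 1$ forces $s_j \le 1$ for every $j$, each individual term can be compared to $s_j^{\,p_0}$ or to $s_j$ itself, and then $\sum_j t_j|B_j|\ro(|B_j|) \ls \sum_j s_j^{\,\gz} \le \sum_j s_j \le 1$ for a suitable exponent $\gz \in (0,1]$ since $s_j \le 1$. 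This elementary but delicate Orlicz computation — making precise how the critical lower type $\wz p_0(\oz) > 0$ yields an $\ell^1$ bound from a finite Orlicz mass — is the crux; the convergence of $\sum_j \wz\lz_j \wz a_j$ in $(\twz)^\ast$ then follows from the absolute convergence in $\ell^1$ together with the uniform $(\twz)^\ast$-boundedness of normalized $\twl$-atoms, and taking the infimum over decompositions gives $\|f\|_{\wtw} \le C\|f\|_{\twl}$.
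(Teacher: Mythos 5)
Your overall architecture is the paper's: renormalize the atoms from Theorem \ref{t4.1} so that the size condition is saturated (the paper instead gets $\|a_j\|_{T_2^2(\rnz)}=|B_j|^{-1/2}[\ro(|B_j|)]^{-1}$ directly from the construction in \cite{hsv}, and runs the comparison by testing the Orlicz sum at $\lz=\sz_L=\sum_{|j|\le L}|\lz_j|$, but this is the same computation), then pass from the Orlicz control \eqref{4.2} to an $\ell^1$ bound term by term, using that each $s_j\equiv|B_j|\oz(t_j)\le 1$. However, the step you yourself identify as the crux is handled with the wrong tool, and as written it fails. Put $u_j\equiv\oz^{-1}(1/|B_j|)$, so that $|B_j|\ro(|B_j|)=1/u_j$ and $|B_j|\oz(u_j)=1$; you must bound $\sum_j t_j/u_j$, knowing $s_j=\oz(t_j)/\oz(u_j)\le 1$ and hence $t_j\le u_j$. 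The lower type estimate $\oz(\eta\tau)\le C\eta^{p_0}\oz(\tau)$, applied with $\eta=t_j/u_j\le 1$ and $\tau=u_j$, gives $s_j\le C(t_j/u_j)^{p_0}$, i.e.\ the \emph{lower} bound $t_j/u_j\ge (s_j/C)^{1/p_0}$; it bounds $\oz(\eta\tau)$ from above and therefore can never produce the upper bound on $t_j/u_j$ that you need. The property that is decisive here is the \emph{upper type} $1$ of $\oz$ (this is exactly what the paper invokes): since $u_j/t_j\ge 1$, upper type $1$ gives $\oz(u_j)=\oz((u_j/t_j)\,t_j)\le C(u_j/t_j)\,\oz(t_j)$, hence $t_j/u_j\le C\,s_j$, and summing yields $\sum_j t_j|B_j|\ro(|B_j|)\le C\sum_j s_j\le C$ with exponent exactly $1$.

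This matters because your closing chain of inequalities is false as stated: for $s_j\le 1$ and $\gz\in(0,1]$ one has $s_j^{\gz}\ge s_j$, not $\le$, and $\sum_j s_j\le 1$ does \emph{not} control $\sum_j s_j^{\gz}$ for any $\gz<1$ (take $s_j=j^{-2}$ and $\gz=1/2$). So if your term-by-term comparison really produced $s_j^{\gz}$ with $\gz<1$ --- which is all a lower-type argument could even hope for --- the proof would collapse at the last line; the argument only closes because upper type $1$ gives $\gz=1$ on the nose. Once this step is repaired, the rest of your outline (the uniform bound $\|\wz a_j\|_{(\twz)^\ast}\le 1$ of saturated atoms, so that $\ell^1$ coefficients give convergence of the series in $(\twz)^\ast$, together with the identification of the limit with $f$ via dominated convergence against the pointwise a.e.\ decomposition) goes through and reproduces the paper's proof of Lemma \ref{l4.1}.
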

\begin{proof}\rm Let $f\in \twl$. By Theorem \ref{t4.1},  there exists
$T_{\oz}({\rr}^{n+1}_+)$-atoms $\{a_j\}_{j}$ and $\{\lz_j\}_j\subset \cc$ such
that \eqref{4.1} and \eqref{4.2} hold. Furthermore, by
the proof of Theorem \ref{t4.1}  in \cite{hsv}, we may choose $a_j$ such that
$\|a_j\|_{T^2_2(\rnz)}=|B_j|^{-1/2}\ro(|B_j|)^{-1}$ for each $j$, where
$\supp a_j\subset \widehat{B_j}$;  see also \cite{cms}.

For any $L\in \cn$, set $\sz_L\equiv\sum_{|j|\le L}|\lz_j|.$
Since $\oz$ is of upper type $1$, by this together with
$\ro(t)=t^{-1}/\oz^{-1}(t^{-1})$, we obtain
\begin{eqnarray*}&&\sum_j|B_j|\oz\lf(\frac{|\lz_j|\|a_j\|_{T_2^2({\rr}^{n+1}_+)}}
{\sigma_L|B_j|^{1/2}}\r)\gs \sum_{|j|\le L}|B_j|
\oz\lf(\frac{1}{|B_j|\ro(|B_j|)}\r)\frac{|\lz_j|}{\sigma_L} \gs
1,\end{eqnarray*}
which implies that
$$\sum_{|j|\le L} |\lz_j|\ls
\Lambda(\{\lz_ja_j\}_j)\ls\|f\|_{T_\oz({\rr}^{n+1}_+)}.$$
Letting $L\to\fz,$ we further obtain
$\sum_{j}|\lz_j|\ls\|f\|_{T_\oz({\rr}^{n+1}_+)}$.

Since $f\in \twl$ and $(\twl)^\ast=\twz$ (see \cite{jyz}),
we see that
$$f\in \twl\subset(\twl)^{\ast\ast}=(\twz)^\ast.$$
This fact can also be proved in the following direct way.
Indeed, for all $g\in\twz$, as a corollary of the monotone
convergence theorem and the H\"older inequality, we have
\begin{eqnarray*}
\int_{\rnz}\lf|f(x,t)g(x,t)\r|\,\frac{dx\,dt}{t}
&&\le\int_{\rnz}\sum_j|\lz_j|\lf|a_j(x,t)g(x,t)\r|\,\frac{dx\,dt}{t}\\
&&\le\sum_j|\lz_j|\int_{\widehat{B_j}}\lf|a_j(x,t)g(x,t)\r|\,\frac{dx\,dt}{t}\\
&&\le\sum_j|\lz_j|\|a_j\|_{T_2^2(\rnz)}\|g\chi_{\widehat{B_j}}\|_{T_2^2(\rnz)}\\
&&\le\|g\|_{\twz}\sum_j|\lz_j|\\
&&\ls \|f\|_{\twl}\|g\|_{\twz}<\fz,
\end{eqnarray*}
which together with the Lebesgue dominated convergence theorem
further implies that for all $g\in\twz$,
$$\lf|\int_{\rnz}f(x,t)g(x,t)\,\frac{dx\,dt}{t}\r|\ls \|f\|_{\twl}\|g\|_{\twz}.$$
That is, $f\in (\twz)^\ast$ and $\|f\|_{(\twz)^\ast}\ls\|f\|_{\twl}$.
Recall that for any $\ell\in (\twz)^\ast$, its $(\twz)^\ast$ norm is defined by
$$\lf\|\ell\r\|_{(\twz)^\ast}=\sup_{\|g\|_{\twz}\le 1}|\ell(g)|.$$
Observe also that $a_j\in (\twz)^\ast$ for all $j$. Now,
from these observations, the monotone convergence theorem
and the H\"older inequality, it follows that
\begin{eqnarray*}
&&\lf\|f-\sum_{|j|\le L} \lz_ja_j\r\|_{(\twz)^\ast}\\
&&\hs=\sup_{\|g\|_{\twz}\le 1}\lf|\int_{\rnz}\lf(f(x,t)-\sum_{|j|\le L} \lz_ja_j(x,t)
\r)g(x,t)\,\frac{dx\,dt}{t}\r|\\
&&\hs\le \sup_{\|g\|_{\twz}\le 1}
\int_{\twz}\sum_{|j|>L}|\lz_j||a_j(x,t)g(x,t)|\,\frac{dx\,dt}{t}\\
&&\hs=\sup_{\|g\|_{\twz}\le 1}\sum_{|j|>L}|\lz_j|
\int_{\widehat{B_j}}|a_j(x,t)g(x,t)|\,\frac{dx\,dt}{t}\\
&&\hs\le \sup_{\|g\|_{\twz}\le 1}\sum_{|j|>L}|\lz_j|
\|a_j\|_{T_2^2(\rnz)}\|g\chi_{\widehat{B_j}}\|_{T_2^2(\rnz)}
\le\sum_{|j|>L}|\lz_j|\to 0,
\end{eqnarray*}
as $L\to \fz$. Thus, the series in \eqref{4.1} converges in $(\twz)^\ast$,
which further implies that $f\in \wtw$ and
\begin{equation*}
\|f\|_{\wtw}\le\sum_{j}|\lz_j|\ls\|f\|_{T_\oz({\rr}^{n+1}_+)}.
\end{equation*}
This finishes the proof of Lemma \ref{l4.1}.
\end{proof}

\begin{lem}\label{l4.2}Let $\oz$ satisfy the assumption (c).
Then $(\wtw)^\ast=\twz$ via the pairing
$$\la f,g\ra=\int_{\rnz}f(x,t)g(x,t)\frac{\,dx\,dt}{t}$$
for all $f\in\wtw$ and $g\in \twz$.
\end{lem}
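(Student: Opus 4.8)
The plan is to establish the two continuous inclusions $\twz\hookrightarrow(\wtw)^\ast$ and $(\wtw)^\ast\hookrightarrow\twz$, both realized through the stated pairing $\la f,g\ra$, and to check that the resulting norms are comparable. The first inclusion is the soft one and essentially reuses the computation in the proof of Lemma \ref{l4.1}. Fixing $g\in\twz$ and writing $f=\sum_j\lz_ja_j\in\wtw$ as in Definition \ref{dp4.1}, with each $a_j$ a $\twl$-atom supported in some $\widehat{B_j}$, the monotone convergence theorem together with the H\"older inequality on each tent and the atom size condition (ii) gives
\begin{equation*}
|\la f,g\ra|\le\sum_j|\lz_j|\,\|a_j\|_{T^2_2(\rnz)}\,\|g\chi_{\widehat{B_j}}\|_{T^2_2(\rnz)}\le\|g\|_{\twz}\sum_j|\lz_j|,
\end{equation*}
where the last step uses $\|g\chi_{\widehat{B_j}}\|_{T^2_2(\rnz)}\le|B_j|^{1/2}\ro(|B_j|)\|g\|_{\twz}$, which is immediate from the definition of $\ccc_\ro$. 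Taking the infimum over all decompositions of $f$ yields $|\la f,g\ra|\ls\|g\|_{\twz}\|f\|_{\wtw}$, so $g$ induces an element of $(\wtw)^\ast$ of norm at most $\|g\|_{\twz}$.

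For the reverse inclusion I would start from the observation that every $\twl$-atom $a$ belongs to $\wtw$ with $\|a\|_{\wtw}\le1$, by the trivial one-term decomposition. Fix $\ell\in(\wtw)^\ast$ and a ball $B\subset\rn$. Given any $h\in L^2(\widehat B,\frac{dx\,dt}{t})$, the normalized function $a\equiv h/(|B|^{1/2}\ro(|B|)\|h\|_{T^2_2(\rnz)})$ satisfies condition (ii) with equality, hence is a $\twl$-atom, so $|\ell(h)|\le|B|^{1/2}\ro(|B|)\|h\|_{T^2_2(\rnz)}\|\ell\|_{(\wtw)^\ast}$. Thus $h\mapsto\ell(h)$ is a bounded linear functional on the Hilbert space $L^2(\widehat B,\frac{dx\,dt}{t})$, and the Riesz representation theorem---the only functional-analytic input, in line with the approach announced in the introduction---produces $g_B\in L^2(\widehat B,\frac{dx\,dt}{t})$ with $\ell(h)=\int_{\widehat B}h(x,t)g_B(x,t)\frac{dx\,dt}{t}$ for all such $h$ and with $\|g_B\|_{T^2_2(\rnz)}\le|B|^{1/2}\ro(|B|)\|\ell\|_{(\wtw)^\ast}$.

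The functions $\{g_B\}_B$ are consistent, since for $B_1\subset B_2$ both $g_{B_1}$ and $g_{B_2}$ represent $\ell$ on functions supported in $\widehat{B_1}$; exhausting $\rnz$ by the increasing tents $\widehat{B(0,k)}$ then patches them into a single $g\in L^2_{\loc}(\rnz,\frac{dx\,dt}{t})$. The crucial point is that the Riesz bound is exactly the $\ro$-Carleson condition: for every ball $B$,
\begin{equation*}
\frac{1}{\ro(|B|)}\lf(\frac{1}{|B|}\int_{\widehat B}|g(x,t)|^2\frac{dx\,dt}{t}\r)^{1/2}=\frac{\|g_B\|_{T^2_2(\rnz)}}{|B|^{1/2}\ro(|B|)}\le\|\ell\|_{(\wtw)^\ast},
\end{equation*}
so $\ccc_\ro(g)\in L^\fz(\rn)$ and $g\in\twz$ with $\|g\|_{\twz}\le\|\ell\|_{(\wtw)^\ast}$. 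Finally, $\ell(f)=\la f,g\ra$ holds for every atom (each supported in some tent), hence for finite atomic sums by linearity; since such sums are dense in $\wtw$---the tail $\sum_{|j|>N}\lz_ja_j$ of any decomposition has $\wtw$-norm at most $\sum_{|j|>N}|\lz_j|\to0$---and both $\ell$ and $\la\cdot,g\ra$ are continuous on $\wtw$, the identity extends to all $f\in\wtw$.

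I expect the main obstacle to be the bookkeeping in the middle step: verifying the pointwise consistency of the locally defined representatives $g_B$ and gluing them into a single globally defined $g$ on $\rnz$, and then confirming that the extension of the representation from atoms to all of $\wtw$ is legitimate (which rests on the density of finite atomic sums together with the continuity of $\la\cdot,g\ra$ supplied by the first inclusion). By contrast, the analytic heart---recognizing that the Riesz bound on each tent coincides verbatim with the $\ro$-Carleson estimate defining $\twz$---is essentially immediate once the atom normalization is chosen.
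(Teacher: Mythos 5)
Your proof is correct, but the hard inclusion is handled by a genuinely different route than the paper's. For the easy direction ($\twz\subset(\wtw)^\ast$) you and the paper do exactly the same thing: decompose $f$ into atoms and apply H\"older on each tent together with the atom normalization and the definition of $\ccc_\ro$. For the converse inclusion $(\wtw)^\ast\subset\twz$, however, the paper disposes of it in one line by quoting the known duality $(\twl)^\ast=\twz$ from the earlier work \cite{jyz} and combining it with the embedding $\twl\subset\wtw$ of Lemma \ref{l4.1} (so that a functional on $\wtw$ restricts to a bounded functional on $\twl$, is represented there by some $g\in\twz$, and is then recovered on all of $\wtw$ by density and the continuity supplied by the easy direction). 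You instead reprove this direction from scratch: observing that every suitably normalized element of $L^2(\widehat B,\,dx\,dt/t)$ is a $\twl$-atom, you get a Riesz representative $g_B$ on each tent, check consistency of the family $\{g_B\}_B$, glue along the exhaustion $\{\widehat{B(0,k)}\}_k$ of $\rnz$, and note that the Riesz bound is verbatim the $\ro$-Carleson estimate defining $\twz$; the extension from finite atomic sums to all of $\wtw$ by density and continuity is also handled correctly. What your argument buys is self-containedness and an explicit constant ($\|g\|_{\twz}\le\|\ell\|_{(\wtw)^\ast}$, up to the usual $T^2_2$-norm convention), using only $L^2$ duality --- which is precisely the philosophy the authors advertise in the introduction and actually deploy in their proof of Theorem \ref{t4.2}, where they run the same Riesz-representation scheme on the truncated regions $O_k$ rather than on tents. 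What the paper's argument buys is brevity, at the cost of importing the duality $(\twl)^\ast=\twz$ as a black box.
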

\begin{proof}\rm  The facts that $(\twl)^\ast=\twz$ (see \cite{jyz}) and
$\twl\subset\wtw$ imply that $(\wtw)^\ast\subset\twz$.

Conversely, let $g\in\twz$. Then for any $f\in\wtw$,
 choose a sequence $T_\oz$-atoms $\{a_j\}_j$ and $\{\lz_j\}_j\in \ell^1$ such
 that $f=\sum_j\lz_ja_j$, where the series converges in $(\twz)^\ast$ and
 $\sum_j|\lz_j|\ls \|f\|_{\wtw}$. Thus, by the H\"older inequality, we obtain
\begin{eqnarray*}
|\la f,g\ra|&&\le\sum_j|\lz_j|\int_{\rnz}|a_j(x,t)g(x,t)|\,
\frac{dx\,dt}{t}\\&&\le \|g\|_{\twz}\sum_{j}|\lz_j|\ls
\|g\|_{\twz}\|f\|_{\wtw},\end{eqnarray*} which implies that $g\in
(\wtw)^\ast$, and thus, completes the proof of Lemma \ref{l4.2}.
\end{proof}

Recall that $\twc$ denotes the set of all functions in $\twz$ with compact support.

\begin{lem}\label{l4.3} Let $\oz$ satisfy the assumption (c). If $f\in \wtw$, then
\begin{eqnarray}\label{4.3}
  \|f\|_{\wtw}=\sup_{g\in \twc,\,\|g\|_{\twz}\le 1}
  \bigg|\int_{\rnz}f(x,t)g(x,t)\frac{\,dx\,dt}{t}\bigg|.
\end{eqnarray}
\end{lem}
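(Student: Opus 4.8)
The plan is to prove the two inequalities in \eqref{4.3} separately. The bound of the right-hand side of \eqref{4.3} by $\|f\|_{\wtw}$ will come from the atomic decomposition of $f$, exactly as in Lemma \ref{l4.1}; the reverse bound will come from the duality $(\wtw)^\ast=\twz$ of Lemma \ref{l4.2} together with a truncation argument that replaces a general testing function $g\in\twz$ by one with compact support.

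I first treat the easy inequality. Given $f\in\wtw$, choose by Definition \ref{dp4.1} and Theorem \ref{t4.1} an atomic decomposition $f=\sum_j\lz_j a_j$, where each $a_j$ is a $\twl$-atom with $\supp a_j\subset\widehat{B_j}$, so that $\|a_j\|_{T_2^2(\rnz)}\le|B_j|^{-1/2}[\ro(|B_j|)]^{-1}$. For any $g\in\twc$ with $\|g\|_{\twz}\le1$, the H\"older inequality and the definition of the norm $\|\cdot\|_{\twz}$ yield
\begin{equation*}
\bigg|\int_{\rnz}f(x,t)g(x,t)\frac{\,dx\,dt}{t}\bigg|\le\sum_j|\lz_j|\,\|a_j\|_{T_2^2(\rnz)}\,\|g\chi_{\widehat{B_j}}\|_{T_2^2(\rnz)}\le\|g\|_{\twz}\sum_j|\lz_j|.
\end{equation*}
Taking the infimum over all admissible decompositions gives $|\int_{\rnz}fg\,\frac{\,dx\,dt}{t}|\le\|g\|_{\twz}\|f\|_{\wtw}\le\|f\|_{\wtw}$, and the supremum over $g$ then bounds the right-hand side of \eqref{4.3} by $\|f\|_{\wtw}$. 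The same estimate, now for arbitrary $g\in\twz$, shows that $fg\in L^1(\rnz,\frac{\,dx\,dt}{t})$ for every $f\in\wtw$ and $g\in\twz$, a fact I will need below.

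For the reverse inequality I would use Lemma \ref{l4.2}: since $(\wtw)^\ast=\twz$ and the canonical embedding of $\wtw$ into its bidual $(\twz)^\ast$ is isometric, it follows that $\|f\|_{\wtw}=\sup\{|\int_{\rnz}fg\,\frac{\,dx\,dt}{t}|:g\in\twz,\ \|g\|_{\twz}\le1\}$. It therefore remains to see that this supremum does not change when $g$ is further required to have compact support. Fix $g\in\twz$ with $\|g\|_{\twz}\le1$ and set $g_k\equiv g\chi_{K_k}$, where $K_k\equiv\{(x,t)\in\rnz:|x|\le k,\ 1/k\le t\le k\}$ increase to $\rnz$. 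Each $g_k$ has compact support, and since truncation only decreases the integrals defining $\ccc_\ro$, we have $\ccc_\ro(g_k)\le\ccc_\ro(g)$ pointwise, whence $\|g_k\|_{\twz}\le\|g\|_{\twz}\le1$ and $g_k\in\twc$. Because $fg\in L^1(\rnz,\frac{\,dx\,dt}{t})$, because $fg_k\to fg$ pointwise, and because $|fg_k|\le|fg|$, the dominated convergence theorem gives $\int_{\rnz}fg_k\,\frac{\,dx\,dt}{t}\to\int_{\rnz}fg\,\frac{\,dx\,dt}{t}$. Hence $|\int_{\rnz}fg\,\frac{\,dx\,dt}{t}|$ is bounded by the right-hand side of \eqref{4.3}, and taking the supremum over $g\in\twz$ yields $\|f\|_{\wtw}\le$ the right-hand side of \eqref{4.3}. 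Combining the two inequalities proves \eqref{4.3}.

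The routine parts are the atomic estimate and the truncation; the one point that genuinely needs care is that the equality, rather than a mere norm equivalence, requires the dual identification in Lemma \ref{l4.2} to be isometric, so that the bidual norm of $f$ is computed exactly by the pairing. Granting this, the remaining obstacle is verifying the hypotheses of dominated convergence, which is precisely why the $L^1$-integrability of $fg$ recorded in the first step is needed, and the preservation of the normalization $\|g_k\|_{\twz}\le1$ under truncation, which follows from the monotonicity of $\ccc_\ro$.
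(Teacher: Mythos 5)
Your proof is correct and follows essentially the same route as the paper: both arguments rest on the duality $(\wtw)^\ast=\twz$ of Lemma \ref{l4.2} to write $\|f\|_{\wtw}=\sup_{\|g\|_{\twz}\le1}\bigl|\int_{\rnz}f(x,t)g(x,t)\frac{\,dx\,dt}{t}\bigr|$, and then recover this supremum over compactly supported $g$ by truncating a near-optimal $g$ to the sets $O_k$ and using $fg\in L^1(\rnz,\frac{\,dx\,dt}{t})$ (the paper chooses $k$ large via absolute continuity of the integral, you invoke dominated convergence --- the same step). Your explicit verifications that $\|g\chi_{O_k}\|_{\twz}\le\|g\|_{\twz}$ by monotonicity of $\ccc_\ro$, and that the exact equality requires the identification in Lemma \ref{l4.2} to be isometric rather than a mere norm equivalence, are points the paper leaves implicit but do not alter the argument.
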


\begin{proof}\rm Let $f\in\wtw$. By Lemma \ref{l4.2}, we obtain
\begin{eqnarray*}
  \|f\|_{\wtw}=\sup_{\|g\|_{\twz}\le 1}
  \bigg|\int_{\rnz}f(x,t)g(x,t)\frac{\,dx\,dt}{t}\bigg|.
\end{eqnarray*}
For any $\bz>0$, there exists $g\in \twz$ with
 $\|g\|_{\twz}\le 1$ such that
\begin{eqnarray*}
\bigg|\int_{\rnz}f(x,t)g(x,t)\frac{\,dx\,dt}{t}\bigg|\ge
\|f\|_{\wtw}-\frac{\bz}{2}.
\end{eqnarray*}
Notice here $fg\in L^1(\rnz)$. Let $O_k\equiv\{(x,t):\,
|x|<k, 1/k<t<k\}$. Then there exists $k\in\cn$ such that
\begin{eqnarray*}
\bigg|\int_{\rnz}f(x,t)g(x,t)\chi_{O_k}(x,t)\frac{\,dx\,dt}{t}\bigg|\ge
\|f\|_{\wtw}-\bz.
\end{eqnarray*}
Obviously, $g\chi_{O_k}\in \twc$. Thus, \eqref{4.3} holds,
which completes the proof of Lemma \ref{l4.3}.
\end{proof}

The following lemma is a slight modification of \cite[Lemma 4.2]{cw}; see also
\cite{pe,wws}. We omit the details here.

\begin{lem}\label{l4.4}
Let $\oz$ satisfy the assumption (c). Suppose that
$\{f_k\}_{k=1}^\fz$ is a bounded family of functions in $\wtw$. Then
there exist $f\in \wtw$ and a subsequence $\{f_{k_j}\}_j$ such that
for all $g\in \twc$,
$$\lim_{j\rightarrow\fz}\int_{\rnz}f_{k_j}(x,t)g(x,t)\frac{\,dx\,dt}{t}=
\int_{\rnz}f(x,t)g(x,t)\frac{\,dx\,dt}{t}.$$
\end{lem}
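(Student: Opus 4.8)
The plan is to run a Cantor diagonal argument over an exhaustion of $\rnz$ by compact pieces, in the spirit of \cite{cw,pe,wws}, reducing the pairing to ordinary $L^2$ duality and then identifying the weak limit inside $\wtw$ by means of Lemma \ref{l4.3}. Two elementary facts are recorded first. From Lemma \ref{l4.2} the pairing $\la h,g\ra\equiv\int_{\rnz}h(x,t)g(x,t)\frac{dx\,dt}{t}$ satisfies $|\la h,g\ra|\le\|h\|_{\wtw}\|g\|_{\twz}$, so setting $M\equiv\sup_k\|f_k\|_{\wtw}<\fz$ gives $|\la f_k,g\ra|\le M\|g\|_{\twz}$ for every $g$ and every $k$. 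Second, since $\|\ca(g)\|_{L^2(\rn)}$ is comparable to $\|g\|_{L^2(\rnz,\,dx\,dt/t)}$, the space $\twc$ is exactly the set of compactly supported functions of $L^2(\rnz,\,dx\,dt/t)$; writing $O_k\equiv\{(x,t):|x|<k,\ 1/k<t<k\}$, a function $g$ supported in $\overline{O_k}$ only picks up balls $B$ with $r_B\gs 1/k$ in the definition of $\ccc_\ro$, on which $\ro(|B|)$ is bounded below, whence $\|g\|_{\twz}\ls_k\|g\|_{L^2(O_k)}$.

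Each $L^2(O_k,\,dx\,dt/t)$ is separable, so I would fix a countable family $\{g_m\}_m\subset\twc$ that is dense, in the $L^2(O_k)$-norm, among functions supported in $\overline{O_k}$, for every $k\in\cn$. For fixed $m$ the scalars $\{\la f_k,g_m\ra\}_k$ are bounded by $M\|g_m\|_{\twz}$, so a diagonal extraction produces a subsequence $\{f_{k_j}\}_j$ along which $\lim_j\la f_{k_j},g_m\ra$ exists for all $m$. For an arbitrary $g\in\twc$ supported in $\overline{O_{k_0}}$, choosing $g_m\rz g$ in $L^2(O_{k_0})$ and using $|\la f_{k_j},g-g_m\ra|\ls_{k_0}M\|g-g_m\|_{L^2(O_{k_0})}$ uniformly in $j$ shows $\{\la f_{k_j},g\ra\}_j$ is Cauchy; call its limit $\ell(g)$. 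Then $\ell$ is linear on $\twc$ and $|\ell(g)|\le M\|g\|_{\twz}$.

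To produce the representing function I would use only the fact that $(L^2)^\ast=L^2$. On each $O_k$ the functional $\ell$, restricted to functions supported in $\overline{O_k}$, is bounded for the $L^2(O_k,\,dx\,dt/t)$-norm, so the Riesz representation theorem furnishes $f^{(k)}\in L^2(O_k,\,dx\,dt/t)$ with $\ell(g)=\int f^{(k)}g\,\frac{dx\,dt}{t}$; uniqueness forces the $f^{(k)}$ to be consistent on the nested $O_k$, so they patch to a single $f\in L^2_{\mathrm{loc}}(\rnz,\,dx\,dt/t)$ with $\la f_{k_j},g\ra\rz\int fg\,\frac{dx\,dt}{t}$ for all $g\in\twc$, which is the asserted convergence. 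It then remains to check $f\in\wtw$. Here I would observe that $\ccc_\ro$ is monotone, so $\|g\chi_{O_R}\|_{\twz}\le\|g\|_{\twz}$; hence for each $R$ the truncation $f\chi_{O_R}\in\twc\subset\twl\subset\wtw$ (Lemma \ref{l4.1}) satisfies, by Lemma \ref{l4.3}, $\|f\chi_{O_R}\|_{\wtw}=\sup_{g\in\twc,\,\|g\|_{\twz}\le1}\big|\int f(g\chi_{O_R})\frac{dx\,dt}{t}\big|\le M$ uniformly in $R$, and moreover $\sup_{g\in\twc,\,\|g\|_{\twz}\le1}|\ell(g)|\le M$.

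The main obstacle is exactly this last identification $f\in\wtw$: the preceding steps only place $f$ in $L^2_{\mathrm{loc}}$ and bound its pairing against $\twc$, and upgrading a bounded pairing against $\twc$ into genuine membership of the atomically defined space $\wtw$ with $\|f\|_{\wtw}\ls M$ is the crux. It amounts to the converse of the norm formula in Lemma \ref{l4.3}, which I would supply either through the uniform truncation bound above together with a limiting argument as $R\rz\fz$, or, equivalently, through the explicit $L^2$-duality construction that realizes $\wtw$ as a dual space; everything else (separability, the diagonal extraction, and the density extension) is routine.
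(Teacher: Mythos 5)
Your argument is correct, and essentially identical to the paper's proof of Theorem \ref{t4.2}(ii), up to and including the uniform bound $\|f\chi_{O_R}\|_{\wtw}\le M$; but the step you yourself flag as ``the crux'' --- passing from a bounded pairing against $\twc$ (equivalently, uniformly bounded truncations) to the membership $f\in\wtw$ --- is a genuine gap, and it is not fillable by either of the two routes you suggest, because both are circular. Route (a), ``the uniform truncation bound together with a limiting argument as $R\rz\fz$,'' is precisely an application of Lemma \ref{l4.4} to the bounded family $\{f\chi_{O_R}\}_R\subset\wtw$, i.e.\ an appeal to the very statement being proved: boundedness of $\{f\chi_{O_R}\}_R$ in $\wtw$ gives no convergence mechanism by itself, since $\wtw$ is neither reflexive nor yet known to be a dual space. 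Route (b), ``the explicit $L^2$-duality construction that realizes $\wtw$ as a dual space,'' is Theorem \ref{t4.2}, whose proof in the paper invokes Lemma \ref{l4.4} at exactly this point (to show that the locally constructed Riesz representative $g$ lies in $\wtw$); so within the paper's logical architecture it cannot be used here, and proving it independently would require solving the identical problem. Soft functional analysis cannot close the gap either: by Lemma \ref{l4.3} the pairing embeds $\wtw$ isometrically into $(\twv)^\ast$, and what must be shown is that a weak-$\ast$ limit point of a bounded subset of the image still lies in the image; Banach--Alaoglu only produces a limit in the ambient dual, and a non-reflexive subspace need not be weak-$\ast$ closed. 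In short, your construction produces an $f\in L^2_{\loc}(\rnz)$ representing a bounded functional on $(\twc,\|\cdot\|_{\twz})$, but nothing in the proposal shows this $f$ admits an atomic decomposition $\sum_j\lz_ja_j$ with $\{\lz_j\}_j\in\ell^1$ converging in $(\twz)^\ast$, which is what $f\in\wtw$ means by Definition \ref{dp4.1}.

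The paper itself omits the proof, citing a slight modification of Lemma 4.2 of Coifman--Weiss, and that argument is of a different nature: it works directly with the atomic structure rather than with Riesz representation. Concretely, one first reduces, at the cost of a fixed multiplicative constant (using that $\ro$ is of upper type $\bz_1(\ro)$), to atoms supported in tents over a countable and locally finite family of balls, say radii $2^i$ with centers in $2^i\zz^n$; one then merges, for each $k$, all atoms of $f_k$ attached to the same ball $B$ (a convex combination of $\twl$-atoms for a fixed $B$ is again an atom for $B$), writing $f_k=\sum_m\mu_{m,k}A_{m,k}$ with $\sum_m\mu_{m,k}\ls M$. Since the set of atoms for a fixed ball $B$ is a closed bounded convex subset of $L^2(\widehat B,\,dx\,dt/t)$, it is weakly compact and weakly closed, so a diagonal extraction yields a subsequence along which $\mu_{m,k_j}\rz\mu_m$ and $A_{m,k_j}\rightharpoonup A_m$ weakly in $L^2$ for every $m$, with each $A_m$ again an atom; Fatou's lemma gives $\sum_m\mu_m\ls M$, so $f\equiv\sum_m\mu_mA_m\in\wtw$ with $\|f\|_{\wtw}\ls M$. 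The convergence $\int_{\rnz}f_{k_j}g\,\frac{dx\,dt}{t}\rz\int_{\rnz}fg\,\frac{dx\,dt}{t}$ for fixed $g\in\twc$ then follows by splitting off the finitely many balls of the family that are not too large and meet the support of $g$, and controlling the remaining tail uniformly in $j$ via $|\int a\,g\,\frac{dx\,dt}{t}|\le\|a\|_{T^2_2(\rnz)}\|g\|_{T^2_2(\rnz)}\ls|B|^{-1/2}\|g\|_{T^2_2(\rnz)}$ for atoms over large balls $B$. It is exactly this combination of atom reassignment, weak $L^2$ compactness of atom sets, and tail estimates that your proposal lacks, and it constitutes the real content of the lemma.
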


\begin{thm}\label{t4.2}
Let $\oz$ satisfy the assumption (c).
Then $(\twv)^\ast$, the dual space of the space $\twv$, coincides
with $\wtw$ in the following sense:

(i) For any  $g\in \wtw$, define the linear functional $\ell$ by setting,
for all $f\in \twv$,
\begin{equation}\label{4.4}
\ell(f)\equiv \int_{\rnz}f(x,t)g(x,t)\frac{\,dx\,dt}{t}.
\end{equation}
Then there exists a positive constant $C$, independent of $g$,
such that $$\|\ell\|_{(\twv)^\ast}\le
C\|g\|_{\wtw}.$$

(ii) Conversely, for any $\ell\in (\twv)^\ast$, there exists $g\in
\wtw$ such that \eqref{4.4} holds for all $f\in \twv$ and
$\|g\|_{\wtw}\le C\|\ell\|_{(\twv)^\ast}$, where $C$ is independent
of $\ell.$
\end{thm}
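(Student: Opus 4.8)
The plan is to treat the two inclusions separately, part (i) being the easy one. For (i), given $g=\sum_j\lz_ja_j\in\wtw$ with $T_\oz$-atoms $a_j$ supported in tents $\widehat{B_j}$ and $\sum_j|\lz_j|\ls\|g\|_{\wtw}$, I would first record the pointwise estimate that for any $T_\oz$-atom $a$ with $\supp a\subset\widehat B$ and any $f\in\twz$, the Cauchy--Schwarz inequality together with the size condition $\|a\|_{T^2_2(\rnz)}\le|B|^{-1/2}[\ro(|B|)]^{-1}$ and the definition of $\ccc_\ro$ gives
$$\lf|\int_{\rnz}a(x,t)f(x,t)\frac{\,dx\,dt}{t}\r|\le\|a\|_{T^2_2(\rnz)}\|f\chi_{\widehat B}\|_{T^2_2(\rnz)}\le\|f\|_{\twz}.$$
Summing over $j$ shows the series defining $\ell(f)=\int_{\rnz}fg\,\frac{dx\,dt}{t}$ converges absolutely with $|\ell(f)|\le\|f\|_{\twz}\sum_j|\lz_j|\ls\|g\|_{\wtw}\|f\|_{\twv}$, the norm on $\twv$ being the $\twz$ norm; this is the desired bound $\|\ell\|_{(\twv)^\ast}\le C\|g\|_{\wtw}$.

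For (ii), the crucial starting point is the elementary Fubini identity $\|f\|_{T^2_2(\rnz)}^2=c_n\int_{\rnz}|f(x,t)|^2\,\frac{dx\,dt}{t}$ for a dimensional constant $c_n$, so that $T^2_2(\rnz)$ is, up to a constant, the Hilbert space $L^2(\rnz,\frac{dx\,dt}{t})$; this self-duality of $L^2$ is the only functional-analytic input and replaces the argument of \cite{cw,wws}. Fix $\ell\in(\twv)^\ast$ and set $O_k\equiv\{(x,t):\,|x|<k,\ 1/k<t<k\}$. I would first check that for $f$ supported in $O_k$ one has $\|f\|_{\twz}\le C_k\|f\|_{L^2(\rnz,\frac{dx\,dt}{t})}$: only balls with $r_B\gs1/k$ contribute to $\ccc_\ro(f)$, and on that range $\inf\ro(|B|)|B|^{1/2}>0$ since $\ro$ is positive, continuous, and of nonnegative lower type. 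Hence every $f\in T^2_{2,c}(\rnz)$ supported in $O_k$ lies in $\twc\subset\twv$, and $\ell$ restricts to a bounded functional on $L^2(O_k,\frac{dx\,dt}{t})$. The Riesz representation theorem then produces $g_k\in L^2(O_k,\frac{dx\,dt}{t})$ with $\ell(f)=\int_{O_k}fg_k\,\frac{dx\,dt}{t}$ for all such $f$; uniqueness and $O_k\subset O_{k+1}$ force $g_{k+1}=g_k$ on $O_k$, so the $g_k$ glue to a single measurable $g$ on $\rnz$ with $\ell(f)=\int_{\rnz}fg\,\frac{dx\,dt}{t}$ for every $f\in\twc$.

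It remains to upgrade $g$ from a locally $L^2$ function to an element of $\wtw$ with controlled norm. Here I would put $g_k\equiv g\chi_{O_k}$ and note $g_k\in T^2_{2,c}(\rnz)=\twc\subset\twl\subset\wtw$ by Lemma \ref{l4.1} (the inclusion $\twc\subset\twl$ being elementary, as the area function of a compactly supported $T^2_2$-function has bounded support and lies in $L(\oz)$). To bound $\{g_k\}$ uniformly I would invoke Lemma \ref{l4.3}: for any $h\in\twc$ with $\|h\|_{\twz}\le1$, the cutoff $h\chi_{O_k}$ again lies in $\twc$ and satisfies $\|h\chi_{O_k}\|_{\twz}\le\|h\|_{\twz}\le1$ since cutting by $\chi_{O_k}$ only shrinks each tent integral, whence
$$\lf|\int_{\rnz}hg_k\,\frac{dx\,dt}{t}\r|=\lf|\int_{\rnz}(h\chi_{O_k})g\,\frac{dx\,dt}{t}\r|=|\ell(h\chi_{O_k})|\le\|\ell\|_{(\twv)^\ast}.$$
Taking the supremum and using Lemma \ref{l4.3} gives $\|g_k\|_{\wtw}\le\|\ell\|_{(\twv)^\ast}$ for all $k$. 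Thus $\{g_k\}$ is bounded in $\wtw$, so Lemma \ref{l4.4} furnishes a subsequence converging weak-$\ast$ against every $h\in\twc$ to a limit in $\wtw$; since $\int hg_k\,\frac{dx\,dt}{t}=\int hg\,\frac{dx\,dt}{t}$ for $k$ large once $h$ is supported in a fixed $O_{k_0}$, that limit must equal $g$, whence $g\in\wtw$. Applying Lemma \ref{l4.3} to $g$ itself then yields $\|g\|_{\wtw}=\sup_{h\in\twc,\,\|h\|_{\twz}\le1}|\ell(h)|\le\|\ell\|_{(\twv)^\ast}$. Finally, since $\twc$ is dense in $\twv$ and both $\ell$ and $f\mapsto\int_{\rnz}fg\,\frac{dx\,dt}{t}$ are continuous on $\twv$ (the latter by part (i)), the representation \eqref{4.4} extends from $\twc$ to all of $\twv$.

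The step I expect to be the main obstacle is precisely this last passage from the locally defined $g$ to a genuine element of $\wtw$: it is the uniform $\wtw$-bound on the truncations $g\chi_{O_k}$ that makes the weak-$\ast$ compactness of Lemma \ref{l4.4} usable and that lets me identify the abstract limit with $g$. The supporting local comparison $\|f\|_{\twz}\ls_k\|f\|_{L^2(O_k)}$, though routine, also needs care, since it rests on the lower bound for $\ro(|B|)|B|^{1/2}$ over the admissible range of radii.
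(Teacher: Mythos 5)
Your proposal is correct and takes essentially the same route as the paper's own proof: the same exhaustion by the sets $O_k$, the same local comparison $\|f\|_{\twz}\le C_k\|f\|_{T^2_2(\rnz)}$ for $f$ supported in $O_k$, the Riesz representation on $L^2(O_k,\frac{dx\,dt}{t})$ with gluing by uniqueness, the uniform bound $\|g\chi_{O_k}\|_{\wtw}\le\|\ell\|_{(\twv)^\ast}$ via Lemma \ref{l4.3}, the compactness of Lemma \ref{l4.4} to identify $g$ as an element of $\wtw$, and a final density argument. The only cosmetic difference is in part (i), where you estimate directly against the atomic decomposition of $g$ rather than citing the bidual chain $\wtw\subset(\wtw)^{\ast\ast}\subset(\twv)^\ast$ from Lemma \ref{l4.2}; your computation is precisely the content of that lemma's proof, so the two arguments coincide.
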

\begin{proof}\rm By Lemma \ref{l4.2}, we obtain $(\wtw)^\ast=\twz\supset \twv$,
which further implies that $\wtw\subset(\wtw)^{\ast\ast}\subset
(\twv)^\ast$.

Conversely, let $\ell \in (\twv)^\ast$. Notice that
for any $f\in T_{2,c}^2(\rnz)$, we may assume that $\supp f\subset K$, where $K$ is a
compact set in $\rnz$. Then we have
$$\|f\|_{\twv}=\|f\|_{\twz}\le C(K)\|f\|_{T_2^2(\rnz)}.$$
 Thus, $\ell$ induces a bounded linear
functional on $T_{2,c}^2(K)$. Let $O_k$ be as in the proof of Lemma \ref{l4.3}.
 By the Riesz theorem, there exists a unique
$g_k\in L^2(O_k)$ such that for all $f\in L^2(O_k)$,
$$\ell (f)=\int_{{\rr}^{n+1}_+} f(x,\,t)g_k(x,\,t)\,\frac{dx\,dt}{t}.$$
Obviously, $g_{k+1}\chi_{O_k}=g_k$ for all $k\in \cn.$ Let
$g\equiv g_1\chi_{O_1}+\sum_{k=2}^\fz g_k\chi_{O_k\backslash
 O_{k-1}}.$ Then $g\in L^2_{\loc}({\rr}^{n+1}_+)$
and for any $f\in T_{2,c}^2({\rr}^{n+1}_+)$, we have
$$\ell(f)= \int_{{\rr}^{n+1}_+}f(y,t)g(y,t)\frac{\,dy\,dt}{t}.$$

Set $\wz g_k\equiv g\chi_{O_k}$. Then for each $k\in\cn$, obviously, we have
$\wz g_k\in T_{2,c}^2(\rnz)\subset\twl\subset\wtw$. Then by Lemma \ref{l4.3}, we
have
\begin{eqnarray*}
  \|\wz g_k\|_{\wtw}&&=\sup_{f\in \twc,\,\|f\|_{\twz}\le 1}
  \bigg|\int_{\rnz}f(x,t)g(x,t)\chi_{O_k}(x,t)\frac{\,dx\,dt}{t}\bigg|\\
  &&=\sup_{f\in \twc,\,\|f\|_{\twz}\le 1}|\ell(f\chi_{O_k})|\\
  &&\le \sup_{f\in \twc,\,\|f\|_{\twz}\le 1}\|\ell\|_{(\twv)^\ast}\|f\|_{\twz}
  \le \|\ell\|_{(\twv)^\ast}.
\end{eqnarray*}
Thus, by Lemma \ref{l4.4}, there exists $\wz g\in\wtw$ and
$\{\wz g_{k_j}\}_j\subset \{\wz g_{k}\}_k$ such that for all $f\in\twc$,
$$\lim_{j\rightarrow\fz}\int_{\rnz}f(x,t)\wz g_{k_j}(x,t)\frac{\,dx\,dt}{t}=
\int_{\rnz}f(x,t)\wz g(x,t)\frac{\,dx\,dt}{t}.$$
On the other hand, notice that for sufficient large $k_j$, we have
$$\ell(f)=\int_{\rnz}f(x,t)g(x,t)\frac{\,dx\,dt}{t}=
\int_{\rnz}f(x,t)\wz g_{k_j}(x,t)\frac{\,dx\,dt}{t}
=\int_{\rnz}f(x,t)\wz g(x,t)\frac{\,dx\,dt}{t},$$
which implies that $g=\wz g$ almost everywhere, hence $g\in\wtw$.
By a density argument, we know that \eqref{4.4} also holds for $g$
and all $f\in\twv$.
\end{proof}

\subsection{Dual spaces of $\vmo$ }\label{s4.2}
\hskip\parindent In this subsection, we identify the dual space of
$\mathrm{VMO}_{\ro,\,L^\ast}(\rn)$ .

Now, let $L^{2}_c({\rr}^{n+1}_+)$ denote the set of
$L^{2}({\rr}^{n+1}_+)$ functions with compact support. Then for all
$f\in L^{2}_c({\rr}^{n+1}_+)$, define $\pi_{L,s,s_1}$ by setting,
for all $x\in\rn$,
\begin{equation}\label{4.5}
\pi_{L,s,s_1}(f)(x)\equiv
C_{m,s,s_1}\int^\fz_0Q_{s,\,t^m}(I-P_{s_1,\,t^m})(f(\cdot,t))(x)\frac{\,dt}{t},
\end{equation}
where $C_{m,s,s_1}$ is the same as in \eqref{3.2}.
From  \eqref{2.4}, it is easy to deduce that $\pi_{L,s,s_1}$ is well
defined on $L^{2}_c({\rr}^{n+1}_+)$; indeed, by the H\"older inequality,
for any $f\in L^{2}_c({\rr}^{n+1}_+)$, we have $\pi_{L,s,s_1}(f)\in L^2(\rn).$

The following notion of molecules is introduced in \cite{jyz};
see also \cite{adm2,ya2}.

\begin{defn}\label{dp4.2}
 Let $\oz$ satisfy the assumption (c) and $s\ge s_1\ge\lfr\frac
nm(\frac{1}{\wz p_0(\oz)}-1)\rf.$ A function $\az$  on $\rn$ is called
an $(\oz,s,s_1)$-molecule if there exists a
$T_\oz({\rr}^{n+1}_+)$-atom $a$ such that for all $x\in\rn$,
\begin{equation*}
\az(x)\equiv
\pi_{L,s,s_1}(a)(x)=C_{m,s, s_1}\int^\fz_0Q_{s,\,t^m}(I-P_{s_1,t^m})
(a(\cdot,\,t))(x)\frac{\,dt}{t}.
\end{equation*}
\end{defn}

The following result is essentially Theorem 4.3 and Theorem 4.6 in \cite{jyz}.

\begin{thm}\label{t4.3}
Let $\oz$ satisfy the assumption (c) and $s\ge s_1\ge\lfr\frac
nm(\frac{1}{\wz p_0(\oz)}-1)\rf.$ Then for all $f\in
H_{\oz,\,L}(\rn)$,
 there exist a family $\{\az_k\}_k$ of
$(\oz,s,s_1)$-molecules  and $\{\lz_k\}_k\subset \cc$ such that
$f=\sum_k\lz_k\az_k,$
where the series converges in $H_{\oz,\,L}(\rn).$
Furthermore, if define
\begin{equation*}
\widetilde{{\Lambda}}(\{\lz_k\az_k\}_k) \equiv\inf\bigg\{\Lambda(\{\mu_k
a_k\}_k)\bigg\},
\end{equation*}
where the infimum is taken over all possible
$T_{\oz}({\rr}^{n+1}_+)$-atoms $\{a_k\}_k$ and
$\{\mu_k\}_k\subset\cc$ such that
$\sum_k\mu_k\pi_{L,s,s_1}(a_k)=\sum_k\lz_k\az_k$ in $H_{\oz,L}(\rn)$, then
there exists a positive constant $C$ independent of $f$
 such that $\widetilde{{\Lambda}}(\{\lz_k\az_k\}_k)\le
C\|f\|_{H_{\oz,L}(\rn)}.$

Conversely, suppose that  $\{\az_k\}_{k}$ is a family of
$(\oz,s,s_1)$-molecules and $\{\lz_k\}_{k}\subset\cc$ such that
${{\Lambda}}(\{\lz_k a_k\}_k)<\fz$ with $\az_k=\pi_{L,s,s_1}(a_k)$ for all
$k.$ Then $\sum_k\lz_k\az_k\in H_{\oz,L}(\rn)$ with
\begin{equation*}
\lf\|\sum_k\lz_k\az_k\r\|_{H_{\oz,L}(\rn)} \le
C\widetilde{{\Lambda}}(\{\lz_k\az_k\}_k),
\end{equation*} where $C$ is
a positive constant independent of $\{\lz_k\}_k$ and $\{\az_k\}_{k}.$
\end{thm}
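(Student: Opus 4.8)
The plan is to realize both $\hw$ and the tent space $\twl$ as images of one another under two mutually inverse operators, and then to transport the atomic decomposition of $\twl$ (Theorem \ref{t4.1}) into a molecular decomposition of $\hw$. The two operators are the lifting $f\mapsto G_f$, $G_f(x,t)\equiv Q_{t^m}f(x)=t^mLe^{-t^mL}f(x)$, and the projection $\pi_{L,s,s_1}$ of \eqref{4.5}. The decisive structural facts are two. First, by the very definition of the Lusin area function $\cs_L$ in Definition \ref{dp2.1} (which already uses $Q_{t^m}$), one has $\ca(G_f)=\cs_L(f)$ pointwise, so that $G_f\in\twl$ with $\|G_f\|_{\twl}=\|f\|_{\hw}$ exactly. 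Second, the bounded $H_\fz$-calculus (Assumption (b)) together with the normalization \eqref{3.2} gives the reproducing identity $f=C_{m,s,s_1}\int_0^\fz Q_{s,\,t^m}(I-P_{s_1,t^m})Q_{t^m}f\,\frac{dt}{t}=\pi_{L,s,s_1}(G_f)$ on $L^2(\rn)$, whose dual-pairing form is exactly Lemma \ref{l3.1}.

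I would establish first the boundedness of $\pi_{L,s,s_1}\colon\twl\to\hw$, since this is the heart of the theorem and is used in both directions. By Theorem \ref{t4.1} and the subadditivity of the $L(\oz)$ quasi-norm, this reduces to a uniform estimate on a single $\twl$-atom $a$ supported in some $\widehat B$: one must show that the molecule $\az\equiv\pi_{L,s,s_1}(a)$ of Definition \ref{dp4.2} satisfies a controlled bound on $\|\cs_L(\az)\|_{L(\oz)}$. The mechanism is to split $\rn$ into the annuli $U_j(B)\equiv 2^jB\setminus 2^{j-1}B$ and to estimate $\|\cs_L(\az)\|_{L^2(U_j(B))}$ with geometric decay $2^{-j\ez}$ in $j$, using the off-diagonal kernel bounds \eqref{2.3}, \eqref{2.8} and the cancellation encoded in the factor $I-P_{s_1,t^m}=(I-e^{-t^mL})^{s_1+1}$; here one fixes $\ez\in(n\bz_1(\ro),\tz(L))$, which is possible precisely because $\wz p_0(\oz)>n/(n+\tz(L))$ forces $n\bz_1(\ro)<\tz(L)$. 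Assembling the annular pieces in the Orlicz norm, via the upper type $1$ and lower type $p_0(\oz)$ of $\oz$ and the resulting type $(\bz_0(\ro),\bz_1(\ro))$ of $\ro$ recorded after \eqref{2.10}, then yields $\|\pi_{L,s,s_1}(g)\|_{\hw}\ls\|g\|_{\twl}$ for all $g\in\twl$. Granting this, the converse direction is immediate: given molecules $\az_k=\pi_{L,s,s_1}(a_k)$ with $\Lambda(\{\lz_ka_k\}_k)<\fz$, set $g\equiv\sum_k\lz_ka_k\in\twl$ and conclude $\sum_k\lz_k\az_k=\pi_{L,s,s_1}(g)\in\hw$ with norm $\ls\wz\Lambda(\{\lz_k\az_k\}_k)$.

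For the forward (decomposition) direction I would take $f$ in the dense class $\wz H_{\oz,\,L}(\rn)=\hw\cap L^2(\rn)$ of Definition \ref{dp2.1}, lift it to $G_f=Q_{t^m}f\in\twl$ with $\|G_f\|_{\twl}=\|f\|_{\hw}$, and decompose $G_f=\sum_j\lz_ja_j$ by Theorem \ref{t4.1}, with the control \eqref{4.2}. Applying the reproducing identity $f=\pi_{L,s,s_1}(G_f)$ and the just-proved boundedness of $\pi_{L,s,s_1}$ (which lets the operator pass through the sum with convergence in $\hw$), I obtain $f=\sum_j\lz_j\az_j$ with $\az_j\equiv\pi_{L,s,s_1}(a_j)$ an $(\oz,s,s_1)$-molecule. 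Since $\{\lz_j,a_j\}_j$ is an admissible representation, the definition of $\wz\Lambda$ and \eqref{4.2} give $\wz\Lambda(\{\lz_j\az_j\}_j)\le\Lambda(\{\lz_ja_j\}_j)\ls\|f\|_{\hw}$. A density argument, approximating a general $f\in\hw$ by elements of $\wz H_{\oz,\,L}(\rn)$, then extends both the representation and the convergence in $\hw$ to the whole space.

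The main obstacle is the single-atom estimate together with its reassembly in the Orlicz norm. Unlike the homogeneous $L^p$ case, where the identity $\|\cdot\|_{L^p}^p$ is additive and exact scaling is available, here the non-homogeneity of $\oz$ forces one to use subadditivity and the upper/lower type bounds to sum the annular contributions, and the admissible decay exponent is tightly constrained by the critical condition $\wz p_0(\oz)>n/(n+\tz(L))$; this is exactly the mechanism of \eqref{3.3} in the proof of Theorem \ref{t3.2}. A secondary but genuine difficulty is to guarantee that every series converges in the norm of $\hw$ rather than only in $L^2(\rn)$ or in the sense of distributions, which is again secured by the off-diagonal estimates \eqref{2.3} and \eqref{2.8}, yielding absolute convergence of the molecular series in $\hw$.
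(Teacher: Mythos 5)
Your proposal is correct and follows essentially the same route as the paper's source for this result: the paper itself gives no proof of Theorem \ref{t4.3}, stating only that it is ``essentially Theorem 4.3 and Theorem 4.6 in \cite{jyz}'', and the argument in \cite{jyz} is precisely the one you reconstruct --- lifting $f$ to $Q_{t^m}f$ so that $\ca(Q_{t^m}f)=\cs_L(f)$, invoking the tent-space atomic decomposition of Theorem \ref{t4.1}, proving the single-atom (molecular) estimate for $\pi_{L,s,s_1}$ via annular decomposition with decay exponent $\ez\in(n\bz_1(\ro),\tz(L))$, reassembling in the Orlicz norm by subadditivity and the type bounds, and recovering $f=\pi_{L,s,s_1}(Q_{t^m}f)$ from the $H_\fz$-calculus with the normalization \eqref{3.2}, followed by a density argument. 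Your identification of the two genuine technical pressure points (the Orlicz reassembly under the constraint $\wz p_0(\oz)>n/(n+\tz(L))$, and convergence of the molecular series in $\hw$ rather than merely in $L^2(\rn)$) matches where the actual work lies in \cite{jyz}.
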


Let us now introduce the Banach completion of the space $\hw$.

\begin{defn}\label{dp4.3}
 Let the assumptions (a), (b) and (c) hold and $s\ge s_1\ge
\lfr\fss\rf.$
The space $\hv$ is defined to be the set of all $f=\sum_j\lz_j\az_j$,
where the series converges in $(\bmoz)^\ast$, $\{\lz_j\}_j\in
\ell^1$ and $\{\az_j\}_j$ are $(\oz,s,s_1)$-molecules. If $f\in\hv$, define
\begin{equation*}
  \|f\|_{\hv}=\inf\bigg\{\sum_j|\lz_j|\bigg\},
\end{equation*}
where the infimum is taken over all the possible decompositions of $f$ as above.
\end{defn}

By \cite[Lemma 3.1]{hm1} again, we see that $\hv$ is a Banach space.
Similarly to the proof of Lemma \ref{l4.1}, we have the
following embedding of $\hw$ into $\hv$. We omit the details here.

\begin{lem}\label{l4.5} Let the assumptions (a), (b) and (c) hold and $s\ge s_1\ge
\lfr\fss\rf$. Then there exists a positive constant $C$ such that for all $f\in
\hw$, $f\in\hv$ and
\begin{equation*}
 \|f\|_{\hv}\le  C\|f\|_{\hw}.
 \end{equation*}
\end{lem}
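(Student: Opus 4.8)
The plan is to reproduce the architecture of the proof of Lemma \ref{l4.1} almost verbatim, making three substitutions: the tent-space atomic decomposition (Theorem \ref{t4.1}) is replaced by the molecular decomposition of $\hw$ (Theorem \ref{t4.3}), the target dual $(\twz)^\ast$ is replaced by $(\bmoz)^\ast$, and the pairing against $\twz$ is replaced by the pairing against $\bmoz$. First I would fix $f\in\hw$ and apply Theorem \ref{t4.3} to write $f=\sum_k\lz_k\az_k$, converging in $\hw$, where each $\az_k=\pi_{L,s,s_1}(a_k)$ is an $(\oz,s,s_1)$-molecule associated with a $\twl$-atom $a_k$ supported in a tent $\widehat{B_k}$, and $\widetilde{\Lambda}(\{\lz_k\az_k\}_k)\ls\|f\|_{\hw}$. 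Exactly as in the proof of Lemma \ref{l4.1}, and using the explicit construction of the atoms in \cite{hsv}, I would normalize the generating atoms so that $\|a_k\|_{T^2_2(\rnz)}=|B_k|^{-1/2}[\ro(|B_k|)]^{-1}$ for every $k$.

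Since $\oz$ is of upper type $1$ and $\ro(t)=t^{-1}/\oz^{-1}(t^{-1})$, so that $|B|\oz(1/(|B|\ro(|B|)))=1$, the identical computation to that in Lemma \ref{l4.1} gives, for every $K\in\cn$,
\begin{equation*}
\sum_k|B_k|\oz\lf(\frac{|\lz_k|\,\|a_k\|_{T^2_2(\rnz)}}{(\sum_{|j|\le K}|\lz_j|)|B_k|^{1/2}}\r)\gs 1,
\end{equation*}
whence $\sum_{|k|\le K}|\lz_k|\ls\widetilde{\Lambda}(\{\lz_k\az_k\}_k)\ls\|f\|_{\hw}$; letting $K\to\fz$ yields $\sum_k|\lz_k|\ls\|f\|_{\hw}$.

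The decisive step, which I expect to be the main obstacle, is to show that the molecular series converges in $(\bmoz)^\ast$; this reduces to a uniform bound $\|\az_k\|_{(\bmoz)^\ast}\ls 1$. For this I would pass $\pi_{L,s,s_1}$ onto the $\bmoz$-factor through its formal adjoint, writing, for $g\in\bmoz$,
\begin{equation*}
\la\az_k,g\ra=C_{m,s,s_1}\int_{\rnz}a_k(x,t)\,Q^\ast_{s,t^m}(I-P^\ast_{s_1,t^m})g(x)\,\frac{dx\,dt}{t},
\end{equation*}
where the commutation of the operators (all functions of $L$, resp.\ $L^\ast$) legitimizes this order. Theorem \ref{t3.1} applied to $L^\ast$ then guarantees that $(x,t)\mapsto Q^\ast_{s,t^m}(I-P^\ast_{s_1,t^m})g(x)$ lies in $\twz$ with norm $\ls\|g\|_{\bmoz}$, and the H\"older inequality over $\widehat{B_k}$ together with the normalization of $a_k$ gives $|\la\az_k,g\ra|\ls\|a_k\|_{T^2_2(\rnz)}\,|B_k|^{1/2}\ro(|B_k|)\,\|g\|_{\bmoz}\ls\|g\|_{\bmoz}$, the desired uniform estimate. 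The two delicate points are the justification of the adjoint identity for a general (non-$L^2$) $g\in\bmoz$, handled by a density argument through $\hz\cap L^2(\rn)$ exactly as in Lemma \ref{l3.1}, and the correct localized control of the $\twz$-factor on the single tent $\widehat{B_k}$ via the definition of the $\twz$-norm.

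With the uniform bound in hand, $\sum_k|\lz_k|<\fz$ forces the series $\sum_k\lz_k\az_k$ to converge absolutely in the Banach space $(\bmoz)^\ast$; since this series also converges to $f$ in $\hw$ and $\hw\hookrightarrow(\hw)^{\ast\ast}=(\bmoz)^\ast$ by Remark \ref{r2.1}(iii), the two limits coincide. Hence $f\in\hv$ and, by the infimum in the definition of $\|\cdot\|_{\hv}$ together with the second paragraph, $\|f\|_{\hv}\le\sum_k|\lz_k|\ls\|f\|_{\hw}$, which is the claimed embedding.
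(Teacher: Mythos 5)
Your proposal is correct and is essentially the proof the paper intends: the paper omits the argument for Lemma \ref{l4.5}, saying only that it is ``similar to the proof of Lemma \ref{l4.1}'', and your three substitutions (Theorem \ref{t4.3} in place of Theorem \ref{t4.1}, convergence in $(\bmoz)^\ast$ in place of $(\twz)^\ast$, and the pairing of molecules against $\bmoz$ via Theorem \ref{t3.1}) constitute exactly that adaptation. In particular, the uniform bound $\|\az_k\|_{(\bmoz)^\ast}\ls 1$, which you obtain from the adjoint identity $\la \pi_{L,s,s_1}(a_k),g\ra=C_{m,s,s_1}\int_{\rnz}a_k(x,t)\,Q^\ast_{s,t^m}(I-P^\ast_{s_1,t^m})g(x)\,\frac{dx\,dt}{t}$ together with the H\"older inequality over $\widehat{B_k}$, is the same estimate the paper itself carries out in the proof of Lemma \ref{l4.8}.
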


As a corollary of Lemma \ref{l4.5}, we have the following basic properties of
the space $\hv$.

\begin{prop}\label{p4.1} Let the assumptions (a), (b) and (c) hold and $s\ge s_1\ge
\lfr\fss\rf$. Then

(i) The space $\hw$ is dense in $\hv$.

(ii) For any $\wz s\ge \wz s_1\ge \lfr\fss\rf$, the spaces $\hv$ and
$B_{\oz,L}^{\wz s, \wz s_1}(\rn)$ coincide with equivalent norms.
\end{prop}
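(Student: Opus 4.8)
The plan is to obtain both parts as consequences of the one–sided embedding in Lemma \ref{l4.5} together with the molecular decomposition of Theorem \ref{t4.3}, the engine being a uniform bound: every $(\oz,s,s_1)$-molecule has $\hw$-norm controlled by an absolute constant. To see this, I would take a molecule $\az=\pi_{L,s,s_1}(a)$, where $a$ is a $\twl$-atom supported in $\widehat B$ with $\int_{\widehat B}|a(x,t)|^2\,dx\,dt/t\le|B|^{-1}[\ro(|B|)]^{-2}$. Since $\oz$ is increasing and $|B|\oz(1/(|B|\ro(|B|)))=1$ by \eqref{2.10}, the atomic functional obeys $\Lambda(\{1\cdot a\})\le 1$; the converse part of Theorem \ref{t4.3} then gives $\az\in\hw$ with $\|\az\|_{\hw}\le C\wz\Lambda(\{1\cdot\az\})\le C\Lambda(\{1\cdot a\})\le C$, uniformly in $\az$. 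In particular every molecule lies in $\hw$.

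For (i), I would write a generic $f=\sum_j\lz_j\az_j\in\hv$ with $\{\lz_j\}_j\in\ell^1$ and set $f_N\equiv\sum_{j\le N}\lz_j\az_j$. Each $\az_j\in\hw$ by the previous step and $\hw$ is a linear space, so $f_N\in\hw$; meanwhile $f-f_N=\sum_{j>N}\lz_j\az_j$ is again an admissible molecular decomposition, so directly from the definition of the norm $\|f-f_N\|_{\hv}\le\sum_{j>N}|\lz_j|\to 0$ as $N\to\fz$. This yields density of $\hw$ in $\hv$.

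For (ii), fix $\wz s\ge\wz s_1\ge\lfr\fss\rf$ and apply Lemma \ref{l4.5} with the parameters $(\wz s,\wz s_1)$ to each molecule $\az_j$ occurring in a decomposition of $f\in\hv$: this gives $\|\az_j\|_{B_{\oz,L}^{\wz s,\wz s_1}(\rn)}\le C\|\az_j\|_{\hw}\le C'$, uniformly in $j$. Since $B_{\oz,L}^{\wz s,\wz s_1}(\rn)$ is complete and $\sum_j|\lz_j|\,\|\az_j\|_{B_{\oz,L}^{\wz s,\wz s_1}(\rn)}<\fz$, the series $\sum_j\lz_j\az_j$ converges in $B_{\oz,L}^{\wz s,\wz s_1}(\rn)$ to some $g$ with $\|g\|_{B_{\oz,L}^{\wz s,\wz s_1}(\rn)}\le C'\sum_j|\lz_j|$, and infimizing over decompositions gives $\|g\|_{B_{\oz,L}^{\wz s,\wz s_1}(\rn)}\le C'\|f\|_{\hv}$. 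To identify $g$ with $f$, I would observe that both $\hv$ and $B_{\oz,L}^{\wz s,\wz s_1}(\rn)$ embed continuously into the common ambient space $(\bmoz)^\ast$—bounding $\|\cdot\|_{(\bmoz)^\ast}$ through the molecular decomposition by $\sum_j|\lz_j|\,\|\az_j\|_{(\bmoz)^\ast}\ls\sum_j|\lz_j|$, using $\hw\hookrightarrow(\bmoz)^\ast$ from Remark \ref{r2.1}(iii)—so that both defining series converge to the same limit in $(\bmoz)^\ast$, forcing $g=f$. Exchanging the roles of $(s,s_1)$ and $(\wz s,\wz s_1)$ furnishes the reverse inequality, so the two spaces coincide with equivalent norms.

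The step I expect to be the main obstacle is precisely this set-theoretic identification in (ii): because the definition of $\hv$ only guarantees convergence of the molecular series in $(\bmoz)^\ast$, one cannot merely compare the two $B$-norms but must track every convergence inside the single ambient dual $(\bmoz)^\ast$ to legitimately conclude $g=f$. The uniform molecular estimate $\|\az\|_{\hw}\le C$ is what makes Lemma \ref{l4.5} applicable term by term, and it is what upgrades the one-directional embedding of Lemma \ref{l4.5} into the full two-sided norm equivalence.
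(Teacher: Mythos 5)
Your proposal is correct, and it rests on the same two pillars as the paper's own proof: the uniform bound $\|\az\|_{\hw}\ls 1$ for every $(\oz,s,s_1)$-molecule (via the converse part of Theorem \ref{t4.3}, where you helpfully spell out why $\Lambda(\{1\cdot a\})\le 1$, a point the paper takes for granted), and Lemma \ref{l4.5} applied with the parameters $(\wz s,\wz s_1)$ to each molecule; your part (i) is essentially identical to the paper's (finite sums of molecules lie in $\hw$ and are dense in $\hv$ by truncating the series). Where you genuinely diverge is the finishing step of (ii). The paper stays within definition-chasing: since $\az_j\in B_{\oz,L}^{\wz s,\wz s_1}(\rn)$ with norm $\ls 1$, Definition \ref{dp4.3} provides a re-decomposition $\az_j=\sum_k\lz_{j,k}\az_{j,k}$ into $(\oz,\wz s,\wz s_1)$-molecules with $\sum_k|\lz_{j,k}|\ls 1$, and flattening the double series $f=\sum_j\lz_j\{\sum_k\lz_{j,k}\az_{j,k}\}$ exhibits an admissible decomposition of $f$ directly. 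You instead sum $\sum_j\lz_j\az_j$ in the Banach space $B_{\oz,L}^{\wz s,\wz s_1}(\rn)$ itself, using its completeness (which the paper has via \cite[Lemma 3.1]{hm1}) and absolute summability, and then identify the limit $g$ with $f$ through the continuous embedding of both spaces into $(\bmoz)^\ast$. Each route buys something: the paper's avoids any appeal to completeness, but its claim that the flattened double series converges in $(\bmoz)^\ast$ quietly relies on exactly the absolute-summability and ambient-space bookkeeping that you make explicit; your route avoids the re-decomposition entirely and makes the set-theoretic identification $g=f$ rigorous, which is indeed (as you note) the place where the real content sits, at the modest cost of invoking completeness of the target space. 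Both arguments are sound.
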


\begin{proof}\rm
By Lemma \ref{4.5}, we have $\hw\subset B_{\oz,L}(\rn)$. On the other hand,
notice that the set of all finite linear combinations of $(\oz,s,s_1)$-molecules
is included in $\hw$ and is also dense in $\hv$. Thus, (i) holds.

To prove (ii), for any $f\in\hv$, by Definition \ref{dp4.3},
there exist $(\oz,s,s_1)-$molecules
$\{\az_j\}_j$ and $\{\lz_j\}_j\in \ell^1$ such that $f=\sum_j\lz_j\az_j$,
where the series converges in $(\bmor)^\ast$, and
$\sum_j|\lz_j|\ls\|f\|_{\hv}.$
By Theorem \ref{t4.3}, we have $\az_j\in\hw$ and
$\|\az_j\|_{\hw}\ls 1$, which together with
Lemma \ref{l4.5} implies that $\az_j\in B_{\oz,L}^{\wz s, \wz s_1}(\rn)$
and
$$\|\az_j\|_{B_{\oz,L}^{\wz s, \wz s_1}(\rn)}\ls \|\az_j\|_{\hw}\ls 1.$$
Then by Definition \ref{dp4.3} again,
there exists $(\oz,\wz s,\wz s_1)-$molecules $\{\az_{j,k}\}_k$ and
$\{\lz_{j,k}\}_k\in \ell^1$ such that
$\az_j=\sum_k\lz_{j,k}\az_{j,k}$, where the series converges
in $(\bmor)^\ast$ and $\sum_k|\lz_{j,k}|\ls 1$.
Thus, finally, we obtain that
$$f=\sum_j\lz_j\az_j=\sum_{j}\lz_j\lf\{\sum_{k}\lz_{j,k}\az_{j,k}\r\},$$
which holds in $(\bmor)^\ast$ and
$$\|f\|_{B_{\oz,L}^{\wz s, \wz s_1}(\rn)}\le\sum_{j}|\lz_j|
\lf\{\sum_k|\lz_{j,k}|\r\}\ls \sum_{j}|\lz_j|\ls \|f\|_{\hv}.$$
Thus, $\hv\subset B_{\oz,L}^{\wz s, \wz s_1}(\rn)$. By symmetry,
we also have $B_{\oz,L}^{\wz s, \wz s_1}(\rn)\subset\hv$,
which completes the proof of Proposition \ref{p4.1}.
\end{proof}

Proposition \ref{p4.1} (ii) implies that the space
$\hv$ with $s,\,s_1$ as in Definition \ref{dp4.3}
is independent of the choices of $s$ and $s_1$.
Based on this, from now on, for any
$s\ge s_1\ge \lfr\fss\rf$, we denote
the space $\hv$ simply by $B_{\oz,L}(\rn)$.
Also, by Proposition \ref{p4.1} (i), we call
the space $B_{\oz,L}(\rn)$ the Banach completion of
the Orlicz-Hardy space $\hw$; see also Definition
\ref{dp4.1} and its following remarks.

The proof of the following lemma is similar to that of Lemma \ref{l4.2}.
We omit the details.

\begin{lem}\label{l4.6} Let the assumptions (a), (b) and (c) hold.
Then $(B_{\oz,L}(\rn))^\ast$ coincides with $\bmoz$ in the following sense:

(i) For any  $g\in \mathrm{BMO}_{\ro,L^\ast}(\rn)$, the
linear functional $\ell$, which is initially defined on
$B_{\oz,L}(\rn)\cap L^2(\rn)$ by setting, for all $f\in B_{\oz,L}(\rn)\cap L^2(\rn)$,
\begin{equation}\label{4.6}
\ell(f)\equiv \int_{\rn}f(x)g(x)\,dx,
\end{equation}
has a unique extension to $B_{\oz,L}(\rn)$ with
$\|\ell\|_{(B_{\oz,L}(\rn))^\ast}\le
C\|g\|_{\mathrm{BMO}_{\ro,L^\ast}(\rn)},$ where $C$ is a
positive constant independent of $g.$

(ii) Conversely, for any $\ell\in (B_{\oz,L}(\rn))^\ast$, there
exists $g\in \mathrm{BMO}_{\ro,\,L^\ast}(\rn)$ such that
\eqref{4.6} holds for all $f\in B_{\oz,L}(\rn)\cap L^2(\rn)$ and
$\|g\|_{\mathrm{BMO}_{\ro,\,L^\ast}(\rn)}\le
C\|\ell\|_{(B_{\oz,L}(\rn))^\ast}$, where $C$ is independent of
$\ell.$
\end{lem}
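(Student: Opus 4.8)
The plan is to follow the two-sided scheme used for Lemma \ref{l4.2}, replacing the atomic machinery there by the molecular machinery of Theorem \ref{t4.3} and the tent-space duality $(\twl)^\ast=\twz$ by the Orlicz--Hardy duality $(\hw)^\ast=\bmoz$ recorded in Remark \ref{r2.1}(iii). Part (ii) will be the ``soft'' inclusion: Lemma \ref{l4.5} provides a continuous embedding $\hw\hookrightarrow B_{\oz,L}(\rn)$, so any $\ell\in(B_{\oz,L}(\rn))^\ast$ restricts to a bounded functional on $\hw$ with $\|\ell\|_{(\hw)^\ast}\ls\|\ell\|_{(B_{\oz,L}(\rn))^\ast}$; the duality $(\hw)^\ast=\bmoz$ then produces $g\in\bmoz$ with $\ell(f)=\int_{\rn}f(x)g(x)\,dx$ for all $f\in\hw\cap L^2(\rn)$ and $\|g\|_{\bmoz}\ls\|\ell\|_{(B_{\oz,L}(\rn))^\ast}$. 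Once part (i) is available, $\ell$ and $f\mapsto\int_{\rn}f(x)g(x)\,dx$ are two continuous functionals on $B_{\oz,L}(\rn)$ that agree on the dense subspace $\hw\cap L^2(\rn)$ (Proposition \ref{p4.1}(i)), hence they coincide, which yields \eqref{4.6} for all $f\in B_{\oz,L}(\rn)\cap L^2(\rn)$ and finishes (ii).

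Part (i) is the substantive inclusion $\bmoz\subset(B_{\oz,L}(\rn))^\ast$, and its heart is the single-molecule estimate
$$
\lf|\int_{\rn}\az(x)g(x)\,dx\r|\ls\|g\|_{\bmoz}
$$
for every $(\oz,s,s_1)$-molecule $\az=\pi_{L,s,s_1}(a)$, where $a$ is a $\twl$-atom supported in some $\widehat B$. I would expand $\az$ using \eqref{4.5}, apply Fubini to move the operators onto $g$ (they are functions of $L$ and hence commute, so the formal adjoint of $Q_{s,\,t^m}(I-P_{s_1,t^m})$ is $Q^\ast_{s,\,t^m}(I-P^\ast_{s_1,t^m})$ acting on $g$), and then invoke the Cauchy--Schwarz inequality on $\widehat B$:
$$
\lf|\int_{\rn}\az g\,dx\r|\ls\lf(\int_{\widehat B}|a(x,t)|^2\frac{\,dx\,dt}{t}\r)^{1/2}
\lf(\int_{\widehat B}|Q^\ast_{s,\,t^m}(I-P^\ast_{s_1,t^m})g(x)|^2\frac{\,dx\,dt}{t}\r)^{1/2}.
$$
The first factor is at most $|B|^{-1/2}[\ro(|B|)]^{-1}$ by the atom normalization, while Theorem \ref{t3.1} applied to $L^\ast$ identifies $|Q^\ast_{s,\,t^m}(I-P^\ast_{s_1,t^m})g|^2\,dx\,dt/t$ as a $\ro$-Carleson measure of mass $\ls|B|[\ro(|B|)]^2\|g\|_{\bmoz}^2$ over $\widehat B$, so the second factor is $\ls|B|^{1/2}\ro(|B|)\|g\|_{\bmoz}$; the two bounds cancel to give the displayed molecule estimate.

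Granting this, for $f=\sum_j\lz_j\az_j\in B_{\oz,L}(\rn)\cap L^2(\rn)$ I would sum the molecule estimate to obtain $|\int_{\rn}fg\,dx|\le\sum_j|\lz_j|\,|\int_{\rn}\az_j g\,dx|\ls\|g\|_{\bmoz}\sum_j|\lz_j|$, and then pass to the infimum over all molecular decompositions of $f$ (Definition \ref{dp4.3}) to get $|\ell(f)|\ls\|g\|_{\bmoz}\|f\|_{B_{\oz,L}(\rn)}$; density of $\hw\cap L^2(\rn)$ in $B_{\oz,L}(\rn)$ then gives the unique bounded extension asserted in (i).

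The main obstacle I anticipate is the bookkeeping around the single-molecule pairing: justifying the Fubini interchange (using that $a$ has compact support $\widehat B$ and that $g\in\cm(\rn)$ together with the molecular decay makes everything absolutely convergent), and, more delicately, verifying that for $f\in B_{\oz,L}(\rn)\cap L^2(\rn)$ the honest integral $\int_{\rn}fg\,dx$ coincides with $\sum_j\lz_j\int_{\rn}\az_j g\,dx$, since the defining series $\sum_j\lz_j\az_j$ converges a priori only in $(\bmoz)^\ast$. Absolute summability of $\sum_j|\lz_j|\,|\la\az_j,g\ra|$ should reconcile the two pairings, but this is precisely the point at which care is required. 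Everything else is a faithful transcription of the argument for Lemma \ref{l4.2}.
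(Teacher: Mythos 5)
Your proposal is correct and is essentially the paper's own argument: the paper in fact omits the proof of this lemma, saying only that it is ``similar to that of Lemma \ref{l4.2}'', and your two-sided scheme --- the soft inclusion $(\bw)^\ast\subset\bmoz$ via the embedding $\hw\hookrightarrow\bw$ (Lemma \ref{l4.5}) and the duality $(\hw)^\ast=\bmoz$, together with the quantitative inclusion via molecular decomposition, the per-molecule Cauchy--Schwarz/Carleson estimate from Theorem \ref{t3.1} applied to $L^\ast$, and $\ell^1$-summation --- is precisely the intended translation of that proof, with molecules and the Orlicz--Hardy duality replacing atoms and the tent-space duality (the same per-molecule pairing identity is also used by the paper in the proof of Lemma \ref{l4.8}). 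The convergence subtlety you flag is resolved exactly as in Lemma \ref{l4.2}: since the decomposition $f=\sum_j\lz_j\az_j$ converges in the norm of $(\bmoz)^\ast$ and evaluation at the fixed $g\in\bmoz$ is a continuous linear functional on $(\bmoz)^\ast$, the pairing with $g$ automatically commutes with the sum, which is all the argument needs.
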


\begin{lem}\label{l4.7}
Let $\oz$ satisfy the assumption (c). Then $T_{2,c}^2(\rnz)$ is dense in $\wtw$.
\end{lem}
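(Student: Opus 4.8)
The plan is to prove density in two stages: first reduce the problem to approximating a single $\twl$-atom, and then approximate each atom by its vertical truncations, which have compact support in $\rnz$.

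\textbf{Stage 1 (reduction to a single atom).} Given $f\in\wtw$ and $\bz>0$, by Definition \ref{dp4.1} I would fix a decomposition $f=\sum_j\lz_ja_j$ into $\twl$-atoms with $\sum_j|\lz_j|\le\|f\|_{\wtw}+\bz$. For $N\in\cn$, writing $f-\sum_{|j|\le N}\lz_ja_j=\sum_{|j|>N}\lz_ja_j$, the tail is again an admissible representation in $\wtw$ (the tail series still converges in $(\twz)^\ast$, as in the proof of Lemma \ref{l4.1}), so by the definition of the norm $\|f-\sum_{|j|\le N}\lz_ja_j\|_{\wtw}\le\sum_{|j|>N}|\lz_j|\to0$ as $N\to\fz$. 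Hence finite linear combinations of $\twl$-atoms are dense in $\wtw$, and since $T_{2,c}^2(\rnz)$ is a linear space it suffices to approximate a single $\twl$-atom in the $\wtw$-norm by functions in $T_{2,c}^2(\rnz)$.

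\textbf{Stage 2 (truncation).} Let $a$ be a $\twl$-atom with $\supp a\subset\widehat B$ for some ball $B\equiv B(x_B,r_B)$, so that $\int_{\widehat B}|a(x,t)|^2\frac{\,dx\,dt}{t}\le|B|^{-1}[\ro(|B|)]^{-2}$. Recalling that $\widehat B\subset\{(y,t):\,|y-x_B|\le r_B,\ 0<t\le r_B\}$ is bounded, I set $a_k\equiv a\chi_{\{t\ge1/k\}}$ for $k\in\cn$; then $a_k$ has support in the compact set $\overline{\widehat B}\cap\{t\ge1/k\}$ and lies in $T_2^2(\rnz)$, so $a_k\in T_{2,c}^2(\rnz)$. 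The error is $a-a_k=a\chi_{\{0<t<1/k\}}$, still supported in $\widehat B$.

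\textbf{Stage 3 (the key estimate).} The crux is to control $a-a_k$ in the $\wtw$-norm, and not merely in $T_2^2(\rnz)$. I would observe that $a-a_k$ is itself a scalar multiple of a single $\twl$-atom: setting
$$A_k\equiv\lf(\int_{\widehat B}|a-a_k|^2\frac{\,dx\,dt}{t}\r)^{1/2}\quad\text{and}\quad\mu_k\equiv A_k|B|^{1/2}\ro(|B|),$$
the function $(a-a_k)/\mu_k$ is supported in $\widehat B$ and (using that the $T_2^2(\rnz)$-norm equals the $L^2(dx\,dt/t)$-norm) has $T_2^2(\rnz)$-norm exactly $|B|^{-1/2}[\ro(|B|)]^{-1}$, hence is a $\twl$-atom. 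Therefore the single-term decomposition gives $\|a-a_k\|_{\wtw}\le|\mu_k|=A_k|B|^{1/2}\ro(|B|)$. Since $\int_{\widehat B}|a|^2\frac{\,dx\,dt}{t}<\fz$, the dominated convergence theorem yields $A_k^2=\int_{\widehat B}|a|^2\chi_{\{0<t<1/k\}}\frac{\,dx\,dt}{t}\to0$, so $\|a-a_k\|_{\wtw}\to0$ as $k\to\fz$. Combining the three stages shows that $T_{2,c}^2(\rnz)$ is dense in $\wtw$.

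The main obstacle is precisely Stage 3: passing from $T_2^2$-smallness of the truncation error to $\wtw$-smallness. The resolution exploits the fact that cutting off $a$ in the $t$-variable keeps the error supported in the same tent $\widehat B$, so it is automatically a rescaled $\twl$-atom whose coefficient is its $T_2^2(\rnz)$-norm multiplied by the fixed geometric factor $|B|^{1/2}\ro(|B|)$; this is what converts the elementary $L^2$ convergence into convergence in the completion norm.
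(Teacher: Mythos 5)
Your proof is correct, but it follows a genuinely different route from the paper's. The paper reduces the problem to showing that $T^2_{2,c}(\rnz)$ is dense in $\twl$ with respect to $\|\cdot\|_{\wtw}$ (since $\twl$ is dense in $\wtw$ by Lemma \ref{l4.1} and Definition \ref{dp4.1}), and then, for $g\in\twl$, truncates with $g_k\equiv g\chi_{O_k}$, where $O_k\equiv\{(x,t):\,|x|<k,\ 1/k<t<k\}$: it proves $\|g-g_k\|_{\twl}\to 0$ by applying the dominated convergence theorem to $\int_{\rn}\oz\lf(\ca(g-g_k)(x)/\lz\r)\,dx$ for each $\lz>0$ (using the continuity and upper type of $\oz$), and then transfers this to the $\wtw$-norm via the embedding $\|g-g_k\|_{\wtw}\ls\|g-g_k\|_{\twl}$ of Lemma \ref{l4.1}. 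You bypass the Orlicz norm entirely: you reduce to a single $\twl$-atom using the tail estimate built into Definition \ref{dp4.1}, truncate only in the $t$-variable (correctly noting that the tent $\widehat{B}$ is already bounded in $x$ and in $t$), and observe that the truncation error $a\chi_{\{0<t<1/k\}}$ is still supported in $\widehat{B}$, hence equals $A_k|B|^{1/2}\ro(|B|)$ times a $\twl$-atom; since a single atom pairs boundedly with $\twz$ (by the H\"older argument in the proof of Lemma \ref{l4.1}), one-term decompositions are admissible, so $\|a-a_k\|_{\wtw}\le A_k|B|^{1/2}\ro(|B|)\to 0$ by plain $L^2$ dominated convergence. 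The paper's proof is shorter given that Lemma \ref{l4.1} is already in hand, and it yields the slightly stronger fact that the truncations converge in the $\twl$-norm itself; yours is more elementary and self-contained, using only the definition of $\wtw$ and $L^2$ convergence, with the rescaled-atom observation doing the work that the Orlicz-level dominated convergence does in the paper. One cosmetic remark: the atom condition is stated via $\int_{\widehat{B}}|a(x,t)|^2\,\frac{dx\,dt}{t}$, which is exactly the quantity you compute, so your identification of it with the squared $T^2_2(\rnz)$-norm (the two differ by a dimensional constant under Fubini, a convention the paper itself adopts in proving Lemma \ref{l4.1}) is harmless.
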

\begin{proof}\rm
Since $\twl$ is dense in $\wtw$, to prove the lemma, it suffices to prove that
$T_{2,c}^2(\rnz)$ is dense in $\twl$ in the norm $\|\cdot\|_{\wtw}$.

For any $g\in \twl$, let $g_k\equiv g\chi_{O_k}$, where $O_k$ is the same as in the
proof of Lemma \ref{l4.3}. Then $g_k\in T_{2,c}^2(\rnz)$. By the dominated convergence
theorem and the continuity of $\oz$, we have that for any $\lz>0$,
$$\lim_{k\rightarrow\fz}\int_{\rn}\oz\lf(\frac{\ca(g-g_k)(x)}{\lz}\r)\,dx
=\int_{\rn}\lim_{k\rightarrow\fz}\oz\lf(\frac{\ca(g-g_k)(x)}{\lz}\r)\,dx=0,$$
which implies that $\lim_{k\rightarrow\fz}\|g-g_k\|_{\twl}=0$. Then, by Lemma
\ref{l4.1}, we have
$$\|g-g_k\|_{\wtw}\ls \|g-g_k\|_{\twl}\rightarrow 0,$$
as $k\rightarrow\fz$, which completes the proof of Lemma \ref{l4.7}.
\end{proof}

\begin{lem}\label{l4.8}
Let the assumptions (a), (b) and (c) hold and $s\ge s_1\ge
\lfr\frac nm(\frac{1}{\wz p_0(\oz)}-1)\rf.$ Then the  operator
$\pi_{L,s,s_1}$, initially defined on $L^2_c(\rnz)$, extends to a
bounded linear operator:

(i) from $T_{2}^p(\rnz)$ to $L^p(\rn)$, if $p\in(1,\fz)$; or

(ii) from $\twl$ to $\hw$; or

(iii) from $\wtw$ to $B_{\oz,L}(\rn)$.
\end{lem}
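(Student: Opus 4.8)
The plan is to establish the three mapping properties separately: part (i) is the one genuinely analytic assertion, which I would obtain by duality against $L^{p'}(\rn)$ using the quadratic estimate \eqref{2.4}, whereas parts (ii) and (iii) I would read off from the atomic and molecular machinery already assembled, the only real work there being to check that the resulting extensions are consistent with \eqref{4.5} on $L^2_c(\rnz)$. For (i), I fix $p\in(1,\fz)$ with $1/p+1/p'=1$. For $f\in L^2_c(\rnz)$ and $g\in L^{p'}(\rn)\cap L^2(\rn)$, Fubini together with the identity $Q_{s,\,t^m}(I-P_{s_1,t^m})=\psi(t^mL)$, where $\psi(\xi)\equiv\xi^{s+1}e^{-\xi}(1-e^{-\xi})^{s_1+1}\in\Psi(S^0_\nu)$, and the fact that $Q^\ast_{s,\,t^m}$ and $I-P^\ast_{s_1,t^m}$ are functions of $L^\ast$, yields
\[
\int_{\rn}\pi_{L,s,s_1}(f)(x)g(x)\,dx=C_{m,s,s_1}\int_{\rnz}f(x,t)\,G(x,t)\,\frac{dx\,dt}{t},
\]
with $G(x,t)\equiv Q^\ast_{s,\,t^m}(I-P^\ast_{s_1,t^m})g(x)$. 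The point is that $\ca(G)$ is a generalized Lusin-area function of $g$ associated with $L^\ast$; I would bound the right-hand side by the standard pointwise tent-space inequality $\int_{\rnz}|fG|\,dx\,dt/t\ls\int_{\rn}\ca(f)\ca(G)\,dx$ (see \cite{cms}) followed by the H\"older inequality, and then invoke the $L^{p'}(\rn)$-boundedness of $\ca(G)$ (which follows from \eqref{2.11} and the $H_\fz$-calculus, cf. \cite{adm2,dm,ya1}) to get the bound $\ls\|f\|_{T_2^p(\rnz)}\|g\|_{L^{p'}(\rn)}$. Taking the supremum over $g$ gives the estimate on $L^2_c(\rnz)$, and density of $T^2_{2,c}(\rnz)$ in $T_2^p(\rnz)$ completes (i); the case $p=2$ is immediate from \eqref{2.4} and is what makes $\pi_{L,s,s_1}$ well defined on $L^2_c(\rnz)$ in the first place.

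For (ii), given $f\in\twl$ I would invoke Theorem \ref{t4.1} to write $f=\sum_j\lz_j a_j$ with $T_\oz({\rr}^{n+1}_+)$-atoms $a_j$ and $\Lambda(\{\lz_j a_j\}_j)\ls\|f\|_{\twl}$ by \eqref{4.2}. Since each $\az_j\equiv\pi_{L,s,s_1}(a_j)$ is by definition an $(\oz,s,s_1)$-molecule, the converse half of Theorem \ref{t4.3} gives $\sum_j\lz_j\az_j\in\hw$ with $\|\sum_j\lz_j\az_j\|_{\hw}\ls\widetilde\Lambda(\{\lz_j\az_j\}_j)\le\Lambda(\{\lz_j a_j\}_j)\ls\|f\|_{\twl}$, so I would set $\pi_{L,s,s_1}(f)\equiv\sum_j\lz_j\az_j$. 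That this agrees with \eqref{4.5} on $L^2_c(\rnz)$ follows because for $f\in L^2_c(\rnz)$ the decomposition may be taken to converge in $T_2^2(\rnz)$ as well, on which $\pi_{L,s,s_1}$ is bounded into $L^2(\rn)$ by the case $p=2$ of (i).

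For (iii) I would lift the same scheme to $\wtw$. By Definition \ref{dp4.1} any $f\in\wtw$ has a decomposition $f=\sum_j\lz_j a_j$, converging in $(\twz)^\ast$, with $\{\lz_j\}_j\in\ell^1$ and $\sum_j|\lz_j|$ arbitrarily close to $\|f\|_{\wtw}$. Each $\az_j\equiv\pi_{L,s,s_1}(a_j)$ is an $(\oz,s,s_1)$-molecule with $\|\az_j\|_{\hw}\ls1$, hence $\|\az_j\|_{\bw}\ls1$ by Lemma \ref{l4.5}; as $\{\lz_j\}_j\in\ell^1$, the series $\sum_j\lz_j\az_j$ converges absolutely in $(\bmoz)^\ast$, so by Definition \ref{dp4.3} it lies in $\bw$ with $\|\sum_j\lz_j\az_j\|_{\bw}\le\sum_j|\lz_j|$, and infimizing over decompositions gives $\|\pi_{L,s,s_1}(f)\|_{\bw}\le\|f\|_{\wtw}$. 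Well-definedness, together with consistency with (ii) on the dense subspace $T^2_{2,c}(\rnz)$ (dense in $\wtw$ by Lemma \ref{l4.7}), I would check by pairing against $h\in\bmoz\cap L^2(\rn)$: the same Fubini identity as in (i) gives $\int_{\rn}\az_j(x)h(x)\,dx=C_{m,s,s_1}\la a_j,\Phi(h)\ra$ with $\Phi(h)\equiv Q^\ast_{s,\,t^m}(I-P^\ast_{s_1,t^m})h\in\twz$ by the $L^\ast$-analogue of Theorem \ref{t3.1}, so summing reduces $\la\pi_{L,s,s_1}(f),h\ra$ to $\la f,\Phi(h)\ra$, which depends only on $f$.

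I expect the main obstacle to be the analytic step in (i): justifying the tent-space duality and, above all, the $L^{p'}(\rn)$-boundedness of the generalized area function $\ca(G)$ for the higher-order, $(I-P^\ast_{s_1,t^m})$-twisted square function, which is only controlled quantitatively by \eqref{2.4} at the endpoint $p'=2$. Parts (ii) and (iii) involve no hard estimate; their subtlety lies purely in the bookkeeping—that the atomic and molecular extensions are well defined and agree with the original operator on $L^2_c(\rnz)$—which the pairing argument through Theorem \ref{t3.1} settles.
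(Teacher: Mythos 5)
Your overall strategy for part (iii) is essentially the paper's: the paper likewise takes an atomic decomposition $f=\sum_j\lz_ja_j$ in $\wtw$, pairs with $\bmoz$ through the Fubini identity and Theorem \ref{t3.1}, and concludes that $\pi_{L,s,s_1}(f)=\sum_j\lz_j\pi_{L,s,s_1}(a_j)$ holds in $(\bmoz)^\ast$, whence the norm bound. (For parts (i) and (ii) the paper gives no proof at all, simply citing \cite{ya2} and \cite{jyz}; your sketches follow the standard arguments of those references, and you correctly flag that the $L^{p'}(\rn)$-boundedness of the twisted square function in (i) is the one ingredient that requires the kernel bounds via \cite{adm2,ya1,ya2} rather than \eqref{2.4} alone.)

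The genuine gap is in your well-definedness step for (iii). You verify the identity $\la\sum_j\lz_j\az_j,h\ra=C_{m,s,s_1}\la f,\Phi(h)\ra$ only for $h\in\bmoz\cap L^2(\rn)$. But membership in $\bw$ (Definition \ref{dp4.3}) is a statement about convergence in $(\bmoz)^\ast$, and two elements of $(\bmoz)^\ast$ that agree on $\bmoz\cap L^2(\rn)$ need not coincide, because $\bmoz\cap L^2(\rn)$ is not norm dense in $\bmoz$: already in the classical setting, an $L^2(\rn)$ function has asymptotically vanishing averages over balls $B(0,R)$ as $R\to\fz$, so it cannot correct the mean oscillation of $\log|x|$ over such balls, and hence the BMO distance from $\log|x|$ to $L^2(\rn)\cap\mathrm{BMO}(\rn)$ is bounded below by a positive constant. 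Nor can you appeal to $(\vmoz)^\ast=\bw$ together with the density of $\vmoz\cap L^2(\rn)$ in $\vmoz$, since those are Theorem \ref{t4.4} and Lemma \ref{l4.10}, which come after this lemma in the paper's logical order; so the conclusion ``which depends only on $f$'' does not follow as stated. The repair is exactly what the paper does: pair against an arbitrary $g\in\bmoz$, which is legitimate because Theorem \ref{t3.1} gives $Q^\ast_{s,t^m}(I-P^\ast_{s_1,t^m})g\in\twz$ and the $\hw$--$\bmoz$ duality of \cite{jyz} (see Lemma \ref{l3.1}) realizes $\la\pi_{L,s,s_1}(a_j),g\ra$ as the corresponding tent-space integral. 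Alternatively, avoid the well-definedness question altogether, as the paper's structure does: prove the estimate $\|\pi_{L,s,s_1}(f)\|_{\bw}\ls\|f\|_{\wtw}$ only for $f\in T^2_{2,c}(\rnz)$, where $\pi_{L,s,s_1}(f)$ is already defined by \eqref{4.5} and lies in $\hw\subset\bw$ by part (ii), and then extend to all of $\wtw$ by density (Lemma \ref{l4.7}) and the completeness of $\bw$.
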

\begin{proof} (i) was established in \cite{ya2} and
(ii) was established in \cite{jyz}. Let us now prove (iii).

 Let $f\in T_{2,c}^2(\rnz).$ Since $T_{2,c}^2(\rnz)\subset\twl$, by (ii), we have
$\pi_{L,s,s_1}(f)\in \hw\subset\hv$. Moreover, since $\twl\subset\wtw$,
by Definition \ref{dp4.1}, there exist
$T_\oz(\rnz)$-atoms $\{a_j\}_j$ and $\{\lz_j\}_j\subset\cc$ such that
 $f=\sum_j \lz_ja_j$, where the series converges in $(\twz)^\ast$, and
$\sum_j|\lz_j|\ls\|f\|_{\wtw}.$
Thus, for any $g\in\bmoz$, by Theorem \ref{t3.1}, we obtain
\begin{eqnarray*}
\la\pi_{L,s,s_1}(f),g\ra&&=\int_{\rnz}f(x,t)Q^\ast_{s,t^m}
(I-P^\ast_{s_1,t^m})(g)(x)\,\frac{dx\,dt}{t}\\
&&=\sum_j\lz_j\int_{\rnz}a_j(x,t)Q^\ast_{s,t^m}
(I-P^\ast_{s_1,t^m})(g)(x)\,\frac{dx\,dt}{t}\\
&&=\sum_j\lz_j\la\pi_{L,s,s_1}(a_j),g\ra,
\end{eqnarray*}
which implies that $\pi_{L,s,s_1}(f)=\sum_j\lz_j\pi_{L,s,s_1}(a_j)$
holds in $(\bmoz)^\ast$ and hence,
 $$\|\pi_{L,s,s_1}(f)\|_{\hv}\le \sum_j|\lz_j|\ls \|f\|_{\wtw}.$$
Since by Lemma \ref{l4.7}, $T_{2,c}^2(\rnz)$ is dense in $\wtw$,
we then obtain (iii) by a density argument, and thus  complete the proof of Lemma
\ref{l4.8}.
\end{proof}

We also define the operator $\pi_{L}$ by setting, for all
$f\in L^{2}_c({\rr}^{n+1}_+)$ and $x\in\rn$,
\begin{equation*}
\pi_{L}(f)(x)\equiv
C_{m}\int^\fz_0Q_{t^m}(f(\cdot,t))(x)\frac{\,dt}{t},
\end{equation*}
where $C_{m}=4m$. Similarly to \eqref{4.5}, from \eqref{2.4},
it is easy to deduce that the operator $\pi_{L}$ is well defined.
\begin{lem}\label{l4.9}
Let the assumptions (a), (b) and (c) hold. Then the operator
$\pi_{L}$, initially defined on $L^2_c(\rnz)$, extends to a
bounded linear operator:

(i) from $T_{2}^p(\rnz)$ to $L^p(\rn)$, if $p\in(1,\fz)$; or

(ii) from $\twv$ to $\vmo$.
\end{lem}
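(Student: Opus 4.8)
The plan is to prove the two mapping properties separately, deducing each from duality together with the square function estimates \eqref{2.4} and \eqref{2.11}, and, for (ii), from the description of $\vmo$ in Definition \ref{dp3.1} (equivalently Theorem \ref{t3.2}). In both parts the common device is that $Q_{t^m}$ is formally self-adjoint with adjoint $Q^\ast_{t^m}=t^mL^\ast e^{-t^mL^\ast}$, so that testing $\pi_L(f)$ against a function moves the operator onto the test function and turns $\ca(Q^\ast_{t^m}g)$ into the Lusin area function $\cs_{L^\ast}(g)$.

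For (i), since $\pi_L$ is well defined from $L^2_c(\rnz)$ into $L^2(\rn)$ and $L^2_c(\rnz)$ is dense in $T^p_2(\rnz)$ for $p\in(1,\fz)$, it suffices to prove $\|\pi_L(f)\|_{L^p(\rn)}\ls\|f\|_{T^p_2(\rnz)}$ for $f\in L^2_c(\rnz)$. By the duality $(L^p)^\ast=L^{p'}$, I would test against $g\in L^{p'}(\rn)\cap L^2(\rn)$ and write $\int_{\rn}\pi_L(f)(x)g(x)\,dx=C_m\int_{\rnz}f(x,t)Q^\ast_{t^m}g(x)\frac{\,dx\,dt}{t}$. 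The classical tent-space pairing inequality (see \cite{cms}) bounds the right-hand side by $\|\ca(f)\|_{L^p(\rn)}\|\ca(Q^\ast_{t^m}g)\|_{L^{p'}(\rn)}$; since $\ca(Q^\ast_{t^m}g)=\cs_{L^\ast}(g)$ and $\cs_{L^\ast}$ is bounded on $L^{p'}(\rn)$ (with $p'\in(1,\fz)$) by \eqref{2.11} applied to $L^\ast$, this is $\ls\|f\|_{T^p_2(\rnz)}\|g\|_{L^{p'}(\rn)}$, which gives (i).

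For (ii) the first step is the analogous bound into the BMO space, $\|\pi_L(f)\|_{\bmor}\ls\|f\|_{\twz}$ for $f\in T^2_{2,c}(\rnz)$. Because $(\hz)^\ast=\bmor$ (Remark \ref{r2.1}(iii) with $L$ replaced by $L^\ast$) and $\pi_L(f)\in L^2(\rn)\subset\cm(\rn)$, I would again pass to the adjoint: for $g\in\hz\cap L^2(\rn)$ one has $\la\pi_L(f),g\ra=C_m\int_{\rnz}f(x,t)Q^\ast_{t^m}g(x)\frac{\,dx\,dt}{t}$, and the tent duality $(\twl)^\ast=\twz$ together with $\|Q^\ast_{t^m}g\|_{\twl}=\|\cs_{L^\ast}(g)\|_{L(\oz)}=\|g\|_{\hz}$ yields $|\la\pi_L(f),g\ra|\ls\|f\|_{\twz}\|g\|_{\hz}$; hence $\pi_L(f)\in\bmor$ with the stated bound. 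Since $\twv$ is the closure of $T^2_{2,c}(\rnz)$ in $T^\fz_{\oz,1}(\rnz)$, whose norm equals the $\twz$-norm, and since each of $\gz_1,\gz_2,\gz_3$ is subadditive and dominated by $\|\cdot\|_{\bmor}$ (so that $\vmo$ is a closed subspace of $\bmor$), it then suffices to prove $\pi_L(f)\in\vmo$ for every $f\in T^2_{2,c}(\rnz)$; the general case follows by density and the continuity of $\pi_L$ into $\bmor$.

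The remaining and main step is to verify $\gz_1(\pi_L f)=\gz_2(\pi_L f)=\gz_3(\pi_L f)=0$ for $f\in T^2_{2,c}(\rnz)$, which I would carry out along the lines of the proof of Corollary \ref{c3.1}, using (by Remark \ref{r3.1}) a conveniently large $s$. Writing $\supp f\subset\{|x|<R,\ 1/R<t<R\}$, the representation $\pi_L(f)=C_m\int_{1/R}^R t^mLe^{-t^mL}f(\cdot,t)\frac{\,dt}{t}$ shows $\pi_L(f)\in L^2(\rn)$ with $L^{s+1}\pi_L(f)\in L^2(\rn)$; since $f-P_{s,(r_B)^m}f=(I-e^{-(r_B)^mL})^{s+1}f$ and the analytic semigroup satisfies $\|(I-e^{-\tau L})^{s+1}h\|_{L^2}\ls\tau^{s+1}\|L^{s+1}h\|_{L^2}$, the small-ball average is $O((r_B)^{m(s+1)})$, and as $\ro(|B|)\gs|B|^{\bz_1(\ro)}$ when $r_B\to0$, taking $s$ large gives $\gz_1(\pi_L f)=0$. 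The identity $\gz_2(\pi_L f)=0$ follows from $\|P_{s,(r_B)^m}\pi_L f\|_{L^2}\ls\|\pi_L f\|_{L^2}$ and $|B|[\ro(|B|)]^2\to\fz$ as $r_B\to\fz$, exactly as for $\gz_2$ in Corollary \ref{c3.1}. The hardest is $\gz_3(\pi_L f)=0$: here I would first use the kernel bound \eqref{2.8} (with \eqref{2.2}) to obtain spatial decay $|\pi_L f(x)|\ls|x|^{-n-\ez}$ for $|x|$ large and some $\ez\in(0,\tz(L))$, with constant depending on $f$ and $R$, and then split balls $B\subset[B(0,c)]^\com$ into those with $r_B$ above a threshold (controlled by $\gz_2$) and those with small $r_B$ (controlled by the decay together with the smoothing estimate for $P_{s,(r_B)^m}$). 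This decay-and-smoothing bookkeeping for $\gz_3$, together with balancing the powers of $r_B$ against the type indices $\bz_0(\ro),\bz_1(\ro)$ of $\ro$, is the main technical obstacle; once it is in place, Definition \ref{dp3.1} gives $\pi_L(f)\in\vmo$, completing (ii).
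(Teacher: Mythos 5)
Part (i) of your proposal is fine: the tent--space pairing of \cite{cms} combined with the $L^{p'}(\rn)$-boundedness of $\cs_{L^\ast}$ from \eqref{2.11} is essentially the proof of the result the paper simply cites from \cite{dy2}.

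For part (ii) you take a genuinely different route. The paper works with an arbitrary $f\in\twv$ and proves a single Carleson-type estimate (its \eqref{4.7}): for every ball $B$, the normalized tent integral of $|Q_{s,\,t^m}(I-P_{2s_1,t^m})\pi_L(f)|^2$ over $\widehat B$ is $\ls\sum_{k\ge1}2^{-k\wz\ez}\sz_k(f,B)$, where $\sz_k(f,B)$ is a normalized integral of $|f|^2$ over $\widehat{2^{k+1}B}$; this is obtained by splitting $f=f\chi_{\widehat{4B}}+f\chi_{(\widehat{4B})^\com}$ and estimating the kernel of the composition $Q_{s,\,t^m}(I-P_{2s_1,t^m})Q_{\nu^m}$. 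Since $f\in\twv$, the quantities $\sz_k(f,B)$ satisfy the three vanishing conditions, so $Q_{s,\,t^m}(I-P_{2s_1,t^m})\pi_L(f)\in\twv$ and Theorem \ref{t3.2} concludes; one estimate delivers simultaneously the quantitative bound and the vanishing conditions, with no compact-support or spatial-decay considerations. Your plan---duality against $\hz$ for the $\bmor$ bound, reduction by density to $f\in T^2_{2,c}(\rnz)$ (a valid reduction, since $\vmo$ is closed in $\bmor$ and $\twv$ is the closure of $T^2_{2,c}(\rnz)$ in the $\twz$ norm), then direct verification of $\gz_1=\gz_2=\gz_3=0$ from Definition \ref{dp3.1}---avoids the composed-kernel estimates and makes the extension explicit; your $\gz_1$ and $\gz_2$ arguments are correct.

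However, the $\gz_3$ step, which you leave as a sketch, is a genuine gap, and as literally described it does not close. For a small ball $B\subset[B(0,c)]^\com$, the decay $|\pi_Lf(x)|\ls|x|^{-n-\ez}$ together with uniform boundedness of $P_{s,(r_B)^m}$ only gives a bound of the form $C(f)\,c^{-2(n+\ez)}[\ro(|B|)]^{-2}$ for the normalized square average; since the best available lower bound is $\ro(|B|)\gs|B|^{\bz_1(\ro)}$, and $\bz_1(\ro)>0$ in any genuinely Orlicz case (e.g.\ $\oz(t)=t^p$, $p<1$, where $\ro(t)=t^{1/p-1}$), the supremum of this bound over small $r_B$ is infinite for every fixed $c$. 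One must retain the gain $(r_B)^{m(s+1)}$ at the same time as the far-away smallness. Concretely, write $(I-e^{-\tau L})^{s+1}\pi_Lf=\tau^{s+1}\phi(\tau L)\lf(L^{s+1}\pi_Lf\r)$ with $\phi(z)\equiv((1-e^{-z})/z)^{s+1}$, note that $\phi(\tau L)=\tau^{-(s+1)}\int_0^\tau\cdots\int_0^\tau e^{-(u_1+\cdots+u_{s+1})L}\,du_1\cdots du_{s+1}$ has approximate-identity kernel bounds by Assumption (a), and establish the spatial decay of $L^{s+1}\pi_Lf$ (not merely of $\pi_Lf$), which does follow from \eqref{2.8} and the support of $f$ in $\{1/R<t<R\}$ by the same computation. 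With these two ingredients the bookkeeping closes for $s$ large, so the gap is fillable, but the missing objects are precisely the decay of $L^{s+1}\pi_Lf$ and the localized smoothing identity. A secondary caveat: in your duality step, the bound $|\la\pi_L(f),g\ra|\ls\|f\|_{\twz}\|g\|_{\hz}$ produces some representing element of $(\hz)^\ast$ in $\bmor$; identifying it with $\pi_L(f)$ (so as to conclude $\|\pi_L(f)\|_{\bmor}\ls\|f\|_{\twz}$) needs an additional argument, e.g.\ via Lemma \ref{l3.1} and the density of $\hz\cap L^2(\rn)$ in $L^2(\rn)$.
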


\begin{proof}\rm (i) was established in \cite[Lemma 4.3]{dy2}.
To prove (ii), let $ s_1\ge \lfr\frac nm(\frac{1}{\wz p_0(\oz)}-1)\rf$, $s\ge 2s_1$,
$\ez\in (n\bz_1(\ro), \tz(L))$ and $\wz\ez=(\ez-n\bz_1(\ro))/2$.
Suppose that  $f\in\twv$. To see $\pi_{L}(f)\in\vmo$,
 by Theorem \ref{t3.2}, it suffices to show that
$Q_{s,t^m}(I-P_{2s_1,t^m})\pi_{L}(f)\in \twv$. We first show
that for any ball $B$,
\begin{equation}\label{4.7}
\frac{1}{\ro(|B|)|B|^{1/2}}\bigg(\int_{\widehat
B}|Q_{s,\,t^m}(I-P_{2s_1,t^m})\pi_{L}(f)(x)|^2\frac{\,dx\,dt}{t}\bigg)^{1/2}\ls
\sum_{k=1}^\fz 2^{-k\wz\ez}\sz_k(f,B),
\end{equation}
where
$$\sz_k(f,B)\equiv\frac{1}{|2^kB|^{1/2}\ro(|2^kB|)}
\bigg(\int_{\widehat{2^{k+1}B}}|f(x,t)|^2\frac{\,dx\,dt}{t}\bigg)^{1/2}.$$
Once \eqref{4.7} is proved, then by an argument similar to that used in the proof
of Theorem \ref{t3.2} (see the proof of \eqref{3.4}), we obtain that
$Q_{s,\,t^m}(I-P_{2s_1,t^m})\pi_{L}(f)\in\twv$ and hence,
$\pi_{L}(f)\in\vmo$.

To prove \eqref{4.7}, let $f_1\equiv f\chi_{\widehat{4B}}$ and
$f_2\equiv f\chi_{(\widehat{4B})^\com}$. Then
\begin{eqnarray*}
  &&\bigg(\int_{\widehat
B}|Q_{s,\,t^m}(I-P_{2s_1,t^m})\pi_{L}(f)(x)|^2\frac{\,dx\,dt}{t}\bigg)^{1/2}
\\&&\quad\le \sum_{i=1}^2\bigg(\int_{\widehat
B}|Q_{s,\,t^m}(I-P_{2s_1,t^m})\pi_{L}(f_i)(x)|^2\frac{\,dx\,dt}{t}\bigg)^{1/2}
\equiv\sum_{i=1}^2 \mathrm{I}_i.
\end{eqnarray*}

For the term $\mathrm{I}_1$, by \eqref{2.4} and (i) with $p=2$, we obtain
 $$\mathrm{I}_1\ls \|\pi_{L}(f_1)\|_{L^2(\rn)}\ls
 \|f_1\|_{T^2_2(\rnz)}\sim |2B|^{1/2}\ro(|2B|)\sz_2(f,B).$$

To estimate $\mathrm{I}_2$, write
\begin{eqnarray*}
Q_{s,\,t^m}(I-P_{2s_1,t^m})\pi_{L}(f_2)(x)&&= \int_0^\fz
Q_{s,\,t^m}(I-P_{2s_1,t^m})Q_{\nu^m}(f_2)(x)\frac{\,d\nu}{\nu}\\
&&\equiv  \int_0^\fz \Psi_{t,\nu}(L)(f_2)(x)\frac{\,d\nu}{\nu}.
\end{eqnarray*}
It follows from \eqref{2.8} that $\psi_{t,\nu}$, the kernel of
$\Psi_{t,\nu}(L)$, satisfies that for all $x,\,y\in\rn$,
$$|\psi_{t,\nu}(x,y)|\ls
\frac{t^{m(s+1)}\nu^m}{(t+\nu)^{m(s+2)}}\frac{(t+\nu)^\ez}{(t+\nu+|x-y|)^{n+\ez}}
\ls \frac{t^{\ez-\wz\ez}\nu^{\wz\ez}}{(t+\nu+|x-y|)^{n+\ez}}.$$
Moreover, for $k\ge 2$ and $(x,\nu)\in
\widehat{2^{k+1}B}\backslash \widehat{2^{k}B}$, we have
$t+\nu+|x-y|\sim 2^kr_B$. By this and the H\"older inequality,
we deduce that
\begin{eqnarray*}
\mathrm{I}_2&\ls&
\sum_{k=2}^\fz\lf(\int_{\widehat
B}\bigg[\int_{\widehat{2^{k+1}B}\backslash
\widehat{2^{k}B}}|\psi_{t,\nu}(x,y)||f(y,\nu)|\frac{\,dyd\nu}{\nu}\bigg]^2\frac{\,dx
dt}{t}\r)^{1/2}\\
&\ls& \sum_{k=2}^\fz\lf(\int_{\widehat
B}\bigg[\int_{\widehat{2^{k+1}B}\backslash
\widehat{2^{k}B}}\frac{t^{\ez-\wz\ez}\nu^{\wz\ez}}
{(t+\nu+|x-y|)^{n+\ez}}|f(y,\nu)|\frac{\,dyd\nu}{\nu}\bigg]^2\frac{\,dx
dt}{t}\r)^{1/2}\\
&\ls&\sum_{k=2}^\fz\lf(\int_{\widehat
B}\bigg[\int_{\widehat{2^{k+1}B}\backslash
\widehat{2^{k}B}}\nu^{2\wz\ez}
\frac{\,dyd\nu}{\nu}\bigg]t^{2\ez-2\wz\ez}\frac{\,dx
dt}{t}\r)^{1/2}(2^kr_B)^{-n-\ez}|2^kB|^{1/2}\ro(|2^kB|)\sz_k(f,B)\\
&\ls&\sum_{k=2}^\fz\lf[(2^kr_B)^{n+2\wz\ez}
r_B^{n+2\ez-2\wz\ez}\r]^{1/2}(2^kr_B)^{-n/2-\ez}2^{kn\bz_1(\ro)}\ro(|B|)\sz_k(f,B)\\
&\ls&\sum_{k=2}^\fz
2^{-k(\ez-n\bz_1(\ro))/2}\ro(|B|)|B|^{1/2}\sz_k(f,B).
\end{eqnarray*}
Thus, \eqref{4.7} is proved, which completes the proof of Lemma
\ref{l4.9}.
\end{proof}

\begin{lem}\label{l4.10} Let the assumptions (a), (b) and (c) hold and
$\ro$ be as in \eqref{2.10}. Then $\vmo\cap L^2(\rn)$ is dense
in $\vmo$.
\end{lem}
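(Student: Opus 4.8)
Fix $s\ge 2s_1\ge s_0\ge\lfr\frac nm(\frac{1}{\wz p_0(\oz)}-1)\rf$ as in Theorem \ref{t3.2}, and let $f\in\vmo$. The plan is to transport $f$ into the tent space through the characterization of Theorem \ref{t3.2}, approximate it there by compactly supported functions, and transport the approximants back by the reproduction operator $\pi_L$. Concretely, by Theorem \ref{t3.2} the lifted function $F\equiv Q_{s,\,t^m}(I-P_{2s_1,t^m})f$ belongs to $\twv$, and by Theorem \ref{t3.1} it also lies in $\twz$. Since $\twv$ is by definition the closure of $\twc=T^2_{2,c}(\rnz)$ in the norm of $\twz$, I would pick $\{F_j\}_j\subset T^2_{2,c}(\rnz)$ with $\|F-F_j\|_{\twz}\rz0$, and set $f_j\equiv c_0\,\pi_L(F_j)$ with $c_0\equiv C_{m,s,2s_1}/(4m)$. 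As $F_j\in T^2_{2,c}(\rnz)\subset T^2_2(\rnz)$, Lemma \ref{l4.9}(i) with $p=2$ gives $\pi_L(F_j)\in L^2(\rn)$; as $F_j\in\twc\subset\twv$, Lemma \ref{l4.9}(ii) gives $\pi_L(F_j)\in\vmo$. Hence each $f_j\in\vmo\cap L^2(\rn)$.

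The crux is the reproducing identity
\[
f=c_0\,\pi_L(F)=c_0\,\pi_L\big(Q_{s,\,t^m}(I-P_{2s_1,t^m})f\big)\qquad\text{in }\bmor.
\]
To prove it I would pair against $g\in\hz\cap L^2(\rn)$. On one hand, Lemma \ref{l3.1} (used with its $s_1$ replaced by $2s_1$, legitimate since $s\ge2s_1$) gives $\int_\rn f(x)g(x)\,dx=C_{m,s,2s_1}\int_{\rnz}F(x,t)Q^\ast_{t^m}g(x)\frac{\,dx\,dt}{t}$. On the other hand, the definition of $\pi_L$, the adjoint relation $\la Q_{t^m}h,g\ra=\la h,Q^\ast_{t^m}g\ra$, and Fubini show, first for $F_j\in T^2_{2,c}(\rnz)$ and then after passing to the limit $F_j\rz F$ in $\twz$ (using that $Q^\ast_{t^m}g\in\twl$ for $g\in\hz\cap L^2(\rn)$, so the $\twz$--$\twl$ pairing is continuous in the first slot), that $\int_{\rnz}F(x,t)Q^\ast_{t^m}g(x)\frac{\,dx\,dt}{t}=\frac{1}{4m}\la\pi_L(F),g\ra$. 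Combining, $\la f,g\ra=c_0\la\pi_L(F),g\ra$ for all $g\in\hz\cap L^2(\rn)$. Since $f\in\bmor$ and $\pi_L(F)\in\vmo\subset\bmor$, and since $\bmor=(\hz)^\ast$ (Remark \ref{r2.1}(iii) with $L$ and $L^\ast$ interchanged) while $\hz\cap L^2(\rn)$ is dense in $\hz$, the two functionals coincide, which yields the identity.

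Granting it, the conclusion is immediate. By linearity of the extension of $\pi_L$ to $\twv$,
\[
\|f-f_j\|_{\vmo}=\|f-f_j\|_{\bmor}=c_0\big\|\pi_L(F-F_j)\big\|_{\vmo}\ls\|F-F_j\|_{\twv}=\|F-F_j\|_{\twz}\rz0,
\]
where the inequality is the boundedness of $\pi_L\colon\twv\rz\vmo$ from Lemma \ref{l4.9}(ii) and the last equality uses that $\twv$ carries the $\twz$-norm. Thus $f_j\rz f$ in $\vmo$ with each $f_j\in\vmo\cap L^2(\rn)$, which is the asserted density.

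The step I expect to be the main obstacle is the reproducing identity. One must justify the interchange of the $t$-integration with the $L^2$-pairing and the passage to the limit in $j$ when $F$ lies only in $\twz$ (not in $L^2(\rnz)$); this is exactly why I would verify $\int_{\rnz}F\,Q^\ast_{t^m}g\,\frac{dx\,dt}{t}=\frac{1}{4m}\la\pi_L(F),g\ra$ first on the dense class $T^2_{2,c}(\rnz)$ and then extend it by continuity of the $\twz$--$\twl$ duality and of $\pi_L$. The final upgrade from ``agreement as functionals on $\hz\cap L^2(\rn)$'' to ``equality in $\bmor$'' is precisely where the duality $(\hz)^\ast=\bmor$ and the density of $\hz\cap L^2(\rn)$ in $\hz$ enter.
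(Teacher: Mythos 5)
Your proposal is correct and follows essentially the same route as the paper's own proof: lift $f$ to $Q_{s,\,t^m}(I-P_{2s_1,t^m})f\in\twv$ via Theorem \ref{t3.2}, approximate by compactly supported elements of the tent space (the paper uses the explicit truncations $\chi_{O_k}h$, you invoke the definition of $\twv$ as a closure, which is equivalent), map back with $\pi_L$ using Lemma \ref{l4.9}, and establish the reproducing identity $f=\frac{C_{m,s,2s_1}}{C_m}\pi_L\big(Q_{s,\,t^m}(I-P_{2s_1,t^m})f\big)$ in $\bmor$ by pairing against $\hz\cap L^2(\rn)$ via Lemma \ref{l3.1} and then using density together with the duality $(\hz)^\ast=\bmor$. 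The only differences are cosmetic (your justification of the limit interchange via the $\twl$--$\twz$ pairing versus the paper's dominated convergence), so nothing further is needed.
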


\begin{proof}\rm
Let $s_1\ge\lfr\frac nm(\frac{1}{\wz
p_0(\oz)}-1)\rf$ and $s\ge 2s_1$.
Notice that for any $f\in L^2(\rn)$, we have
$$f=\pi_{L,s,2s_1}(f)=C_{m,s,2s_1}\int_0^\fz
Q_{s,t^m}(I-P_{2s_1,t^m})Q_{t^m}f\frac{\,dt}{t}$$ in
$L^2(\rn)$, where $C_{m,s,2s_1}$ is as in \eqref{3.2}.

Let $g\in \vmo$. For any $f\in\hz\cap L^2(\rn)$, by Lemma \ref{l3.1}, we have
\begin{eqnarray*}\int_{\rn}f(x)g(x)\,dx&&=C_{m,s,2s_1}\int_{\rnz}
Q^\ast_{t^m}f(x)Q_{s,t^m}
(I-P_{2s_1,t^m})g(x)\,\frac{dx\,dt}{t}.
\end{eqnarray*}
Since $g\in \vmo$, by Theorem \ref{t3.2}, we know that $Q_{s,t^m}
(I-P_{2s_1,t^m})g\in\twv$. Let $O_k$ be as in the proof of Lemma \ref{4.3},
$h\equiv Q_{s,t^m}(I-P_{2s_1,t^m})g$ and
$h_k\equiv\chi_{O_k}h$ for each $k\in\cn$.
Then $h_k\in \twc=T_{2,c}^2(\rnz)$ and
$\|h_k-h\|_{\twz}\rightarrow0$, as $k\rightarrow\fz$.
Thus, by Lemma \ref{l4.9}, we have that $\pi_L(h_k)\in
 \vmo\cap L^2(\rn)$ and
 \begin{equation}\label{4.8}
\|\pi_L(h-h_k)\|_{\bmor}\ls \|(h-h_k)\|_{\twz}\rightarrow 0,
 \end{equation}
as $k\rightarrow\fz$. Then by the dominated
convergence theorem, we further have
\begin{eqnarray*}
\int_{\rn}f(x)g(x)\,dx&&=C_{m,s,2s_1}\int_{\rnz}
Q^\ast_{t^m}f(x)h(x,t)\,\frac{\,dx\,dt}{t}\\
&&=C_{m,s,2s_1}\lim_{k\rightarrow\fz}\int_{\rnz}
Q^\ast_{t^m}f(x)h_k(x,t)\,\frac{dx\,dt}{t}\\
&&=C_{m,s,2s_1}\lim_{k\rightarrow\fz}
\int_{\rn}f(x)\int_0^\fz Q_{t^m}(h_k)(x)\,\frac{dx\,dt}{t}\\
&&=\frac{C_{m,s,2s_1}}{C_m}\lim_{k\rightarrow\fz}
\la f,\pi_L(h_k)\ra=\frac{C_{m,s,2s_1}}{C_m}\la f,\pi_L(h)\ra.
\end{eqnarray*}
Since $\hz\cap L^2(\rn)$ is dense in the space $\hz$, we then obtain that
\begin{equation*}
g=\frac{C_{m,s,2s_1}}{C_m} \pi_{L}(h)=\frac{C_{m,s,2s_1}}{C_m}
\pi_{L}(Q_{s,t^m}(I-P_{2s_1,t^m})g)
\end{equation*}
in $\vmo$. Let $g_k\equiv\frac{C_{m,s,2s_1}}{C_m}\pi_L(h_k)$. Then
by Lemma \ref{l4.9} again, we have that $g_k\in \vmo\cap L^2(\rn)$ and
by \eqref{4.8}, $\|g-g_k\|_{\bmor}\rightarrow 0$, as $k\rightarrow \fz$,
which completes the proof of Lemma \ref{l4.10}.
\end{proof}

The following is the main result of this paper.
Recall that by Definition \ref{dp3.1},
$$\mathrm{VMO}_{\ro,\,L^\ast}(\rn)\subset
\mathrm{BMO}_{\ro,\,L^\ast}(\rn).$$
The symbol $\la\cdot,\cdot\ra$ in the following theorem
means the duality between $\mathrm{BMO}_{\ro,\,L^\ast}(\rn)$
and the space $B_{\oz, L}(\rn)$ in the sense of Lemma \ref{l4.6}.

\begin{thm}\label{t4.4}
Let the assumptions (a), (b) and (c) hold.
Then $(\mathrm{VMO}_{\ro,\,L^\ast}(\rn))^\ast=
B_{\oz, L}(\rn)$ in the following sense:

(i) For any $g\in B_{\oz, L}(\rn)$, define the linear functional $\ell$ by setting,
for all $f\in \mathrm{VMO}_{\ro,\,L^\ast}(\rn)$,
\begin{equation}\label{4.9}
\ell(f)\equiv \la f, g\ra.
\end{equation}
Then there exists a positive constant $C$,
independent of $g$, such that $\|\ell\|_{(\mathrm{VMO}_{\ro,\,L^\ast}(\rn))^\ast}
\le C\|g\|_{B_{\oz, L}(\rn)}$.

(ii) Conversely, for any $\ell\in(\mathrm{VMO}_{\ro,\,L^\ast}(\rn))^\ast$,
there exists $g\in B_{\oz, L}(\rn)$ such that \eqref{4.9} holds and
there exists  a positive constant $C$, independent of $\ell$,
such that  $\|g\|_{B_{\oz, L}(\rn)}\le
C\|\ell\|_{(\mathrm{VMO}_{\ro,L^\ast}(\rn))^\ast}$.
\end{thm}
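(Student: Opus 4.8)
The plan is to deduce part (i) immediately from Lemma \ref{l4.6}, and to prove the substantive direction (ii) by transporting $\ell$ through the tent-space duality of Theorem \ref{t4.2} and then pulling the resulting tent function back to $\rn$ by the operator $\pi_{L,s,2s_1}$. Throughout I fix indices $s_1\ge s_0\ge\lfr\fss\rf$ and $s\ge 2s_1$, as in Theorem \ref{t3.2}.

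For (i), fix $g\in B_{\oz,L}(\rn)$. Since $\vmoz\subset\bmoz$, and since by Lemma \ref{l4.6} every element of $\bmoz$ acts as a bounded functional on $B_{\oz,L}(\rn)$, we have for all $f\in\vmoz$ that $|\la f,g\ra|\ls\|f\|_{\bmoz}\|g\|_{B_{\oz,L}(\rn)}=\|f\|_{\vmoz}\|g\|_{B_{\oz,L}(\rn)}$. Hence the functional $\ell$ in \eqref{4.9} is bounded with $\|\ell\|_{(\vmoz)^\ast}\ls\|g\|_{B_{\oz,L}(\rn)}$, which is (i).

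For (ii), the key is to embed $\vmoz$ isometrically into $\twv$. Applying Theorem \ref{t3.2} to $L^\ast$ (which satisfies the same assumptions as $L$), together with the norm equivalence of Theorem \ref{t3.1}, the map $\Phi(f)\equiv Q^\ast_{s,t^m}(I-P^\ast_{2s_1,t^m})f$ sends $\vmoz$ into $\twv$ with $\|\Phi(f)\|_{\twz}\sim\|f\|_{\bmoz}=\|f\|_{\vmoz}$; in particular $\Phi$ is injective. Consequently $\tilde\ell(\Phi(f))\equiv\ell(f)$ is a well-defined bounded functional on the subspace $\Phi(\vmoz)\subset\twv$ of norm $\ls\|\ell\|_{(\vmoz)^\ast}$. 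I then extend $\tilde\ell$ by the Hahn--Banach theorem to all of $\twv$ without increasing its norm, and invoke Theorem \ref{t4.2} to produce $G\in\wtw$ with $\|G\|_{\wtw}\ls\|\ell\|_{(\vmoz)^\ast}$ such that $\tilde\ell(F)=\int_{\rnz}F(x,t)G(x,t)\frac{\,dx\,dt}{t}$ for all $F\in\twv$. Finally I set $g\equiv\frac{1}{C_{m,s,2s_1}}\pi_{L,s,2s_1}(G)$; Lemma \ref{l4.8}(iii) gives $g\in B_{\oz,L}(\rn)$ with $\|g\|_{B_{\oz,L}(\rn)}\ls\|G\|_{\wtw}\ls\|\ell\|_{(\vmoz)^\ast}$.

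The remaining and most delicate task is to verify the representation $\ell(f)=\la f,g\ra$, which couples the abstract Hahn--Banach functional on the tent space with the concrete $\bmoz$--$B_{\oz,L}(\rn)$ pairing of Lemma \ref{l4.6}; I expect this to be the main obstacle. First, for $G$ in the class $T_{2,c}^2(\rnz)$ one has $\pi_{L,s,2s_1}(G)\in\hw\cap L^2(\rn)$, so by Fubini and the adjoint relation $\int(Tf)h=\int f(T^\ast h)$ a direct computation gives $\la f,g\ra=\int_{\rn}f(x)g(x)\,dx=\int_{\rnz}\Phi(f)(x,t)G(x,t)\frac{\,dx\,dt}{t}=\tilde\ell(\Phi(f))=\ell(f)$; here the factor $C_{m,s,2s_1}$ produced by $\pi_{L,s,2s_1}$ is exactly cancelled by the normalization of $g$. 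For general $G\in\wtw$ I approximate $G$ by $G_k\in T_{2,c}^2(\rnz)$ in the $\wtw$ norm (possible by Lemma \ref{l4.7}). On the one hand $g_k\to g$ in $B_{\oz,L}(\rn)$ by the continuity of $\pi_{L,s,2s_1}$ in Lemma \ref{l4.8}(iii), so $\la f,g_k\ra\to\la f,g\ra$ by Lemma \ref{l4.6}; on the other hand $\Phi(f)\in\twz=(\wtw)^\ast$ by Lemma \ref{l4.2}, whence $\int_{\rnz}\Phi(f)G_k\frac{\,dx\,dt}{t}\to\int_{\rnz}\Phi(f)G\frac{\,dx\,dt}{t}$. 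Passing to the limit yields $\ell(f)=\la f,g\ra$ for all $f\in\vmoz$, together with the claimed norm bound, completing the proof.
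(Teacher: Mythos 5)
Your proposal is correct, but in the substantive direction (ii) it takes a genuinely different route from the paper (part (i) is handled identically in both, via Lemma \ref{l4.6} and the inclusion $\vmoz\subset\bmoz$). The paper never invokes Hahn--Banach: instead of extending a functional off the embedded copy $\Phi(\vmoz)\subset\twv$, it produces a functional on \emph{all} of $\twv$ at once by composing $\ell$ with the retraction $\pi_{L^\ast}$, whose boundedness from $\twv$ to $\vmoz$ is exactly Lemma \ref{l4.9}(ii) applied to $L^\ast$, and it then recovers $\ell(f)$ through the Calder\'on-type reproducing formula $f=\frac{C_{m,s,2s_1}}{C_m}\pi_{L^\ast}(Q^\ast_{s,t^m}(I-P^\ast_{2s_1,t^m})f)$ established in the proof of Lemma \ref{l4.10}; it works first for $f\in\vmoz\cap L^2(\rn)$ and concludes with the density statement of Lemma \ref{l4.10}, and its final limiting argument truncates the atomic series of $g\in\wtw$ (partial sums $g_k=\sum_{j\le k}\lz_ja_j$) rather than truncating $G$ via Lemma \ref{l4.7} as you do. What your route buys: you bypass Lemmas \ref{l4.9} and \ref{l4.10} entirely --- the two longest kernel-estimate arguments of Subsection \ref{s4.2} --- and your representation $\ell(f)=\int_{\rnz}\Phi(f)G\,\frac{dx\,dt}{t}$ holds for every $f\in\vmoz$ from the outset, not merely on a dense subclass. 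What the paper's route buys: it is constructive (no appeal to Hahn--Banach), in keeping with its stated preference for elementary duality arguments, and it reuses machinery (Lemmas \ref{l4.9}, \ref{l4.10}) needed elsewhere. Two points you should make explicit to be airtight: first, $\Phi$ is injective only modulo elements of $\bmoz$-norm zero, but $\tilde\ell$ is still well defined on $\Phi(\vmoz)$ because $\ell$ annihilates such elements; second, your Fubini/adjoint identity $\int_{\rn}f\,\pi_{L,s,2s_1}(G_k)\,dx=C_{m,s,2s_1}\int_{\rnz}\Phi(f)\,G_k\,\frac{dx\,dt}{t}$ is applied to $f\in\vmoz\subset\cm(\rn)$ which need not lie in $L^2(\rn)$, so it requires justification via the kernel bounds \eqref{2.8} and the growth control built into $\cm(\rn)$ --- this is the same interchange the paper itself performs, without further comment, in the proof of Lemma \ref{l4.8}(iii), so your argument is at the paper's own level of rigor here.
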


\begin{proof}\rm
Since $\mathrm{VMO}_{\ro,\,L^\ast}(\rn)\subset\bmoz$,
from Lemma \ref{l4.6}, it is easy to see that
 $B_{\oz, L}(\rn)\subset(\mathrm{VMO}_{\ro,\,L^\ast}(\rn))^\ast$.

Conversely, let $s_1\ge\lfr\fss\rf$, $s\ge 2s_1$ and
$\ell\in (\mathrm{VMO}_{\ro,\,L^\ast}(\rn))^\ast$.
For any $f\in \mathrm{VMO}_{\ro,\,L^\ast}(\rn)\cap L^2(\rn)$, by
Theorem \ref{t3.2}, we have $Q^\ast_{s,t^m}(I-P^\ast_{2s_1,t^m})f\in\twv$. On the
other hand, from Lemma \ref{l4.8},
it follows that $\ell\circ\pi_{L^\ast}\in (\twv)^\ast$. By this
together with Theorem \ref{t4.2}, we obtain that there
exists $g\in \wtw$ such that
\begin{eqnarray*}\ell(f)&&=\frac{C_{m,s,2s_1}}{C_{m}}
\ell\circ\pi_{L^\ast}(Q^\ast_{s,t^m}(I-P^\ast_{2s_1,t^m})f)\\
&&=\frac{C_{m,s,2s_1}}{C_{m}}\int_{\rnz}Q^\ast_{s,t^m}
(I-P^\ast_{2s_1,t^m})f(x)g(x,t)\frac{\,dx\,dt}{t}.
\end{eqnarray*}
Since $g\in \wtw$, there exist $T_\oz$-atoms $\{a_j\}_j$ and $\{\lz\}_j\subset\cc$
such that $g=\sum_j\lz_ja_j$, where the series converges
in $(\twz)^\ast$, and $\sum_j|\lz_j|\ls\|g\|_{\wtw}$.
For each $k\in\cn$, let $g_k\equiv\sum_{j=1}^k\lz_ja_j$. Then $g_k\in T_2^2(\rnz)
\cap \twl$.
Moreover, by Lemma \ref{l4.8}, we obtain that $\pi_{L,s,2s_1}(g)\in
B_{\oz, L}(\rn)$,
$\pi_{L,s,2s_1}(g_k)\in B_{\oz, L}(\rn)\cap L^2(\rn)$ and
$$ \|\pi_{L,s,2s_1}(g)-\pi_{L,s,2s_1}(g_k)\|_{B_{\oz, L}(\rn)}
\ls\|g-g_k\|_{\wtw}\rightarrow0,$$
as $k\rightarrow\fz$. Thus, by the dominated convergence theorem,
we further obtain that
\begin{eqnarray*}\ell(f)&&=\frac{C_{m,s,2s_1}}{C_{m}}\int_{\rnz}Q^\ast_{s,t^m}
(I-P^\ast_{2s_1,t^m})f(x)g(x,t)\frac{\,dx\,dt}{t}\\
&&\sim\lim_{k\rightarrow\fz}\int_{\rnz}Q^\ast_{s,t^m}
(I-P^\ast_{2s_1,t^m})f(x)g_k(x,t)\frac{\,dx\,dt}{t}\\
&&\sim\lim_{k\rightarrow\fz}\int_{\rn}f(x)\pi_{L,s,2s_1}(g_k)(x)\,dx
\sim\lim_{k\rightarrow\fz} \la f, \pi_{L,s,2s_1}(g_k)\ra\\
&&\sim \sum_j\lz_j\la f,\pi_{L,s,2s_1}(a_j)\ra\sim \la f,\pi_{L,s,2s_1}(g)\ra.
\end{eqnarray*}
Then a density argument via Lemma \ref{l4.10}
completes the proof of Theorem \ref{t4.4}.
\end{proof}

\begin{rem}\label{r4.1}\rm
By Proposition \ref{p4.1} (ii), we know that for any $s\ge s_1\ge \lfr\fss\rf$,
$B_{\oz,L}(\rn)=\hv$. Thus, by Lemma \ref{l4.6},
\eqref{4.9} can be further understood as
\begin{equation*}
\ell(f)\equiv\la f, g\ra=\sum_j \lz_j\int_{\rn} f(x)\az_j(x)\,dx,
\end{equation*}
where $\{\az_j\}_j$ is a sequence of $(\oz,s,s_1)$-molecules
and $\{\lz_j\}_j\in \ell^1$ such that
$g=\sum_j\lz_j\az_j$, where the series converges in $(\bmoz)^\ast$, and $\sum_j|\lz_j|
\sim\|g\|_{B_{\oz,L}(\rn)}$; see also \cite[Theorem B]{cw}.
\end{rem}

\subsection{Several examples of operators satisfying Assumptions (a) and (b)}\label{s4.3}

\hskip\parindent Let us now give several examples of operators
satisfying Assumptions (a) and (b) of this paper, and
hence, all results of this paper are applicable to
these operators.

(i) A typical example of the operator $L$ satisfying
Assumption (a) with $\tz(L)=\fz$ is that the kernels
$\{p_t\}_{t\ge0}$ of $\{e^{-tL}\}_{t\ge 0}$ satisfy a Gaussian upper
bound, namely, there exists a positive constant $C$ such
that for all $x$, $y\in \rn$ and $t > 0$,
$$|p_t(x, y)|\le \frac{C}{t^{n/2}}e^{-\frac{|x-y|^2}{t}}.$$
Obviously, if $L$ is the
Laplacian operator $\Delta$, then the heat kernels satisfy  the Gaussian
upper bound.

Recall that the Neumann Laplacian $\Delta_N$, the
Dirichlet Laplacian $\Delta_D$ and the Dirichlet-Neumann
Laplacian $\Delta_{DN}$ on $\rn$ are extended from
the corresponding operators on half plains;
see \cite[Section 2.2]{ddsy} for details.
It was further shown in \cite{ddsy} that if $L$ is one
of $\Delta_N$, $\Delta_D$ and $\Delta_{DN}$,
then heat kernels of $\{e^{-tL}\}_{t\ge 0}$ satisfy a Gaussian upper
bound, which implies that these operators also
satisfy Assumption (a).

Notice that the operators $\Delta$, $\Delta_N$, $\Delta_D$ and
$\Delta_{DN}$ are self-adjoint operators; hence each of
them has a bounded $H_\fz$-calculus in $L^2(\rn)$ and, therefore,
satisfies Assumption (b); see \cite{m}.

(ii) Let $A\equiv\{a_{ij}(x)\}_{1\le i\le n, 1\le j\le n}$ be an
$n \times n$ matrix with entries $a_{ij}\in L^\fz(\rn, \cc)$
satisfying that for certain $\lz> 0$,
$\mathop\mathrm{Re}(\sum_{i,j=1}^n a_{ij}(x)\xi_i\xi_j)\geq \lz|\xi|^2$
for all $x \in \rn$ and $\xi=(\xi_1, \xi_2, \cdots, \xi_n)\in {\cc}^n$.
Define a divergence form operator
$$Lf\equiv -{\rm div}(A\nabla f),$$
which is  interpreted in the usual weak sense via
a sesquilinear form. Notice that the Gaussian bound of the heat
kernels of $\{e^{-tL}\}_{t>0}$ truly holds
when $A$ has real entries, or when $n = 1, 2$ in the
case of complex entries. Moreover, it is well known that
$L$ admits a bounded $H_\fz$-calculus in $L^2(\rn)$
(see, for example, \cite{at}). Thus, $L$ satisfies
Assumptions (a) and (b).

(iii) Let $V\in L^1_{\loc}(\rn)$ be a nonnegative function. The
Schr\"odinger operator with potential $V$ is defined by $L
\equiv\Delta + V$ on $\rn$, where $n\geq3$ and
$\Delta=-\sum^n_{j=1}(\frac {\partial}{\partial x_j})^2$
is the Laplace operator. From the
Trotter-Kato product formula, it follows that the kernels
$\{p_t\}_{t\ge0}$ of the semigroup $\{e^{-tL}\}_{t\ge0}$ satisfy the
estimate that for all $x$, $y\in \rn$ and $t > 0$,
\begin{equation*}
0\le p_t(x, y)\le \frac{1}{(4\pi t)^{n/2}}e^{-\frac{|x-y|^2}{4t}}.
\end{equation*}
From this, it easy to see that $L$ satisfies Assumptions (a) and (b).
However, unless $V$ satisfies
additional conditions, the heat kernel can be a discontinuous
function of the space variables and the H\"older continuous
estimates may fail to hold; see \cite{da}.

\medskip

{\bf Acknowledgements.} The authors would like
to thank the referee very much for his carefully reading and
several valuable remarks which made this article more readable.

\bigskip

\noindent Renjin Jiang and Dachun Yang (Corresponding author)

\medskip

\noindent School of Mathematical Sciences, Beijing Normal University,
Laboratory of Mathematics and Complex Systems, Ministry of Education,
Beijing 100875, People's Republic of China

\medskip

\noindent{\it E-mail addresses}: \texttt{rj-jiang@mail.bnu.edu.cn}

\hspace{2.5cm}\texttt{dcyang@bnu.edu.cn}

\end{document}